\documentclass[pdflatex,sn-mathphys-num]{sn-jnl}% Math and Physical Sciences Numbered Reference Style

\usepackage{color}
\usepackage{amsmath,latexsym,amssymb,amsfonts,mathtools}
\usepackage{algorithm,algpseudocode}
\usepackage{epsf,psfrag}
\usepackage{xspace}
\usepackage{bbm}
\usepackage{rotating}
\usepackage{multirow}
\usepackage{framed}
\usepackage{subfig}
\usepackage{setspace}
\usepackage{verbatim}
\usepackage{mathrsfs}  
\usepackage{url}
\usepackage{bm}

\usepackage{tikz,tikz-cd}
\usetikzlibrary{positioning,arrows,decorations.pathmorphing,backgrounds,fit,calc,matrix,shapes,shadows,trees}

\usepackage{enumitem}
\setlist[enumerate]{leftmargin=*}
\setlist[description]{leftmargin=*}

\theoremstyle{thmstyleone}%
\newtheorem{theorem}{Theorem}%  meant for continuous numbers
\newtheorem{proposition}[theorem]{Proposition}% 
\newtheorem{lemma}[theorem]{Lemma}% 
\newtheorem{corollary}[theorem]{Corollary}% 
\newtheorem{assumption}[theorem]{Assumption}% 
\newtheorem{condition}[theorem]{Condition}% 

\theoremstyle{thmstyletwo}%

\theoremstyle{thmstylethree}%

\newcommand{\real}{\mathbb{R}}

\newcommand{\cK}{\mathcal{K}}
\newcommand{\cL}{\mathcal{L}}

\newcommand{\cR}{\mathcal{R}}

\newcommand{\risk}{\cR}
\newcommand{\bbe}{\mathbb{E}}

\newcommand{\riskenv}{\mathfrak{A}}
\newcommand{\CVaR}{\operatorname{AVaR}}

\newcommand{\proj}{\textup{proj}}

\newcommand{\dom}[1]{\textup{dom}\,#1}

\newcommand{\prox}{\textup{prox}}

\newcommand{\ared}{\textup{ared}_k}
\newcommand{\pred}{\textup{pred}_k}
\newcommand{\cred}{\textup{cred}_k}

\begin{document}

\title[Inexact Nonsmooth Trust Regions]{An Inexact Trust-Region Method for Structured Nonsmooth Optimization with Application to Risk-Averse Stochastic Programming}
\author[1]{\fnm{Drew P.} \sur{Kouri}}\email{dpkouri@sandia.gov}
\affil[1]{\orgdiv{Optimization and Uncertainty Quantification}, \orgname{Sandia National Laboratories}, \orgaddress{\street{P.O.\ Box 5800}, \city{Albuquerque}, \postcode{87185-1320}, \state{NM}, \country{USA}}}

\abstract{
  We develop a trust-region method for efficiently minimizing the sum of a
  smooth function, a nonsmooth convex function, and the composition of a
  convex Lipschitz continuous function with a smooth function. Optimization
  problems with this structure arise in numerous applications including
  risk-averse stochastic programming and subproblems for nonsmooth penalty
  nonlinear programming methods.  Our algorithm permit the use of inexact value
  and derivative information, enabling the solution of infinite-dimensional
  problems governed by, e.g., partial differential equations (PDEs). We prove
  global convergence of our method and under additional regularity assumptions,
  demonstrate that the sequence of iterates accumulates at a stationary point
  of our target problem.  We demonstrate our method's efficiency on two
  PDE-constrained optimization examples, showing that its performance is
  invariant to the PDE discretization size.
}

\keywords{Trust Regions, Nonsmooth Optimization, Saddle Point Problems, Risk-Averse Optimization, Stochastic Optimization, Adaptivity}
\pacs[MSC Classification]{49M29, 49M37, 65K10, 90C15, 93E20}

\maketitle

\section{Introduction}

We develop an efficient trust-region method for solving structured nonsmooth,
nonconvex optimization problems with the form
\begin{equation}\label{eq:optprob}
  \min_{x\in X} \; \{J(x)\coloneqq f_0(x) + \phi_1(f_1(x)) + \phi_0(x)\},
\end{equation}
where $X$ and $Y$ are real Hilbert spaces, $f_0:X\to\real$ and $f_1:X\to Y$ are
smooth functions, $\phi_0:X\to(-\infty,+\infty]$ is proper, closed and convex,
and $\phi_1:Y\to\real$ is convex and Lipschitz continuous. This class
of problems arises in various applications including sparse estimation and
learning \cite{tibshirani1996regression}, nonlinear programming with
$L^1$-penalties \cite{JNocedal_SJWright_2006a}, and simulating mechanical
systems subject to Tresca friction \cite{capatina2014variational}.

In addition to these applications, a critically important application of
our method is risk-averse optimization, which has the form
\begin{equation}\label{eq:riskmin}
  \min_{x} \; f(x) + \risk(F(x)) + \phi(x).
\end{equation}
In the context of \eqref{eq:riskmin}, $f$ is a smooth deterministic function,
$F$ is a smooth stochastic function (i.e., $F(x)$ is a random variable for each
$x$), $\phi$ is a nonsmooth convex function, and $\risk$ is a coherent risk
measure \cite{PArtzner_FDelbaen_JMEber_DHeath_1999a}. Risk-averse optimization
problems arise in numerous applications ranging from financial mathematics to
reliability engineering
\cite{dentcheva2024risk,AShapiro_DDentcheva_ARuszczynski_2014a}.
Although ubiquitous, these problems are often challenging to solve
because of the nonsmoothness arising in $\risk$ and $\phi$.  In fact,
traditional nonsmooth optimization methods are often intractable when
\eqref{eq:riskmin} is nonconvex and $F$ is computationally expensive to
evaluate, as is the case when $F$ involves the solution of a system of partial
differential equations (PDEs) with uncertain coefficients
\cite{alexanderian2017mean,antil2023ttrisk,beiser2023adaptive,chen2023performance,geiersbach2020stochastic,DPKouri_MHeinkenschloss_DRidzal_BGvanBloemenWaanders_2013a,DPKouri_MHeinkenschloss_DRidzal_BGvanBloemenWaanders_2014a,DPKouri_AShapiro_2018a,DPKouri_TMSurowiec_2016a,DPKouri_TMSurowiec_2018a,luo2025efficient}.
Recently, \cite{kouri2022primal} introduced the specialized primal-dual risk
minimization algorithm.  At each iteration of the primal-dual risk minimization
algorithm, one approximately solves a smoothed risk-averse optimization
problem, which can be expensive.  Moreover, it is unclear how to leverage
inexact evaluations of $f$, $F$ and its derivatives during the primal-dual
iteration.  Similar approaches like the interior point method tailored to
the average value-at-risk\footnote{Also called the conditional value-at-risk,
expected shortfall and superquantile.} introduced in \cite{garreis2021interior}
suffer from the same challenges.

To address these shortcomings, we introduce a new trust-region method that can
efficiently solve \eqref{eq:optprob} even when $f_0$, $f_1$ and their
derivatives are computationally expensive to evaluate. In this setting, it is
critical that {\em (i)} our algorithm exhibits rapid convergence and {\em (ii)}
can leverage inexact evaluations of $f_0$, $f_1$ and their derivatives while
maintaining strong convergence guarantees. For example, in PDE-constrained
optimization \cite{MHinze_RPinnau_MUlbrich_SUlbrich_2009}, evaluating $f_1$
often requires the discretization and iterative numerical solution of a system
of nonlinear PDEs---a cost that is expounded when the PDE has uncertain or
random coefficients. To address the algorithmic requirements {\em (i)} and
{\em (ii)} above, we extend the inexact proximal trust-region algorithm
developed in \cite{baraldi2022proximal}. The method in
\cite{baraldi2022proximal} is applicable for minimizing $f_0+\phi_0$ and is
provably convergent even when $f_0$ and its derivative are computed inexactly.
Moreover, the method exhibits local superlinear---even quadratic---convergence
rates under mild assumptions \cite{baraldi2024local}. Roughly speaking, our
algorithm applies the trust-region algorithm in \cite{baraldi2022proximal} to
\eqref{eq:optprob} using a modified trust-region subproblem model to account
for the composition $\phi_1\circ f_1$. By exploiting the structure of
\eqref{eq:optprob}, we are able to extend the convergence theory from
\cite{baraldi2022proximal} to guarantee convergence of our algorithm.

The remainder of the paper is structured as follows.  In
Section~\ref{sec:prelim}, we discuss the problem formulation, assumptions and
optimality conditions for \eqref{eq:optprob}.  In Section~\ref{sec:algo},
we introduce our inexact trust-region algorithm and describe efficient
ways to solve the trust-region subproblem in
Sections~\ref{sec:subprob}~and~\ref{sec:cp}. We prove convergence of our method
in Section~\ref{sec:conv} and demonstrate its numerical performance in
Section~\ref{sec:num}.

\section{Preliminaries}\label{sec:prelim}

Let $X$ and $Y$ be real Hilbert spaces.  We denote the inner product and
associated norm on $X$ by $(\cdot,\cdot)_X$ and $\|\cdot\|_X$, respectively,
and analogously for $Y$.  We denote the Banach space of bounded linear
operators mapping $X$ into $Y$ by $\cL(X,Y)$ and the topological dual space of
$X$ (i.e., the space of continuous linear functionals on $X$) by
$X^*\coloneqq\cL(X,\real)$.  To simplify the presentation, we identify the
dual space $X^*$ with $X$ via the Riesz representation theorem and we
denote $\cL(X)\coloneqq\cL(X,X)$.  With this convention, the adjoint of a
linear operator $A\in\cL(X,Y)$, denoted by $A^*\in\cL(Y,X)$, satisfies
$(Ax,y)_Y=(x,A^*y)_X$ for all $x\in X$ and $y\in Y$. Recall that an extended
real-valued function
$\psi:X\to[-\infty,+\infty]$ is proper if $\psi(x)>-\infty$ for all $x\in X$
and there exists at least one $x_0\in X$ for which $\psi(x_0) < +\infty$. When
$\psi$ is convex, we denote its proximity operator by $\prox_\psi:X\to X$, i.e.,
\[
  \prox_\psi(x) \coloneqq \operatorname*{arg\,min}_{x'\in X} \{\tfrac{1}{2}\|x'-x\|_X^2 + \psi(x')\},
\]
its subdifferential by $\partial\psi:X\rightrightarrows X$, i.e.,
\[
  \partial\psi(x) \coloneqq \{v\in X\,\vert\, \psi(x') \ge \psi(x) + (v,x'-x)_X\;\;\forall\,x'\in X\},
\]
and its Fenchel conjugate by $\psi^*:X\to(-\infty,+\infty]$, i.e.,
\[
  \psi^*(v) \coloneqq \sup_{x\in X}\{(v,x)_X - \psi(x)\}.
\]
If $\psi$ is proper, closed and convex, then $\psi^{**}=\psi$
\cite[Proposition~4.1]{IEkeland_RTemam_1999}.
We denote the effective domains of $\psi$ and $\partial\psi$ by
\[
  \dom{\psi}\coloneqq \{x\in X\,\vert\, \psi(x) < +\infty\}
  \qquad\text{and}\qquad
  \dom{\partial\psi}\coloneqq \{x\in X\,\vert\, \partial\psi(x) \neq \emptyset\},
\]
respectively. Finally, for a nonempty, closed and convex subset $C\subseteq X$,
we denote the indicator and support functions of $C$ by $\delta_C$ and
$\sigma_C$, respectively, i.e.,
\[
  \delta_C(x) \coloneqq \left\{\begin{array}{ll}
    0 & \text{if $x\in C$} \\
    +\infty & \text{otherwise}
  \end{array}\right.
  \qquad\text{and}\qquad
  \sigma_C(x) \coloneqq \sup_{v\in C}\; (v,x)_X.
\]
Recall that $\sigma_C=\delta_C^*=\sigma_C^{**}$, the proximity operator of
$\delta_C$ is the metric projection onto $C$, which we denote by $\proj_C$, and
that the subdifferential of $\delta_C$ is the normal cone
\[
  N_C(x)\coloneqq\left\{\begin{array}{ll}
    \{\,v\in X\,\vert\,(v,x'-x)_X \le 0\;\;\forall\,x'\in C\} & \text{if $x\in C$} \\
    \emptyset & \text{otherwise.}
  \end{array}\right.
\]

We make the following assumptions on the problem data in \eqref{eq:optprob}.
\begin{assumption}\label{as:data}
  The problem data in \eqref{eq:optprob} satisfies the following conditions.
  \begin{enumerate}
  \item The function $\phi_0:X\to(-\infty,+\infty]$ is proper, closed and convex.
  \item The function $f_0:X\to\real$ is $M_{f_0}$-smooth on $\dom{\phi_0}$,
        i.e., there exists an open set $U$ containing $\dom{\phi_0}$ on
        which $f_0$ is Fr\'{e}chet differentiable and $\nabla f_0$ is Lipschitz
        continuous with modulus $M_{f_0}>0$.
  \item The function $\phi_1:Y\to\real$ is convex and Lipschitz continuous with
        modulus $M_{\phi_1}>0$.
  \item The function $f_1:X\to Y$ is $M_{f_1}$-smooth on $\dom{\phi_0}$,
        i.e., there exists an open set $V$ containing $\dom{\phi_0}$ on
        which $f_1$ is Fr\'{e}chet differentiable and $f_1'$ is Lipschitz
        continuous with modulus $M_{f_1}>0$.
  \item The objective function $J=f_0+\phi_1\circ f_1+\phi_0$ is bounded
        from below by $\kappa_{\text{lb}}\in\real$.
  \end{enumerate}
\end{assumption}

Under Assumption~\ref{as:data}, \cite[Proposition~2.2.1]{FHClarke_1983}
ensures that $f_0$ and $f_1$ are locally Lipschitz continuous.  Consequently,
this and the Lipschitz continuity of $\phi_1$ ensure that $\phi_1\circ f_1$ is
also locally Lipschitz continuous. Therefore,
\cite[Proposition~2.3.3]{FHClarke_1983} and
\cite[Theorem~2.3.10]{FHClarke_1983} yield
\[
  \partial_C (f_0+\phi_1\circ f_1)(x)
    = \nabla f_0(x) + f_1'(x)^*\partial\phi_1(f_1(x))
\]
for $x\in U\cap V$, where $\partial_C$ denotes the Clarke subdifferential.
It further follows from this, Corollary~1 of
\cite[Theorem~2.9.8]{FHClarke_1983} and \cite[Theorem~2.9.9]{FHClarke_1983},
that
\begin{equation}\label{eq:Jsubdiff}
  \partial_C J(x)
    = \nabla f_0(x) + f_1'(x)^*\partial\phi_1(f_1(x)) + \partial\phi_0(x)
\end{equation}
for $x\in\dom{\partial\phi}$.  Hence, \cite[Proposition~2.4.11]{FHClarke_1983}
provides a first-order necessary optimality condition for \eqref{eq:optprob}.
In particular, if $\bar{x}\in X$ is a local minimizer of \eqref{eq:optprob},
then $\bar{x}$ satisfies
\begin{equation}\label{eq:statpt0}
  \exists\,\bar{\theta}\in\partial\phi_1(f_1(\bar{x}))
  \quad\text{such that}\quad
  -(\nabla f_0(\bar{x}) + f_1'(\bar{x})^*\bar{\theta}) \in \partial\phi_0(\bar{x}),
\end{equation}
which can be equivalently rewritten as
\begin{equation}\label{eq:statpt}
  \exists\,\bar{\theta}\in\partial\phi_1(f_1(\bar{x}))
  \quad\text{such that}\quad
  \bar{x} = \prox_{t\phi_0}(\bar{x}-t(\nabla f_0(\bar{x}) + f_1'(\bar{x})^*\bar{\theta}))
\end{equation}
for arbitrary fixed $t>0$.  We will say that $\bar{x}\in X$ is a stationary
point of \eqref{eq:optprob} if it satisfies \eqref{eq:statpt}.

For the forthcoming analysis, it will be convenient to define the proper, closed
and convex function $\psi_{\bar{x}}:X\to(-\infty,+\infty]$ for fixed
$\bar{x}\in X$ defined by
\begin{equation}\label{eq:psi}
  \psi_{\bar{x}}(x)\coloneqq \phi_1(f_1'(\bar{x})(x-\bar{x})+f_1(\bar{x}))+\phi_0(x).
\end{equation}
Using $\psi_{\bar{x}}$, we arrive at the following alternative characterization
of stationary points for \eqref{eq:optprob}.
\begin{theorem}\label{T:statpt}
  Let $\bar x\in X$ and consider $\psi_{\bar x}$ defined in \eqref{eq:psi}.
  Then, $\bar x$ is a stationary point of \eqref{eq:optprob} (i.e.,
  \eqref{eq:statpt0} holds) if and only if
  \begin{equation}\label{eq:linprox}
    \bar{x}=\prox_{t\psi_{\bar{x}}}(\bar{x}-t\nabla f_0(\bar{x}))
  \end{equation}
  for arbitrary, fixed $t>0$.  In particular, if $\bar{x}$ is a stationary
  point of \eqref{eq:optprob}, then it is also a stationary point of the
  auxiliary optimization problem
  \begin{equation}\label{eq:lin}
    \min_{x\in X} f_0(x) + \psi_{\bar{x}}(x).
  \end{equation}
\end{theorem}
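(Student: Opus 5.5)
The plan is to reduce \eqref{eq:linprox} to a subdifferential inclusion for $\psi_{\bar x}$ and then compute $\partial\psi_{\bar x}(\bar x)$ with convex calculus.

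\emph{Step 1 (prox characterization).} Since $\psi_{\bar x}$ is proper, closed and convex, the optimality condition for the strongly convex prox problem gives $p=\prox_{t\psi_{\bar x}}(z)$ if and only if $z-p\in t\,\partial\psi_{\bar x}(p)$. Taking $z=\bar x-t\nabla f_0(\bar x)$ and $p=\bar x$, \eqref{eq:linprox} holds for a given $t>0$ if and only if
\[
  -\nabla f_0(\bar x)\in\partial\psi_{\bar x}(\bar x).
\]
This condition does not involve $t$, which settles the ``arbitrary, fixed $t>0$'' clause.

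\emph{Step 2 (computing the subdifferential).} Write $\psi_{\bar x}=g+\phi_0$ with $g(x)\coloneqq\phi_1(Ax+b)$, where $A\coloneqq f_1'(\bar x)\in\cL(X,Y)$ and $b\coloneqq f_1(\bar x)-A\bar x$.
\begin{itemize}
  \item Because $\phi_1$ is real-valued, convex and continuous on $Y$, the chain rule for a convex function composed with a continuous affine map gives $\partial g(x)=A^*\partial\phi_1(Ax+b)$ for every $x$. This uses the qualification condition that $\phi_1$ is continuous at some point of the range of the affine map, which holds trivially here.
  \item Since $g$ is finite and continuous on all of $X$, the Moreau--Rockafellar sum rule applies (continuity of one summand at a point of $\dom{\phi_0}\neq\emptyset$). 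It gives $\partial\psi_{\bar x}(x)=\partial g(x)+\partial\phi_0(x)$.
\end{itemize}
At $x=\bar x$ we have $A\bar x+b=f_1(\bar x)$, so
\[
  \partial\psi_{\bar x}(\bar x)=f_1'(\bar x)^*\partial\phi_1(f_1(\bar x))+\partial\phi_0(\bar x).
\]

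\emph{Step 3 (the equivalence).} The inclusion $-\nabla f_0(\bar x)\in\partial\psi_{\bar x}(\bar x)$ holds exactly when there exists $\bar\theta\in\partial\phi_1(f_1(\bar x))$ with $-(\nabla f_0(\bar x)+f_1'(\bar x)^*\bar\theta)\in\partial\phi_0(\bar x)$. This is \eqref{eq:statpt0}, and together with Step 1 it proves the equivalence.

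\emph{Step 4 (the auxiliary problem).} Problem \eqref{eq:lin} is an instance of \eqref{eq:optprob} with the same $f_0$, with $\phi_1$ replaced by $0$, and with $\phi_0$ replaced by $\psi_{\bar x}$. For that problem the stationarity condition \eqref{eq:statpt} reads $\bar x=\prox_{t\psi_{\bar x}}(\bar x-t\nabla f_0(\bar x))$, which is \eqref{eq:linprox}. Hence a stationary point of \eqref{eq:optprob} is also stationary for \eqref{eq:lin}.

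The only step needing care is Step 2, specifically justifying the chain and sum rules in infinite-dimensional Hilbert spaces. I would cite the standard results, e.g. Ekeland--Temam, Chapter I, Propositions 5.6--5.7, using that $\phi_1$ is finite and continuous everywhere and that the affine map is bounded. I would also note that the equivalence uses only convexity at $\bar x$; no smoothness beyond the existence of $f_1'(\bar x)$ and $\nabla f_0(\bar x)$ is needed.
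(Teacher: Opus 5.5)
Your proof is correct and follows essentially the same route as the paper: compute $\partial\psi_{\bar x}(\bar x)=f_1'(\bar x)^*\partial\phi_1(f_1(\bar x))+\partial\phi_0(\bar x)$, then use the resolvent characterization of $\prox_{t\psi_{\bar x}}$, which shows the condition does not depend on $t$. The differences are minor. You justify the subdifferential formula with the convex chain rule and the Moreau--Rockafellar sum rule rather than the Clarke calculus the paper reuses from \eqref{eq:Jsubdiff}; this is arguably the more direct justification, since the prox identity involves the convex subdifferential. You also get the final claim by viewing \eqref{eq:lin} as an instance of \eqref{eq:optprob} with $\phi_1\equiv 0$, whereas the paper substitutes $\bar x$ into the optimality condition of \eqref{eq:lin}.
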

\begin{proof}
  Leveraging the same arguments that produced \eqref{eq:Jsubdiff}, we obtain
  \[
    \partial\psi_{\bar x}(x) = f_1'(\bar x)^*\partial\phi_1(f_1'(\bar x)(x-\bar x)+f_1(\bar x)) + \partial\phi_0(x).
  \]
  From this follows the sequence of equivalences:
  \[
  \begin{aligned}
    \eqref{eq:statpt0} \qquad&\iff\qquad -\nabla f_0(\bar x) \in f_1'(\bar x)^*\partial\phi_1(f_1(\bar x))+\partial\phi_0(\bar x)=\partial\psi_{\bar x}(\bar x) \\
    \qquad&\iff\qquad \bar x - t\nabla f_0(\bar x) \in (\text{Id}+t\partial\psi_{\bar x})(\bar x) \quad\forall\, t>0 \\
    \qquad&\iff\qquad \eqref{eq:linprox}, \qquad\textup{cf.\ \cite[Example~23.3]{bauschke2017convex}}.
  \end{aligned}
  \]
  Here, $\text{Id}\in\cL(X)$ denotes the identity operator on $X$. To conclude,
  a stationary point $\bar{p}\in X$ for \eqref{eq:lin} satisfies
  \begin{equation}\label{eq:lin-optcond}
    -\nabla f_0(\bar{p}) \in \partial\psi_{\bar{x}}(\bar{p})
    = f_1'(\bar{x})^*\partial\phi_1(f_1'(\bar{x})(\bar{p}-\bar{x})+f_1(\bar{x})) + \partial\phi_0(\bar{p}).
  \end{equation}
  Since $\bar{x}$ is a stationary point of \eqref{eq:optprob}, it
  satisfies \eqref{eq:statpt0} and therefore, substituting $\bar{x}$ for
  $\bar{p}$ in \eqref{eq:lin-optcond}, we see that $\bar{x}$ is a stationary
  point of \eqref{eq:lin}.
\end{proof}

Our trust-region algorithm leverages nonsmooth functions of the form
\[
  \psi_k(x) = \phi_1(A_k(x-x_k)+b_k) + \phi_0(x),
\]
where $x_k$ is the $k$-th iterate, $A_k\in\mathcal{L}(X,Y)$
is an approximation of $f_1'(x_k)$ and $b_k\in Y$ is an approximation of
$f_1(x_k)$ To prove convergence, we must relate the proximity operators of
$\psi_k$ and $\psi_{\bar{x}}$, which we do in the subsequent technical lemma.

\begin{lemma}\label{L:proxDiff}
  Let $w_i:X\to Y$, $i=1,2$, be two affine maps defined by
  \[
    w_i(x) = D_i(x-u_i) + d_i, \quad i=1,2,
  \]
  where $D_i\in\mathcal{L}(X,Y)$, $u_i\in X$ and $d_i\in Y$, $i=1,2$.
  Moreover, define $\Psi_i:X\to(-\infty,+\infty]$ by
  \[
    \Psi_i(x) = \phi_0(x) + \phi_1(w_i(x)), \quad i=1,2.
  \]
  If $p_i=\prox_{t\Psi_i}(z)$, $i=1,2$, for fixed $t>0$ and $z\in X$, then
  \begin{equation}\label{eq:proxDiff-0}
  \begin{aligned}
    &\frac{\|p_1-p_2\|_X^2}{\max\{1,\|p_1-p_2\|_X\}} \\
    &\le 2 t M_{\phi_1}(\|D_1-D_2\|_{\cL(X,Y)}(1+2\|p_1-u_1\|_X)+2\|d_1-d_2-D_2(u_1-u_2)\|_Y).
  \end{aligned}
  \end{equation}
\end{lemma}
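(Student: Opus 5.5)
The plan is to exploit the $1$-strong convexity of the proximal objectives $x\mapsto\tfrac12\|x-z\|_X^2+t\Psi_i(x)$. Both are proper, closed and strongly convex, so $p_1$ and $p_2$ are well defined and lie in $\dom{\phi_0}$. Since $\phi_1$ is real-valued, $\Psi_1$ and $\Psi_2$ are both finite at $p_1$ and $p_2$. Comparing each prox objective at its own minimizer with its value at the other minimizer should bound $\|p_1-p_2\|_X^2$ by differences $\Psi_1-\Psi_2$. The Lipschitz continuity of $\phi_1$ (Assumption~\ref{as:data}) then reduces those differences to $\|w_1-w_2\|_Y$ evaluated at $p_1$ and $p_2$.

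\emph{Step 1 (strong convexity).} Because $p_1$ minimizes the $1$-strongly convex function $\tfrac12\|x-z\|_X^2+t\Psi_1(x)$, we have
\[
\tfrac12\|p_2-z\|_X^2+t\Psi_1(p_2)\ \ge\ \tfrac12\|p_1-z\|_X^2+t\Psi_1(p_1)+\tfrac12\|p_1-p_2\|_X^2 .
\]
The same holds with the indices $1$ and $2$ swapped. Adding the two inequalities cancels the quadratic terms in $z$ and the $\phi_0$ terms, leaving
\[
\|p_1-p_2\|_X^2\le t\big[\phi_1(w_1(p_2))-\phi_1(w_2(p_2))+\phi_1(w_2(p_1))-\phi_1(w_1(p_1))\big].
\]
The Lipschitz continuity of $\phi_1$ then bounds the right-hand side by
\[
tM_{\phi_1}\big(\|w_1(p_1)-w_2(p_1)\|_Y+\|w_1(p_2)-w_2(p_2)\|_Y\big).
\]

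\emph{Step 2 (affine difference).} A direct computation gives
\[
w_1(x)-w_2(x)=(D_1-D_2)(x-u_1)+\big(d_1-d_2-D_2(u_1-u_2)\big).
\]
Write $e\coloneqq\|d_1-d_2-D_2(u_1-u_2)\|_Y$. At $x=p_1$ the difference has norm at most $\|D_1-D_2\|\,\|p_1-u_1\|_X+e$. At $x=p_2$ I would use $\|p_2-u_1\|_X\le\|p_1-u_1\|_X+\|p_1-p_2\|_X$. Summing the two estimates yields
\[
\|p_1-p_2\|_X^2\le tM_{\phi_1}\big(\|D_1-D_2\|(2\|p_1-u_1\|_X+\|p_1-p_2\|_X)+2e\big).
\]

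\emph{Step 3 (normalization).} The only awkward term is the $\|p_1-p_2\|_X$ that appears on the right. Dividing both sides by $\max\{1,\|p_1-p_2\|_X\}$ handles it, since $\|p_1-p_2\|_X/\max\{1,\|p_1-p_2\|_X\}\le1$. All remaining terms on the right only decrease when divided by a quantity at least $1$. This gives \eqref{eq:proxDiff-0} even without the leading factor $2$, which is therefore harmless slack.

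I expect no real obstacle. The only care needed is to verify the finiteness of $\Psi_i$ at $p_1$ and $p_2$ before subtracting, and to handle the self-referential $\|p_1-p_2\|_X$ term via the $\max$ normalization.
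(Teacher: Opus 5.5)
Your proof is correct and follows essentially the same route as the paper. Both arguments use strong convexity of the proximal objectives to bound $\|p_1-p_2\|_X^2$ by differences of $\phi_1\circ w_1$ and $\phi_1\circ w_2$ at $p_1$ and $p_2$, bound these via $w_1-w_2=(D_1-D_2)(\cdot-u_1)+(d_1-d_2-D_2(u_1-u_2))$, and normalize by $\max\{1,\|p_1-p_2\|_X\}$. The only differences are cosmetic: you use strong convexity of both objectives, whereas the paper uses it for the second objective and only optimality of $p_1$ for the first (hence its extra factor $2$). You also apply the Lipschitz bound on $\phi_1$ directly rather than the subgradient/Fenchel--Young sandwich, and this yields the same intermediate estimate.
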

\begin{proof}
  Let $v_i:X\to(-\infty,+\infty]$ denote the objective function associated with
  the proximity operator of $\Psi_i$, i.e.,
  \[
    v_i(x) = \tfrac{1}{2t}\|x-z\|_X^2 + \Psi_i(x), \quad i=1,2,
  \]
  and notice that, for $x\in\dom{\phi_0}$,
  \[
    v_1(x)-v_2(x) = \phi_1(w_1(x))-\phi_1(w_2(x)).
  \]
  Owing to the strong convexity of $v_2$ as well as the optimality of
  $p_i$, $i=1,2$, we have that
  \begin{align}
    \tfrac{1}{2t}\|p_2-p_1\|_X^2 &\le v_2(p_1)-v_2(p_2) \nonumber \\
      &\le [(v_1(p_2)-v_2(p_2))-(v_1(p_1)-v_2(p_1))] + (v_1(p_1)-v_1(p_2)) \nonumber \\
      &\le [(v_1(p_2)-v_2(p_2))-(v_1(p_1)-v_2(p_1))].
      \label{eq:proxDiff-1}
  \end{align}
  It follows from the definition of the Fenchel conjugate of $\phi_1$ and the
  Fenchel-Young inequality that
  \[
    (\theta_2(x),w_1(x)-w_2(x))_Y \le v_1(x)-v_2(x) \le (\theta_1(x),w_1(x)-w_2(x))_Y
  \]
  for any $x\in X$ and $\theta_i(x)\in\partial\phi_1(w_i(x))$,
  $i=1,2$. Consequently, we have that
  \begin{subequations}
  \begin{align}
    [(v_1(x)-v_2(x))-&(v_1(y)-v_2(y))] \label{eq:proxDiff-2a} \\
    \le&\, (\theta_1(x),w_1(x)-w_2(x))_Y - (\theta_2(y),w_1(y)-w_2(y))_Y \nonumber \\
      =&\, (\theta_1(x),(w_1(x)-w_2(x))-(w_1(y)-w_2(y)))_Y \label{eq:proxDiff-2b} \\
       &+ (\theta_1(x)-\theta_2(y),w_1(y)-w_2(y))_Y \label{eq:proxDiff-2c}
  \end{align}
  \end{subequations}
  for any $x,\,y\in\dom{\phi_0}$. To bound \eqref{eq:proxDiff-2b}, the
  Lipschitz continuity of $\phi_1$ and the definitions of $w_i$ yield
  \[
    |(\theta_1(x),(w_1(x)-w_2(x))-(w_1(y)-w_2(y)))_Y| \le M_{\phi_1}\|D_1-D_2\|_{\cL(X,Y)}\|x-y\|_X.
  \]
  Here, recall that $\|\theta\|_Y\le M_{\phi_1}$ for all
	$\theta\in\partial\phi_1(y)$ for $y\in Y$ \cite[Proposition~2.1.2(a)]{FHClarke_1998a}.  Now, to bound
  \eqref{eq:proxDiff-2c}, we have that
  \[
    w_1(y)-w_2(y) = (D_1-D_2)(y-u_1)+(d_1-d_2-D_2(u_1-u_2)),
  \]
  and so
  \[
    \begin{aligned}
    |(\theta_1(x)&-\theta_2(y),w_1(y)-w_2(y))_Y| \\
    &\le 2M_{\phi_1}(\|D_1-D_2\|_{\cL(X,Y)}\|y-u_1\|_X + \|d_1-d_2-D_2(u_1-u_2)\|_Y).
    \end{aligned}
  \]
  Using these estimates, we can bound \eqref{eq:proxDiff-2a} by
  \begin{align}
    |&(v_1(x)-v_2(x))-(v_1(y)-v_2(y))| \nonumber \\
     &\le M_{\phi_1}(\|D_1-D_2\|_{\cL(X,Y)}(\|x-y\|_X+2\|y-u_1\|_X)+2\|d_1-d_2-D_2(u_1-u_2)\|_Y).
     \label{eq:proxDiff-3}
  \end{align}
  Combining \eqref{eq:proxDiff-1} with \eqref{eq:proxDiff-3}, we achieve the
  bound
  \[
    \tfrac{1}{2t}\|p_2-p_1\|_X^2
    \le M_{\phi_1}(\|D_1-D_2\|_{\cL(X,Y)}(\|p_2-p_1\|_X+2\|p_1-u_1\|_X)+2\|d_1-d_2-D_2(u_1-u_2)\|_Y).
  \]
  Applying H\"older's inequality to the right-hand side yields the bound
  \eqref{eq:proxDiff-0}.
\end{proof}

\section{Algorithm}\label{sec:algo}
We now present an iterative method for solving \eqref{eq:optprob}.  At each
iteration of our method, we minimize a local model of the objective function
$J$ in \eqref{eq:optprob} within a ball of radius $\Delta_k>0$ called the
{\em trust region}.  To construct this subproblem, we employ a quadratic
approximation of the Lagrangian functional
\[
  L(x,\theta) \coloneqq f_0(x) + (\theta, f_1(x))_Y.
\]
Notice that $L$ is continuously differentiable and that
$x\mapsto L(x,\theta)$ has Lipschitz continuous gradient.  We approximate
$L$ around the current iterate $x_k$ by
\[
  L_k(x,\theta) \coloneqq q_k(x) + (\theta,\ell_k(x))_Y
\]
and
\[
  \begin{aligned}
    q_k(x) &\coloneqq \frac{1}{2} (B_k(x-x_k), x-x_k)_X + (g_k,x-x_k)_X \\
    \ell_k(x) &\coloneqq A_k(x-x_k) + b_k.
  \end{aligned}
\]
Here, $A_k\approx f_1'(x_k)$, $b_k\approx f_1(x_k)$,
$g_k\approx\nabla f_0(x_k)$, and $B_k\in\mathcal{L}(X)$ encapsulates the
curvature of $L(\cdot,\theta_k)$ at $x_k$ for some $\theta_k\in Y$.  Using
$L_k$, we approximate the objective function $J$ by the local model
\begin{equation}\label{eq:quadmod_L}
  m_k(x)\coloneqq \sup_{\theta\in\riskenv} \{L_k(x,\theta) - \phi_1^*(\theta)\} + \phi_0(x)
                = q_k(x) + \psi_k(x),
\end{equation}
where
\[
  \psi_k(x) \coloneqq \phi_1(\ell_k(x))+\phi_0(x).
\]
Note that the function $\psi_k$ involves the sum of two potentially nonsmooth,
yet convex, terms $\phi_1\circ\ell_k$ and $\phi_0$.  To compute trial
iterates $x_k^+$, we approximately solve the trust-region subproblems
\begin{equation}\label{eq:tr-sub}
  \min_{x\in X} \; m_k(x)
  \quad\text{subject to}\quad
  \|x-x_k\|_X \le \Delta_k,
\end{equation}
where $\Delta_k>0$ is the current trust-region radius.

To ensure convergence, we require that for each $k$ the trial iterate $x_k^+$
satisfies
\begin{subequations}\label{eq:trial}
\begin{align}
  \|x_k^+-x_k\|_X &\le \kappa_{\rm rad}\Delta_k \label{eq:trcon} \\
  m_k(x_k) - m_k(x_k^+) &\ge \kappa_{\rm fcd} h_k \min\left\{\frac{h_k}{1+\|B_k\|_{\mathcal{L}(X)}},\Delta_k\right\}, \label{eq:fcd}
\end{align}
\end{subequations}
where $\kappa_{\rm rad}>0$ and $\kappa_{\rm fcd}>0$ are independent of $k$ and
$h_k$ is the stationarity metric
\begin{equation}\label{eq:hk}
  h_k \coloneqq \frac{1}{t_k}\|\prox_{t_k\psi_k}(x_k-t_k g_k) - x_k\|_X, \quad t_k > 0.
\end{equation}
Note that \eqref{eq:fcd} ensures that $x_k^+\in\dom{\phi_0}$ since otherwise the
left-hand side would be $-\infty$.  The positive parameter $t_k > 0$ in
\eqref{eq:hk} is chosen as the step size for the Cauchy point discussed in
Section~\ref{sec:cp}. Once $x_k^+$ is computed, we decide whether or not
to accept or reject $x_k^+$ based on the ratio of actual and predicted
reduction:
\[
  \rho_k^* \coloneqq \frac{\ared}{\pred},
\]
where
\[
  \ared \coloneqq J(x_k)-J(x_k^+) \qquad\text{and}\qquad
  \pred \coloneqq m_k(x_k)-m_k(x_k^+).
\]
In particular, if $\rho_k^*\ge\eta_1$, then $x_{k+1}=x_k^+$ and otherwise
$x_{k+1}=x_k$.  Moreover, we decrease the trust-region radius when
$\rho_k^*<\eta_1$ and increase it when $\rho_k^*>\eta_2$.  Here,
$0<\eta_1<\eta_2<1$ are user-specified parameters, which we set to
$\eta_1=10^{-4}$ and $\eta_2=0.5$ in our numerical results.

In many applications, the values and derivatives of $f_0$ and $f_1$ cannot be
numerically computed exactly.  For optimization problems governed by systems
of PDEs, $f_0$ and $f_1$ may require the discretization and iterative solution
of the underlying PDEs \cite{MHinze_RPinnau_MUlbrich_SUlbrich_2009}.
Similarly, in risk-averse stochastic programming, evaluating the risk measure
requires approximation, which can be done in some cases using quadrature
\cite{chen2015scenario,heinkenschloss2025optimization,DPKouri_MHeinkenschloss_DRidzal_BGvanBloemenWaanders_2013a,DPKouri_MHeinkenschloss_DRidzal_BGvanBloemenWaanders_2014a}.
To leverage this inexactness, we enforce the following conditions on $g_k$,
$b_k$ and $A_k$.
\begin{condition}\label{as:inexactGrad}
  There exist positive constants $\kappa_{\rm grad}$, $\kappa_{\rm val}$
  and $\kappa_{\rm jac}$, independent of $k$, such that
  \begin{subequations}
  \begin{align}
    \|g_k - \nabla f_0(x_k)\|_X &\le \kappa_{\rm grad}\min\{h_k,\Delta_k\} \label{eq:inexactGrad1} \\
    \|b_k - f_1(x_k)\|_Y &\le \kappa_{\rm val}\min\{h_k,\Delta_k^2\} \label{eq:inexactGrad2} \\
    \|A_k - f_1'(x_k)\|_{\cL(X,Y)} &\le \kappa_{\rm jac}\min\{h_k,\Delta_k\}. \label{eq:inexactGrad3}
  \end{align}
  \end{subequations}
\end{condition}
In these settings, the objective function value $J$, and thus $\ared$, cannot
be computed exactly. Instead, we replace $\ared$ with the {\em computed reduction}
$\cred$.  We enforce the following condition on the approximation $\cred$.
\begin{condition}\label{as:inexactObj}
  There exists a positive constant $\kappa_{\rm obj}$, independent of $k$, such that
  \begin{equation}
    |\ared-\cred|\le\kappa_{\rm obj}[\eta\min\{\pred,\zeta_k\}]^\zeta,
  \end{equation}
  where $\zeta$, $\eta$, and $\zeta_k$ are (user-specified) positive real numbers
  that satisfy
  \[
    \zeta > 1, \qquad 0 < \eta < \min\{\eta_1,1-\eta_2\},\qquad\text{and}\qquad
    \lim_{k\to\infty}\zeta_k = 0.
  \]
\end{condition}
Using $\cred$, we replace the ratio of actual and predicted reduction
$\rho_k^*$ with
\[
  \rho_k \coloneqq \frac{\cred}{\pred}.
\]
As shown in \cite[Lemma~A.1]{DPKouri_MHeinkenschloss_DRidzal_BGvanBloemenWaanders_2014a},
Condition~\ref{as:inexactObj} ensures that
\begin{equation}\label{eq:K-eta}
  \exists\,K_\eta\in\mathbb{N}\qquad\text{such that}\qquad |\rho_k^*-\rho_k| \le \eta \qquad\forall\,k\ge K_\eta.
\end{equation}
We list the trust-region method in Algorithm~\ref{alg:tr} and in the
forthcoming subsections, we discuss methods to efficiently compute the
trial iterate $x_k^+$ in line~3 of Algorithm~\ref{alg:tr}.

\begin{algorithm}[!ht]
\caption{Nonsmooth Trust-Region Algorithm}
  \label{alg:tr}
  \begin{algorithmic}[1]
    \Require{Initial guess $x_1\in\dom{\phi_0}$, initial radius
    $\Delta_1>0$, $0<\eta_1<\eta_2<1$, and $0<\gamma_1\le\gamma_2<1\le \gamma_3$}
    \For{$k=1,2,\ldots$}
      \State{{\bf Model Selection:} Choose $A_k$, $b_k$, and $g_k$ that satisfy
            Condition~\ref{as:inexactGrad}}
      \State{{\bf Step Computation:} Compute $x_k^+\in X$ that satisfies
            \eqref{eq:trial}}
      \State{{\bf Step Acceptance and Radius Update:} Compute the ratio of
	    computed and predicted reduction $\rho_k$ with $\cred$ satisfying
            Condition~\ref{as:inexactObj}}
      \If{$\rho_k < \eta_1$}
        \State{Set $x_{k+1} \gets x_k$}
        \State{Choose $\Delta_{k+1}\in[\gamma_1\Delta_k,\gamma_2\Delta_k]$}
      \Else
        \State{Set $x_{k+1}\gets x_k^+$}
        \If{$\rho_k\in[\eta_1,\eta_2)$}
          \State{Choose $\Delta_{k+1}\in[\gamma_2\Delta_k,\Delta_k]$}
	\Else
	  \State{Choose $\Delta_{k+1}\in[\Delta_k,\gamma_3\Delta_k]$}
	\EndIf
      \EndIf
    \EndFor
  \end{algorithmic}
\end{algorithm}

\subsection{Trial Iterate Computation}\label{sec:subprob}
To compute a trial step $x_k^+$, we first compute a benchmark $x_k^c$ called
the Cauchy point, which we describe in detail in the next subsection.
Importantly, the Cauchy
point satisfies \eqref{eq:trial}.  We then improve upon the Cauchy point ---
maintaining the satisfaction of \eqref{eq:trial} --- using a descent method
such as the methods introduced in \cite{baraldi2024efficient}, specifically the
dogleg, spectral proximal gradient (SPG), or truncated conjugate gradient (CG)
algorithms. For our numerical results, we employ the truncated CG method
\cite[Algorithm~4]{baraldi2024efficient}. This method first computes the Cauchy
point $x_k^c$ and then improves upon it using modified nonlinear CG iterations.
Each iteration of truncated CG, as well as the SPG methods in
\cite{baraldi2022proximal,baraldi2024efficient}, requires a single application
of the model Hessian $B_k$ to a vector.  Additionally, these methods are
terminated if either {\em (i)} the norm of the proximal gradient falls below
a relative stopping tolerance, {\em (ii)} the maximum number of iterations is
reached, or {\em (iii)} the trust-region radius is exceeded.

The efficiency, and likely the local convergence rate, of Algorithm~\ref{alg:tr}
depends on the choice of the model Hessian $B_k$, which can be selected in a
variety of ways.  Basing the choice of $B_k$ on the Hessian of
$L(x_k,\theta_k)$ permits the inclusion of curvature information from $f_1(x_k)$,
which would otherwise be neglected. For example, when
both $f_0$ and $f_1$ are twice differentiable, we can choose
$B_k=\nabla^2 L(x_k,\theta_k)$, assuming the applications of the Hessians of
$f_0$ and $f_1$ are computable.  Here, the choice
$\theta_k\in\partial\phi_1(f_1(x_k))$ can provide additional sensitivity information
from $\phi_1$ even when it is nonsmooth. For many inexact and adaptive
approximations of $f_0$ and $f_1$, we often not only approximate their values and
derivatives at $x_k$, but rather produce surrogate models of $f_0$ and $f_1$,
which we can use to compute $B_k$.  This is the case in PDE-constrained
optimization, when $f_0$ and $f_1$ are approximated using adaptive discretizations
or reduced-order models, and is the choice we make in our numerical experiments.
When Hessians are unavailable, we can approximate them using secant methods as
is commonly done in nonlinear programming software
\cite[Section~2.9]{gill2005snopt}.  Suppose $x_{k+1}=x_k^+$ and let
$s_k=x_{k+1}-x_k$, $u_k=\nabla L(x_{k+1},\theta_{k+1})-\nabla L(x_k,\theta_{k+1})$
and $p_k=B_ks_k$, then $B_k$ can be updated using, e.g., the BFGS update,
which is defined by its application to a vector $v\in X$:
\[
  B_{k+1}v = B_kv + \frac{(u_k,v)_X}{(u_k,s_k)_X}u_k - \frac{(p_k,v)_X}{(p_k,s_k)_X}p_k.
\]
Here, $\theta_{k+1}\in Y$ is an approximation of the dual variables
corresponding to $\phi_1(f_1(x_{k+1}))$.  Reasonable choices for $\theta_{k+1}$
include $\theta_{k+1}\in\partial\phi_1(f_1(x_{k+1}))$ and the dual variables
associated with the Cauchy point, which are computed in
Algorithm~\ref{alg:prox-phik}.  We describe this procedure next.

\subsection{Cauchy Point}\label{sec:cp}

In traditional trust-region methods, the trial iterate $x_k^+$ is selected
to produce at least a fraction of the model decrease achieved by
the Cauchy point.  For smooth unconstrained optimization, the
Cauchy point is chosen as a point in the negative gradient direction that
produces {\em sufficient} decrease of the (usually quadratic) model
\cite{ARConn_NIMGould_PhLToint_2000a}, while for convex-constrained
trust-region methods, the Cauchy point is a point along the projected gradient
path \cite{lin1999newton}. For this work, we define the Cauchy point for
\eqref{eq:tr-sub} as
\begin{equation}\label{eq:cp}
  x_k^c \coloneqq x_k + \alpha_k s_k(t_k)
  \qquad\text{with}\qquad
  s_k(t) \coloneqq \prox_{t\psi_k}(x_k - tg_k)-x_k,
\end{equation}
where $\alpha_k\in(0,1]$ and $t_k>0$ are selected to satisfy \eqref{eq:trial}.
When minimizing $f_0+\phi_0$, two basic approaches for selecting $\alpha_k$ and
$t_k$ are presented in \cite{baraldi2022proximal,baraldi2024efficient}.  These
approaches apply, without modification, to \eqref{eq:tr-sub}.  The approach
described in \cite[Algorithm~2]{baraldi2022proximal} sets $\alpha_k=1$ and
selects the step length $t_k$ to satisfy certain sufficient decrease conditions
using a bidirectional proximal search.  In contrast, the method described in
\cite{baraldi2024efficient} first computes $t_k$ as a safeguarded spectral
(also called Barzilai-Borwein) step length and then selects $\alpha_k$
to minimize a quadratic upper bound of the model. In our numerical experiments,
we employ the approach described in \cite{baraldi2024efficient}.

In order to compute $x_k^c$, we must evaluate the proximity operator
of $\psi_k$.  However, $\psi_k$ involves the sum of two convex functions, one
of which is composed with an affine map.  As a result, $\psi_k$ is
unlikely to admit an analytical proximity operator.  However, we are able to
exploit the structure of $\psi_k$ if the proximity operators of $\phi_0$ and
$\phi_1^*$ are available.  In this case, we can approximately compute the
proximity operator of $\psi_k$ iteratively by applying, e.g., projected
gradient ascent to the dual problem. To this end, consider the optimization problem for
evaluating $\prox_{r\psi_k}(z)$ with $z\in X$ and $r>0$, i.e.,
\begin{equation}\label{eq:primalprox}
  \min_{x\in X} \{\tfrac{1}{2r} \|x-z\|_X^2 + \phi_0(x) + \phi_1(A_k(x-x_k)+b_k)\}.
\end{equation}
The dual problem associated with \eqref{eq:primalprox} is given by
\begin{align}
  \max_{\theta\in Y} \min_{x\in X} \{\tfrac{1}{2r}\|x-z\|_X^2 + \phi_0(x) + (\theta,A_k(x-x_k)+b_k)_Y - \phi_1^*(\theta)\}.\label{eq:dualprox}
\end{align}
Notice that Sion's minimax theorem \cite{MSion_1958} ensures that there is no
duality gap between \eqref{eq:primalprox} and \eqref{eq:dualprox}.  Moreover,
the inner minimization problem in \eqref{eq:dualprox} has the unique minimizer
$\bar{p}(\theta)$, which satisfies the optimality conditions
\[
  \begin{aligned}
  0 \in \tfrac{1}{r}(\bar{p}(\theta)-z) + A_k^*\theta + \partial\phi_0(\bar{p}(\theta))
  &\quad\iff\quad
  (z-rA_k^*\theta) \in \bar{p}(\theta)+r\partial\phi_0(\bar{p}(\theta)) \\
  &\quad\iff\quad
  \bar{p}(\theta) = \prox_{r\phi_0}(z-rA_k^*\theta).
  \end{aligned}
\]
Substituting $\bar{p}(\theta)$ into \eqref{eq:dualprox} yields the dual
objective function $v(\theta)\coloneqq d(\theta)-\phi_1^*(\theta)$, where
\[
  d(\theta) \coloneqq
    \tfrac{1}{2r}\|\bar{p}(\theta)-z\|_X^2 + \phi_0(\bar{p}(\theta)) + (\theta,A_k(\bar{p}(\theta)-x_k)+b_k)_Y.
\]
Since $d$ is the value function of an affinely perturbed minimization problem,
it is concave.  In addition, $d$ can be rewritten as
\[
  d(\theta) = \tfrac{1}{2r}\|\bar{p}(\theta)-(z-rA_k^*\theta)\|_X^2 + \phi_0(\bar{p}(\theta)) + (\theta,A_k(z-x_k)+b_k)_Y - \tfrac{r}{2}\|A_k^*\theta\|_Y^2,
\]
where the first two terms constitute the Moreau envelope of $\phi_0$
evaluated at $z-rA_k^*\theta$.  Consequently, $d$ is differentiable
with Lipschitz continuous gradient given by
\[
  \nabla d(\theta) = A_k(\bar{p}(\theta)-x_k)+b_k
  = \ell_k(\bar{p}(\theta))
\]
and Lipschitz modulus $r\|A_k\|_{\cL(X,Y)}^2$
\cite[Proposition~12.30]{bauschke2017convex}.  One can maximize
$d-\phi_1^*$ using, e.g., the spectral proximal gradient method that we list in
Appendix~\ref{ap:spg} (see also \cite{birgin2000nonmonotone}).  Algorithm~\ref{alg:prox-phik} lists the full
algorithm for evaluating $\prox_{r\psi_k}(z)$ for $z\in X$.   We note that
each iteration requires a single application of the Jacobian $A_k$ and its
adjoint $A_k^*$.
\begin{algorithm}[!ht]
\caption{$\psi_k$ Proximity Operator Computation}
  \label{alg:prox-phik}
  \begin{algorithmic}[1]
    \Require{Proximity operator arguments $z\in X$ and $r>0$, initial guess
      $\theta^{(1)}\in\dom{\phi_1^*}$,
      safeguards $0<\gamma_{\min}<\gamma_{\max}<+\infty$ and $a_{\min}\in(0,1]$,
      initial step lengths $\gamma^{(1)}\in[\gamma_{\min},\gamma_{\max}]$ and
      $\lambda_0\in(0,1]$, and $0<\sigma_1<\sigma_2<1$, and positive parameters
      $\alpha\in(0,1)$ and $\beta\in[\sigma_1,\sigma_2]$}
    \For{$n=1,2,\ldots$}
      \State{$p^{(n)}\gets\prox_{r\phi_0}(z-rA_k^*\theta^{(n)})$}
      \State{$\mu^{(n)}\gets \ell_k(p^{(n)})$}
      \If{$n>1$}
        \State{$\gamma^{(n)}\gets\min\left\{\gamma_{\max},\max\left\{\gamma_{\min},\displaystyle{\frac{\lambda^{(n-1)}\|s^{(n-1)}\|_Y^2}{(\mu^{(n-1)}-\mu^{(n)},s^{(n-1)})_Y}}\right\}\right\}$}
      \EndIf
      \State{$s^{(n)}\gets\prox_{\gamma^{(n)}\phi_1^*}(\theta^{(n)}+\gamma^{(n)}\mu^{(n)})-\theta^{(n)}$}
      \State{$\lambda^{(n)}\gets\lambda_0$}
      \State{$\mathcal{Q}^{(n)}\gets(\mu^{(n)},s^{(n)})_Y+\phi_1^*(\theta^{(n)})-\phi_1^*(\theta^{(n)}+s^{(n)})$}
      \While{$v(\theta^{(n)}+\lambda^{(n)} s^{(n)}) < V^{(n)} + \alpha\lambda^{(n)}\mathcal{Q}^{(n)}$}
        \State{$\delta\gets-\displaystyle{\frac{[\lambda^{(n)}(\mu^{(n)},s^{(n)})_Y+\phi_1^*(\theta^{(n)})-\phi_1^*(\theta^{(n)}+\lambda^{(n)}s^{(n)})]}{2[d(\theta^{(n)}+\lambda^{(n)} s^{(n)})-d(\theta^{(n)})-\lambda^{(n)}(\mu^{(n)},s^{(n)})_Y]}}$}
        \If{$\delta\in[\sigma_1,\sigma_2]$}
          \State{$\lambda^{(n)}\gets\delta\lambda^{(n)}$}
        \Else
          \State{$\lambda^{(n)}\gets\beta\lambda^{(n)}$}
        \EndIf
      \EndWhile
      \State{$\theta^{(n+1)}\gets\theta^{(n)} + \lambda^{(n)}s^{(n)}$}
      \State{$V^{(n+1)}\gets(1-a^{(n)})V^{(n)}+a^{(n)}v(\theta^{(n+1)})$ for $a^{(n)}\in[a_{\min},1]$}
    \EndFor
  \end{algorithmic}
\end{algorithm}
The following proposition summarizes convergence results for
Algorithm~\ref{alg:prox-phik}.
\begin{proposition}
  Let $\{(p^{(n)},\theta^{(n)})\}$ denote the sequence of primal and dual
  iterates generated by Algorithm~\ref{alg:prox-phik}, then
  $\theta^{(n)}\rightharpoonup\bar\theta$, where $\bar\theta\in Y$ is a
  solution to the dual problem \eqref{eq:dualprox}. In addition, suppose one
  of the following conditions holds:
  \begin{enumerate}[label=(\alph*)]
  \item $A_k$ is a compact operator;
  \item $Y$ is finite dimensional;
  \item There exists
  $\varepsilon\in(0,\min\{1,(r\|A_k\|_{\cL(X,Y)}^2)^{-1}\})$ for
  which
  \[
    \varepsilon\le\gamma^{(n)}\le \frac{2}{r\|A_k\|_{\cL(X,Y)}^2}-\varepsilon
    \qquad\text{and}\qquad
    \varepsilon\le\lambda^{(n)}\le 1.
  \]
  \end{enumerate}
  Then, $p^{(n)}\to\prox_{r\psi_k}(z)=\prox_{r\phi_0}(z-rA_k^*\bar\theta)$.
\end{proposition}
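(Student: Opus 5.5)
Algorithm~\ref{alg:prox-phik} is a nonmonotone spectral proximal gradient method applied to the concave maximization of $v=d-\phi_1^*$. I would therefore split the argument into two parts. First I invoke a convergence result for that method to get $\theta^{(n)}\rightharpoonup\bar\theta$. Then I transfer dual convergence to primal convergence using the strong convexity of the primal objective in \eqref{eq:primalprox}.

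For the dual part, I would first check the ingredients. Here $-d$ is convex with $L$-Lipschitz gradient, $L=r\|A_k\|_{\cL(X,Y)}^2$, by the Moreau envelope representation above. Also $\phi_1^*$ is proper, closed and convex, and its domain is bounded: Lipschitz continuity of $\phi_1$ gives $\dom{\phi_1^*}\subseteq\{\|\theta\|_Y\le M_{\phi_1}\}$. Hence the dual problem \eqref{eq:dualprox} has a solution, since $v$ is weakly upper semicontinuous on a bounded closed convex set. The iterates also stay bounded.

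With the safeguards $\gamma^{(n)}\in[\gamma_{\min},\gamma_{\max}]$, the nonmonotone Armijo condition with averaging $a^{(n)}\ge a_{\min}$, and the lower bound on $\lambda^{(n)}$ that follows from $L$-smoothness of $d$, the standard analysis gives $\|s^{(n)}\|_Y\to0$. I would cite \cite{birgin2000nonmonotone} and Appendix~\ref{ap:spg} for this. Consequently every weak cluster point is a fixed point of the forward--backward map, i.e.\ a dual solution, because the graph of $\partial\phi_1^*$ is weak--strong closed and $\nabla d$ is continuous.

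Uniqueness of the weak limit is the main obstacle. A line-searched, nonmonotone scheme is not obviously Fej\'er monotone. I would first try an Opial-type argument: show that $\|\theta^{(n)}-\theta^\star\|_Y$ converges for every dual solution $\theta^\star$, up to summable errors controlled by $\sum\|s^{(n)}\|^2<\infty$. Under (c) this is the standard Krasnosel'ski\u{\i}--Mann/forward--backward Fej\'er argument with relaxation $\lambda^{(n)}\in[\varepsilon,1]$. Otherwise I would fall back on the result in the appendix as stated.

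For the primal part, set $\bar p=\prox_{r\phi_0}(z-rA_k^*\bar\theta)$. By no duality gap (Sion) and the saddle-point conditions, $\bar p$ minimizes \eqref{eq:primalprox}, so $\bar p=\prox_{r\psi_k}(z)$. Moreover $\prox_{r\phi_0}$ is nonexpansive, so
\[
\|p^{(n)}-\bar p\|_X\le r\|A_k^*(\theta^{(n)}-\bar\theta)\|_X .
\]
The remaining task is to show that the right-hand side tends to zero.

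In case (a), $A_k^*$ is compact and therefore maps weakly convergent sequences to strongly convergent ones. In case (b), weak and strong convergence coincide in $Y$. In case (c), I would use the strong convexity of the primal problem. The gap function $\frac1{2r}\|p^{(n)}-\bar p\|^2\le$ (primal value at $p^{(n)}$ minus optimum) is not directly available because $p^{(n)}$ may not be primal feasible in a useful sense. Instead, I would use the firm nonexpansiveness estimate from forward--backward theory:
\[
L\,\|A_k^*(\theta^{(n)}-\bar\theta)\|^2\le(\nabla d(\bar\theta)-\nabla d(\theta^{(n)}),\theta^{(n)}-\theta^\star)\cdot\text{const}.
\]
I would show this quantity is summable under the step bounds in (c), which yields $A_k^*\theta^{(n)}\to A_k^*\bar\theta$ strongly. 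This is the standard strong convergence of gradients in forward--backward splitting, e.g.\ \cite[Ch.~28]{bauschke2017convex}.
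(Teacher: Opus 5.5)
Your handling of the weak convergence (deferring to the appendix theorem), of $\bar p=\prox_{r\psi_k}(z)$, and of cases (a) and (b) matches the paper.

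One minor repair is needed in your cluster-point sketch. Weak--strong closedness of the graph of $\partial\phi_1^*$ plus continuity of $\nabla d$ does not suffice, because $\nabla d(\theta^{(n)})$ need not converge strongly when $\theta^{(n)}$ converges only weakly. You must instead apply closedness to the maximally monotone operator $\partial(\phi_1^*-d)$. Its residual $-s^{(n)}/\gamma^{(n)}+\nabla d(\theta^{(n)})-\nabla d(\theta^{(n)}+s^{(n)})$ does tend to zero strongly, and this is what the appendix uses.

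The genuine gap is in case (c). You keep the nonexpansive reduction $\|p^{(n)}-\bar p\|_X\le r\|A_k^*(\theta^{(n)}-\bar\theta)\|_X$. You then try to force $A_k^*\theta^{(n)}\to A_k^*\bar\theta$ strongly, by bounding $\|A_k^*(\theta^{(n)}-\bar\theta)\|^2$ by the monotonicity gap $(\nabla d(\bar\theta)-\nabla d(\theta^{(n)}),\theta^{(n)}-\bar\theta)_Y$.

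That inequality runs the wrong way. With $w=z-rA_k^*\theta$ and $\bar w=z-rA_k^*\bar\theta$, the gap equals $\tfrac{1}{r}(\prox_{r\phi_0}(\bar w)-\prox_{r\phi_0}(w),\bar w-w)_X$. This is at most $r\|A_k^*(\theta-\bar\theta)\|_X^2$ and at least $\tfrac1r\|p(\theta)-\bar p\|_X^2$. For example, if $\phi_0=\delta_{\{x_0\}}$, then $\nabla d$ is constant and the gap vanishes identically, while $A_k^*(\theta-\bar\theta)$ need not.

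Strong convergence of $A_k^*\theta^{(n)}$ is not what forward--backward theory gives under the step bounds in (c). What it gives is strong convergence of the gradients, $\nabla d(\theta^{(n)})\to\nabla d(\bar\theta)$, i.e.\ $A_k(p^{(n)}-\bar p)\to 0$. That says nothing about $A_k^*(\theta^{(n)}-\bar\theta)$.

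The missing step is to drop the nonexpansive bound and apply firm nonexpansiveness of $\prox_{r\phi_0}$ directly to $p^{(n)}$ and $\bar p$:
\[
\|p^{(n)}-\bar p\|_X^2\le r\,(A_k^*(\bar\theta-\theta^{(n)}),p^{(n)}-\bar p)_X
= r\,(\theta^{(n)}-\bar\theta,\nabla d(\bar\theta)-\nabla d(\theta^{(n)}))_Y
\le r\,\|\theta^{(n)}-\bar\theta\|_Y\,\|\nabla d(\bar\theta)-\nabla d(\theta^{(n)})\|_Y .
\]
This pairs the bounded sequence $\theta^{(n)}-\bar\theta$ with the strongly vanishing gradient difference from \cite[Proposition~28.13]{bauschke2017convex}. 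Boundedness follows from weak convergence, or from your observation that $\dom{\phi_1^*}\subseteq\{\|\theta\|_Y\le M_{\phi_1}\}$. This is exactly the paper's argument.
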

\begin{proof}
  The proof of weak convergence follows arguments from \cite{bello2016weak} and
  \cite{de2023proximal}, which we have included in Appendix~\ref{ap:spg}.  In
  particular, $\{\theta^{(n)}\}$ are iterates generated by Algorithm~\ref{alg:spg}
  and so Theorem~\ref{th:spg-appendix} ensures weak convergence to a solution
  of the dual problem. To prove the final claim, we first note that
  \cite[Proposition~19.5]{bauschke2017convex} ensures that
  $\bar p=\prox_{r\psi_k}(z)=\prox_{r\phi}(z-rA_k^*\bar\theta)$.  Consequently,
  if (a) holds, then $A_k^*$ is compact, which implies that
  $A_k^*\theta^{(n)}\to A_k^*\bar\theta$ and the result follows from the
  continuity of the proximity operator. Similarly, if (b) holds, then
  $\theta^{(n)}\to\bar\theta$ and again the result follows.  Finally,
  if (c) holds, then by firm nonexpansivity
  \cite[Proposition~4.4]{bauschke2017convex}, we have that
  \begin{align*}
    \|p^{(n)}-\bar p\|_X^2
    &\le (-A_k^*\theta^{(n)}+A_k^*\bar\theta,\prox_{r\phi}(z-rA_k^*\theta^{(n)})-\bar p)_X \\
    &=(\theta^{(n)}-\bar\theta,-A_k\prox_{r\phi}(z-rA_k^*\theta^{(n)})+A_k\bar p)_Y \\
    &=(\theta^{(n)}-\bar\theta,\nabla d(\bar\theta)-\nabla d(\theta^{(n)}))_Y \\
    &\le \|\theta^{(n)}-\bar\theta\|_Y \|\nabla d(\bar\theta)-\nabla d(\theta^{(n)})\|_Y.
  \end{align*}
  Since $\{\theta^{(n)}\}$ is weakly convergent,
  $\{\|\theta^{(n)}-\bar\theta\|_Y\}$ is bounded. In addition,
  \cite[Proposition~28.13]{bauschke2017convex} ensures that
  $\nabla d(\theta^{(n)})\to\nabla d(\bar\theta)$, proving the claim.
\end{proof}

Our subsequent convergence analysis assumes that the proximity operator of
$\psi_k$ is computed exactly.  However, Algorithm~\ref{alg:prox-phik} is only
guaranteed to produce an inexact evaluation, which motivates research on methods
that can leverage inexact proximity operators
\cite{salzo2012inexact,villa2013accelerated}. As we will see in the numerical
results, Algorithm~\ref{alg:prox-phik} typically produces sufficiently accurate
approximations with only modest effort.

\section{Convergence Theory}\label{sec:conv}
In this section, we prove convergence of Algorithm~\ref{alg:tr} assuming the
proximity operator $\prox_{t\psi_k}$ is computed exactly.  The technical
results are partitioned into two classes.  First, we prove global convergence of
Algorithm~\ref{alg:tr} in the sense that $\{h_k\}$ accumulates at zero.  These
results leverage traditional trust-region proof techniques like those used to
prove \cite[Theorem~2]{baraldi2022proximal}.  We then postulate additional
assumptions on the problem data that ensure the sequence of iterates
$\{x_k\}$ generated by Algorithm~\ref{alg:tr} accumulates at a stationary point
of \eqref{eq:optprob}.  For these results, we employ the following notation to
distinguish between quantities depending on approximations (i.e., $b_k$, $A_k$
and $g_k$) and those that do not:
\[
\begin{aligned}
  \hat{\ell}_k(x) &\coloneqq f_1'(x_k)(x-x_k)+f_1(x_k) \\
  \hat{\psi}_k(x) &\coloneqq \phi_0(x) + \phi_1(\hat{\ell}_k(x)) \\
  \hat{H}_k(t) &\coloneqq \tfrac{1}{t}\|x_k-\prox_{t\hat{\psi}_k}(x_k-t\nabla f_1(x_k))\|_X\\
  H_k(t) &\coloneqq \tfrac{1}{t}\|x_k-\prox_{t\psi_k}(x_k-tg_k)\|_X.
\end{aligned}
\]
Notice that $h_k=H_k(t_k)$ and we similarly denote $\hat{h}_k=\hat{H}_k(t_k)$.
We also use the notation $\xrightarrow{\cK}$ and $\xrightharpoonup{\cK}$ denote
strong and weak convergence, respectively, along the subsequence
$\cK\subseteq\mathbb{N}$.

\subsection{Global Convergence}

To prove global convergence of Algorithm~\ref{alg:tr}, we extend the theory
in \cite{baraldi2022proximal}.  In the subsequent lemma, we update 
\cite[Lemma~8]{baraldi2022proximal} to account for the more general nonsmooth
term $\psi_k$.

\begin{lemma}\label{L:success}
  Fix $k \ge K_\eta$ with $K_\eta$ defined in \eqref{eq:K-eta} and let
  Assumption~\ref{as:data} hold.  If $h_k > 0$ and
  \begin{equation}
    (1+\|B_k\|_{\cL(X)})\Delta_k \le \kappa_{\rm vs} h_k
  \end{equation}
  with $\kappa_{\rm vs}\in(0,1)$ defined by
  \[
    \kappa_{\rm vs}\coloneqq\frac{\kappa_{\rm fcd}(1-\eta_2-\eta)}{\max\{\kappa_{\rm fcd},
      \tfrac{1}{2}\kappa_{\rm rad}^2,
      \tfrac{M_{f_0}}{2}\kappa_{\rm rad}^2+\kappa_{\rm grad}\kappa_{\rm rad}
    + M_{\phi_1}(\tfrac{M_{f_1}}{2}\kappa_{\rm rad}^2
    + \kappa_{\rm jac}\kappa_{\rm rad} + 2\kappa_{\rm val})\}},
  \]
  then $\rho_k \ge \eta_2$ and $\Delta_{k+1}\ge\Delta_k$.
\end{lemma}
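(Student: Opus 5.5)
Our plan is the standard trust-region argument: bound $|\ared-\pred|$ by a constant times $\Delta_k^2$, and use \eqref{eq:fcd} together with the hypothesis to show this is at most $(1-\eta_2-\eta)\pred$. Then $|\rho_k^*-1|\le 1-\eta_2-\eta$, and \eqref{eq:K-eta} (valid because $k\ge K_\eta$) gives $\rho_k\ge\rho_k^*-\eta\ge\eta_2$. Algorithm~\ref{alg:tr} then chooses $\Delta_{k+1}\in[\Delta_k,\gamma_3\Delta_k]$.

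\textbf{Step 1: the model decrease is governed by $\Delta_k$.} The hypothesis gives $\Delta_k\le\kappa_{\rm vs}h_k/(1+\|B_k\|)$, and $\kappa_{\rm vs}<1$. Hence $\Delta_k\le h_k/(1+\|B_k\|)$ and $\Delta_k\le h_k$, so \eqref{eq:fcd} reduces to $\pred\ge\kappa_{\rm fcd}h_k\Delta_k$. Also $\min\{h_k,\Delta_k\}=\Delta_k$ and $\min\{h_k,\Delta_k^2\}\le\Delta_k^2$, so Condition~\ref{as:inexactGrad} yields errors of size $\kappa_{\rm grad}\Delta_k$, $\kappa_{\rm jac}\Delta_k$ and $\kappa_{\rm val}\Delta_k^2$.

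\textbf{Step 2: bound $|J-m_k|$ at $x_k$ and $x_k^+$.} Set $s=x_k^+-x_k$, so $\|s\|\le\kappa_{\rm rad}\Delta_k$ by \eqref{eq:trcon}; note $x_k^+\in\dom{\phi_0}$. The $\phi_0$ terms cancel. We split the remaining difference into two pieces.
\begin{itemize}
\item \emph{Smooth part.} $f_0(x_k^+)-f_0(x_k)-(g_k,s)-\tfrac12(B_ks,s)$ is controlled by the descent lemma, giving $\tfrac{M_{f_0}}2\|s\|^2$, plus $\kappa_{\rm grad}\Delta_k\|s\|$ from the gradient error, plus $\tfrac12\|B_k\|\|s\|^2$.
\item \emph{Composite part.} By Lipschitz continuity of $\phi_1$,
\[
|\phi_1(f_1(x_k^+))-\phi_1(\ell_k(x_k^+))|\le M_{\phi_1}\bigl(\tfrac{M_{f_1}}2\|s\|^2+\kappa_{\rm jac}\Delta_k\|s\|+\kappa_{\rm val}\Delta_k^2\bigr),
\]
and at $x_k$ the difference is at most $M_{\phi_1}\kappa_{\rm val}\Delta_k^2$. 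This accounts for the factor $2\kappa_{\rm val}$.
\end{itemize}
Summing gives
\[
|\ared-\pred|\le \tfrac12\|B_k\|\kappa_{\rm rad}^2\Delta_k^2+C\Delta_k^2,
\]
where $C$ is the third entry of the max in $\kappa_{\rm vs}$.

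\textbf{Step 3: compare with $\pred$.} Write $\kappa_{\max}$ for the max appearing in the denominator of $\kappa_{\rm vs}$. We have $\tfrac12\|B_k\|\kappa_{\rm rad}^2+C\le\kappa_{\max}(1+\|B_k\|)$, so
\[
|\ared-\pred|\le\kappa_{\max}(1+\|B_k\|)\Delta_k^2\le\kappa_{\max}\kappa_{\rm vs}h_k\Delta_k=\kappa_{\rm fcd}(1-\eta_2-\eta)h_k\Delta_k\le(1-\eta_2-\eta)\pred .
\]
This is the required bound.

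\textbf{Main obstacle.} The delicate point is the bookkeeping in Step 2. We must use the $\Delta_k^2$ (not $h_k$) branch of \eqref{eq:inexactGrad2}, and the $\Delta_k$ branch of \eqref{eq:inexactGrad1} and \eqref{eq:inexactGrad3}, so that every error term is $O(\Delta_k^2)$. We must also make sure the $B_k$ contribution is absorbed by the factor $(1+\|B_k\|)$ in the hypothesis, not by $\kappa_{\rm vs}$ alone. The $\kappa_{\rm fcd}$ entry of the max is only there to guarantee $\kappa_{\rm vs}<1$, which Step 1 uses.
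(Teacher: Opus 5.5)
Your proposal is correct and follows essentially the same route as the paper. You split $\pred-\ared$ into a smooth part and a composite part, bound every term by a multiple of $\Delta_k^2$ using the $\Delta_k$-branches of Condition~\ref{as:inexactGrad} together with \eqref{eq:trcon}, absorb the $B_k$ contribution into the factor $(1+\|B_k\|_{\cL(X)})$, and divide by \eqref{eq:fcd}. The differences are cosmetic. You bound the linearization error of $f_1$ via Lipschitz continuity of $\phi_1$ and a vector-valued Taylor estimate, which gives a genuinely two-sided bound, whereas the paper scalarizes with a subgradient $\theta\in\partial\phi_1(f_1(x_k^+))$ and gets only a one-sided bound (which suffices). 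You also make explicit the use of $\kappa_{\rm vs}<1$ to resolve the minimum in \eqref{eq:fcd}, which the paper leaves implicit.
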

\begin{proof}
We first bound the difference between $\pred$ and $\ared$.  To this end,
let $s_k=x_k^+-x_k$ and recall that
\begin{align}
  \pred-\ared =&\, (q_k(x_k)+\phi_1(b_k)+\phi_0(x_k)-q_k(x_k^+)-\phi_1(\ell_k(x_k^+))-\phi_0(x_k^+)) \nonumber \\
  &-(f_0(x_k)+\phi_1(f_1(x_k))+\phi_0(x_k)-f_0(x_k^+)-\phi_1(f_1(x_k^+))-\phi_0(x_k^+)) \nonumber \\
  =&\,(q_k(x_k)-q_k(x_k^+))-(f_0(x_k)-f_0(x_k^+)) \nonumber \\
  &+(\phi_1(b_k)-\phi_1(\ell_k(x_k^+))-(\phi_1(f_1(x_k))-\phi_1(f_1(x_k^+))) \nonumber.
\end{align}
We individually bound the smooth and nonsmooth terms. For the smooth term,
we have that
\begin{align}
  (q_k(x_k)-q_k(x_k^+))&-(f_0(x_k)-f_0(x_k^+)) \nonumber \\
    &= -\tfrac{1}{2}(B_ks_k,s_k)_X - (g_k,s_k)_X
       + \int_0^1 (\nabla f_0(x_k+ts_k),s_k)_X\,\mathrm{d}t \nonumber \\
    &\le \tfrac{1}{2}\|B_k\|_{\mathcal{L}(X)}\|s_k\|_X^2 + \tfrac{M_{f_0}}{2}\|s_k\|_X^2
       + \kappa_{\rm grad}\|s_k\|_X\Delta_k \nonumber \\
    &\le (\tfrac{1}{2}\kappa_{\rm rad}^2\|B_k\|_{\cL(X)} + \tfrac{M_{f_0}}{2}\kappa_{\rm rad}^2
       + \kappa_{\rm grad}\kappa_{\rm rad})\Delta_k^2, \nonumber
\end{align}
where we used Assumption~\ref{as:data}, Condition~\ref{as:inexactGrad},
and \eqref{eq:trcon}.  For the nonsmooth term, Condition~\ref{as:inexactGrad} and
\eqref{eq:trcon} ensure that
\begin{align}
  \phi_1(b_k)-\phi_1(f_1(x_k))
    \le M_{\phi_1}\|b_k-f_1(x_k)\|_Y
    \le M_{\phi_1}\kappa_{\rm val}\Delta_k^2 \nonumber
\end{align}
and we have that
\begin{equation} \label{eq:success1}
\begin{aligned}
  \phi_1(f_1(x_k^+))-\phi_1(\ell_k(x_k^+))
   =&\, (\phi_1(f_1(x_k^+))-\phi_1(f_1(x_k)+f_1'(x_k)s_k)) \\
    &+(\phi_1(f_1(x_k)+f_1'(x_k)s_k)-\phi_1(\ell_k(x_k^+))).
\end{aligned}
\end{equation}
Again using Condition~\ref{as:inexactGrad}, we can bound the second term on the
right-hand side of \eqref{eq:success1} as
\begin{align*}
  \phi_1(f_1(x_k)+f_1'(x_k)s_k)-\phi_1(\ell_k(x_k^+))
    &\le M_{\phi_1}(\|(f_1'(x_k)-A_k)s_k\|_Y+\|b_k-f_1(x_k)\|_Y) \\
    &\le M_{\phi_1}(\kappa_{\rm jac}\|s_k\|_X\Delta_k+\kappa_{\rm val}\Delta_k^2) \\
    &\le M_{\phi_1}(\kappa_{\rm jac}\kappa_{\rm rad}+\kappa_{\rm val})\Delta_k^2,
\end{align*}
Finally, to bound the first term on the right-hand side of \eqref{eq:success1},
we first recall that for any $\theta\in\partial\phi_1(f_1(x_k^+))$ we have that
$\|\theta\|_Y\le M_{\phi_1}$ and
\[
  \phi_1(f_1(x_k^+)) - \phi_1(f_1(x_k)+f_1'(x_k)s_k)) \le (\theta,f_1(x_k^+)-f_1(x_k)-f_1'(x_k)s_k))_Y.
\]
Consequently, Assumption~\ref{as:data} ensures that
\begin{align*}
  \phi_1(f_1(x_k^+)) - \phi_1(f_1(x_k)+f_1'(x_k)s_k))
   &= \int_0^1 (\theta,(f_1'(x_k+ts_k)-f_1'(x_k))s_k)_Y\,\mathrm{d}t \\
   &\le \tfrac{1}{2} M_{\phi_1} M_{f_1} \|s_k\|_X^2 \\
   &\le \tfrac{1}{2} M_{\phi_1} M_{f_1} \kappa_{\rm rad}^2 \Delta_k^2,
\end{align*}
where we have used the fundamental theorem of calculus applied to the map
$t\mapsto (\theta, f_1(x_k+ts_k))_Y$ for fixed
$\theta\in\partial\phi_1(f_1(x_k^+))$ to arrive at the first equality.
Combining these bounds yields
\begin{equation}\label{eq:success2}
  |\pred-\ared| \le \frac{\kappa_{\rm fcd}(1-\eta_2-\eta)}{\kappa_{\rm vs}}(1+\|B_k\|_{\cL(X)})\Delta_k^2,
\end{equation}
where we have used H\"{o}lder's inequality.  Combining \eqref{eq:success2} and
\eqref{eq:fcd}, we obtain
\[
  |\rho_k^*-1| \le \frac{\kappa_{\rm fcd}(1-\eta_2-\eta)}{\kappa_{\rm vs}}
  \frac{(1+\|B_k\|_{\cL(X)})\Delta_k^2}{\kappa_{\rm fcd}h_k\min\left\{\frac{h_k}{1+\|B_k\|_{\cL(X)}},\Delta_k\right\}}
  \le (1-\eta_2-\eta).
\]
Consequently, we have that $\rho_k^* \ge \eta_2 + \eta$ and $\rho_k \ge \eta_2$
as was to be shown.
\end{proof}

Replacing \cite[Lemma~8]{baraldi2022proximal} with Lemma~\ref{L:success} in the
proof of \cite[Theorem~3]{baraldi2022proximal} enables us to prove the
convergence of Algorithm~\ref{alg:tr}.

\begin{theorem}\label{T:conv}
  Let $\{x_k\}$ be the sequence of iterates generated by Algorithm~\ref{alg:tr}.
  If Assumption~\ref{as:data} holds and if
  \[
    \sum_{k=1}^\infty (1+\displaystyle{\max_{j=1,\ldots,k}}\|B_j\|_{\cL(X)})^{-1} = +\infty,
  \]
  then
  \[
    \liminf_{k\to\infty}\,h_k = 0.
  \]
  In addition, if there exists $t_{\max}>0$ satisfying $t_k\le t_{\max}$ for
  all $k$, then
  \[
    \liminf_{k\to\infty}\,H_k(t) = 0 \quad\forall\, t>0.
  \]
\end{theorem}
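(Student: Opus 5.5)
The proof runs by contradiction, following the argument of \cite[Theorem~3]{baraldi2022proximal} with Lemma~\ref{L:success} replacing \cite[Lemma~8]{baraldi2022proximal}. Suppose $\liminf_{k\to\infty} h_k > 0$, so there are $\epsilon>0$ and $K\ge K_\eta$ with $h_k\ge\epsilon$ for all $k\ge K$. Set $\beta_k\coloneqq 1+\max_{j\le k}\|B_j\|_{\cL(X)}$, which is nondecreasing.

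The first step is a lower bound on the radius. Whenever $\Delta_k\le\kappa_{\rm vs}\epsilon/\beta_k$, we have $(1+\|B_k\|)\Delta_k\le\kappa_{\rm vs}h_k$, so Lemma~\ref{L:success} gives $\Delta_{k+1}\ge\Delta_k$. Otherwise the radius can shrink by at most the factor $\gamma_1$. A standard induction therefore gives
\[
\Delta_k\ge\min\{\Delta_K,\gamma_1\kappa_{\rm vs}\epsilon\}/\beta_k \eqqcolon c/\beta_k \quad \text{for all } k\ge K.
\]
The induction uses the monotonicity of $\beta_k$.

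The second step accumulates decrease over successful iterations $\mathcal S$, meaning those with $\rho_k\ge\eta_1$. By \eqref{eq:K-eta} such iterations satisfy $\rho_k^*\ge\eta_1-\eta>0$. Combining this with \eqref{eq:fcd} and the radius bound gives
\[
J(x_k)-J(x_{k+1})\ge(\eta_1-\eta)\kappa_{\rm fcd}\,\epsilon\min\{\epsilon,c\}/\beta_k.
\]
Since $J$ is bounded below by $\kappa_{\rm lb}$, it follows that $\sum_{k\in\mathcal S}\beta_k^{-1}<\infty$.

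The main obstacle is passing from the sum over successful iterations to the sum over all iterations, which diverges by hypothesis. I will argue as in \cite{baraldi2022proximal}. If $\mathcal S$ is finite, then $\Delta_k\to0$ geometrically while $\beta_k$ is eventually constant, so $\Delta_k<c/\beta_k$ eventually, contradicting the first step. If $\mathcal S$ is infinite, I will bound the number of consecutive unsuccessful iterations. Write $\mathcal S=\{k_1<k_2<\cdots\}$. Between $k_i$ and $k_{i+1}$, the radius starts at most $\gamma_3\Delta_{k_i}$ and contracts by at least $\gamma_2$ at each step. Since $\beta$ is constant on unsuccessful steps (no model change is assumed, or one uses monotonicity), $\Delta\ge c/\beta$ forces
\[
k_{i+1}-k_i\le 1+\log(\gamma_3\Delta_{k_i}\beta_{k_{i+1}}/c)/\log(1/\gamma_2).
\]
Monotonicity of $\beta$ then gives
\[
\sum_{k=k_i}^{k_{i+1}-1}\beta_k^{-1}\le(k_{i+1}-k_i)\beta_{k_i}^{-1}.
\]
To control this I would first try to show that $\Delta_{k_i}\beta_{k_i}$ stays bounded. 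If instead $\Delta_{k_i}\to0$ is used (it follows from summability of $\Delta_k$ on $\mathcal S$ by \eqref{eq:fcd} with $h_k\ge\epsilon$), then the gaps are uniformly bounded relative to $\beta$, and $\sum_k\beta_k^{-1}\le C\sum_{\mathcal S}\beta_k^{-1}<\infty$, a contradiction. The gap bound needs care when $\beta$ jumps during an unsuccessful stretch. I would handle this by using $\beta_{k_{i+1}}$ in place of $\beta_k$ and the fact that $\beta_k^{-1}$ is nonincreasing, exactly as in the cited proof.

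For the second claim, I will use the standard monotonicity properties of the proximal gradient step. By the convexity of $\psi_k$, the map $t\mapsto\|s_k(t)\|_X$ is nondecreasing and $t\mapsto\|s_k(t)\|_X/t$ is nonincreasing \cite{baraldi2022proximal}. For $t\le t_k$, this gives $H_k(t)\le\max\{1,t_k/t\}H_k(t_k)$. For $t\ge t_k$, it gives $H_k(t)\le H_k(t_k)$. In both cases $H_k(t)\le\max\{1,t_{\max}/t\}\,h_k$. Taking the $\liminf$ along the subsequence where $h_k\to0$ yields $\liminf_k H_k(t)=0$ for every $t>0$.
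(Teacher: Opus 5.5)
Your overall structure matches what the paper's citation of \cite{baraldi2022proximal} amounts to: contradiction, the radius bound $\Delta_k\ge c/\beta_k$ from Lemma~\ref{L:success}, and $\sum_{k\in\mathcal S}\beta_k^{-1}<\infty$ from \eqref{eq:fcd}. Your treatment of $H_k(t)$, which uses both monotonicity properties, is correct. It is in fact more complete than the paper's remark, which only records $H_k(t_{\max})\le h_k$. The gap is in the step you flag as the main obstacle. The arguments you give for passing from $\sum_{k\in\mathcal S}\beta_k^{-1}<\infty$ to $\sum_k\beta_k^{-1}<\infty$ do not work, for three reasons.

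(i) With $h_k\ge\epsilon$, \eqref{eq:fcd} only makes $\min\{\epsilon/\beta_k,\Delta_k\}$ summable over $\mathcal S$, not $\Delta_k$. When $\beta_k\to\infty$ the radius may keep growing on very successful steps, so neither $\Delta_{k_i}\to0$ nor a bound on $\Delta_{k_i}\beta_{k_i}$ follows.

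(ii) Even granting $\Delta_{k_i}\to0$, your gap bound $k_{i+1}-k_i\le 1+\log(\gamma_3\Delta_{k_i}\beta_{k_{i+1}}/c)/\log(1/\gamma_2)$ is not uniform, because $\beta_{k_{i+1}}$ is unbounded. So $(k_{i+1}-k_i)\beta_{k_i}^{-1}$ is not bounded by a fixed multiple of $\beta_{k_i}^{-1}$, and $\sum_k\beta_k^{-1}\le C\sum_{\mathcal S}\beta_k^{-1}$ is not obtained.

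(iii) Algorithm~\ref{alg:tr} reselects the model at every iteration, so $\beta_k$ need not be constant on rejected steps. In the finite-$\mathcal S$ case the contradiction comes instead from $\beta_k^{-1}\le\Delta_k/c$ decaying geometrically, not from $\beta_k$ being eventually constant.

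What closes the argument is a global count rather than a gap-by-gap bound. For $n\ge K$ let $S_n$ and $U_n$ count the successful and unsuccessful iterations in $\{K,\dots,n\}$.

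First, bound $U_n$. The update rules give $\Delta_n\le\Delta_K\gamma_3^{S_{n-1}}\gamma_2^{U_{n-1}}$. Combine this with $\Delta_n\ge c/\beta_n$ and $U_n\le U_{n-1}+1$. This yields $U_n\le C_1+C_2\log\beta_n+C_3S_n$, with $C_2=1/\log(1/\gamma_2)$, $C_3=\log\gamma_3/\log(1/\gamma_2)$, and $C_1$ independent of $n$.

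Second, apply Abel summation with the nonincreasing weights $\beta_k^{-1}$. This gives
\[
\sum_{k\ge K,\,k\notin\mathcal S}\beta_k^{-1}\le\frac{C_1}{\beta_K}+C_3\sum_{k\ge K,\,k\in\mathcal S}\beta_k^{-1}+C_2\Bigl(\sup_{n}\frac{\log\beta_n}{\beta_n}+\sum_{k\ge K}\log\beta_k\bigl(\beta_k^{-1}-\beta_{k+1}^{-1}\bigr)\Bigr).
\]
The last sum is at most $\int_1^\infty u^{-2}\log u\,\mathrm{d}u=1$, since $\log\beta_k\,(\beta_k^{-1}-\beta_{k+1}^{-1})\le\int_{\beta_k}^{\beta_{k+1}}u^{-2}\log u\,\mathrm{d}u$.

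Hence $\sum_k\beta_k^{-1}<\infty$, contradicting the hypothesis. This covers finite and infinite $\mathcal S$ at once, and it does not require $B_k$ to be frozen on rejected steps.
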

\begin{proof}
  The proof of this result is nearly identical to the proof of
  \cite[Theorem~3,~Equation~(40)]{baraldi2022proximal} with
  \cite[Lemma~8]{baraldi2022proximal} replaced by Lemma~\ref{L:success}.
  Finally, the monotonicity of $t\mapsto H_k(t)$ (cf.\
  \cite[Lemma~2]{baraldi2022proximal}) and the upper bound on $t_k$ ensure that
  $H_k(t_{\max}) \le h_k$ proving the final result.
\end{proof}

In our first corollary to Theorem~\ref{T:conv}, we relate the convergence
of $h_k$ and $H_k(t)$ with $\hat{h}_k$ and $\hat{H}_k(t)$.

\begin{corollary}\label{C:conv1}
  Let the assumptions of Theorem~\ref{T:conv} hold. If there exist
  $0<t_{\min}\le t_{\max}$ such that $t_k\in[t_{\min},t_{\max}]$ for all $k$
  then
  \[
    \liminf_{k\to\infty}\; \hat{h}_k = 0
    \qquad\text{and}\qquad
    \liminf_{k\to\infty}\; \hat{H}_k(t) = 0 \quad\forall\, t>0.
  \]
\end{corollary}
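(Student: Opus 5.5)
Since $\liminf_k h_k=0$ by Theorem~\ref{T:conv}, the plan is to show that along any subsequence with $h_k\to0$ we also have $|h_k-\hat h_k|\to0$, which gives $\liminf\hat h_k=0$. (I read the $\nabla f_1(x_k)$ in the definition of $\hat H_k$ as $\nabla f_0(x_k)$, which is what makes $\hat h_k$ the exact stationarity measure.) By the triangle inequality,
\[
  |h_k-\hat h_k|\le \tfrac{1}{t_k}\big(\|\prox_{t_k\psi_k}(x_k-t_kg_k)-\prox_{t_k\psi_k}(x_k-t_k\nabla f_0(x_k))\|_X + \|p_1-p_2\|_X\big),
\]
where $p_1=\prox_{t_k\psi_k}(z)$, $p_2=\prox_{t_k\hat\psi_k}(z)$ and $z=x_k-t_k\nabla f_0(x_k)$. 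The first term is bounded by $\|g_k-\nabla f_0(x_k)\|_X\le\kappa_{\rm grad}h_k$, using nonexpansiveness of the proximity operator and \eqref{eq:inexactGrad1}.

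For the second term I would apply Lemma~\ref{L:proxDiff} with $D_1=A_k$, $D_2=f_1'(x_k)$, $u_1=u_2=x_k$, $d_1=b_k$, $d_2=f_1(x_k)$ and $t=t_k$. Then $d_1-d_2-D_2(u_1-u_2)=b_k-f_1(x_k)$, so Condition~\ref{as:inexactGrad} gives
\[
  \frac{\|p_1-p_2\|_X^2}{\max\{1,\|p_1-p_2\|_X\}}\le 2t_{\max}M_{\phi_1}\big(\kappa_{\rm jac}h_k(1+2\|p_1-x_k\|_X)+2\kappa_{\rm val}h_k\big).
\]
The key step is to control $\|p_1-x_k\|_X$. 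I would write $p_1-x_k=(\prox_{t_k\psi_k}(x_k-t_kg_k)-x_k)+(p_1-\prox_{t_k\psi_k}(x_k-t_kg_k))$, which gives $\|p_1-x_k\|_X\le t_kh_k+t_k\kappa_{\rm grad}h_k\le t_{\max}(1+\kappa_{\rm grad})h_k$. Hence the right-hand side is $O(h_k)$ and tends to zero as $h_k\to0$. This forces $\|p_1-p_2\|_X\to0$: once the right-hand side is below $1$, the left-hand side equals $\min\{\|p_1-p_2\|^2,\|p_1-p_2\|\}$, so $\|p_1-p_2\|_X\le\sqrt{O(h_k)}$. Dividing by $t_k\ge t_{\min}$ then gives $|h_k-\hat h_k|\le O(h_k)+O(\sqrt{h_k})/t_{\min}\to0$ along the subsequence, which proves the first claim.

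For the second claim, fix $t>0$ and run the same argument with $t$ in place of $t_k$ along the subsequence where $h_k\to0$, taking $p_1=\prox_{t\psi_k}(z)$, $p_2=\prox_{t\hat\psi_k}(z)$ with $z=x_k-t\nabla f_0(x_k)$. Two facts are needed.
\begin{enumerate}
\item $H_k(t)\to0$ along this subsequence. For $t\ge t_{\max}$ this follows from the monotonicity $H_k(t)\le H_k(t_k)=h_k$ of \cite[Lemma~2]{baraldi2022proximal}. For $t<t_k$ I would use the companion monotonicity that $t\mapsto tH_k(t)$ is nondecreasing, which gives $H_k(t)\le (t_{\max}/t)h_k$.
\item $\|p_1-x_k\|_X\le t\,H_k(t)+t\kappa_{\rm grad}h_k=O(h_k)$.
\end{enumerate}
The bound from Lemma~\ref{L:proxDiff} is then $O(h_k)$ with constants depending on $t$, so $|H_k(t)-\hat H_k(t)|\to0$ and $\liminf_k\hat H_k(t)=0$.

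The main obstacle is that Lemma~\ref{L:proxDiff} only gives a square-root-type rate and involves the unknown quantity $\|p_1-u_1\|_X$. Bounding $\|p_1-u_1\|_X$ by the stationarity measure itself, through the triangle inequality with the computed prox point, is what closes the argument. It is also where the lower bound $t_{\min}$ is needed, to divide out the $1/t_k$ factor.
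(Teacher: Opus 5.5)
Your proposal is correct and follows the paper's proof. It uses the same application of Lemma~\ref{L:proxDiff} with $u_1=u_2=x_k$, the same bound $\|p_1-x_k\|_X\le t_{\max}(1+\kappa_{\rm grad})h_k$ obtained by passing through the computed prox point $\prox_{t_k\psi_k}(x_k-t_kg_k)$, and the same use of $t_{\min}$ to absorb the factor $1/t_k$; working along a subsequence with $h_k\to0$ is the rigorous form of the paper's manipulation of lower limits. The only deviation is in the second claim: the paper passes from $\hat h_k\to0$ to $\hat H_k(t)$ directly through monotonicity in $t$, giving $\hat H_k(t)\le\max\{1,t_{\max}/t\}\,\hat h_k$, whereas you apply that monotonicity to $H_k$ and rerun the prox comparison at fixed $t$, which is equally valid.
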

\begin{proof}
  We first prove a bound between the proximity
  operators associated with $\psi_k$ and $\hat{\psi}_k$.  Let
  $p_k=\prox_{t_k\psi_k}(x_k-t_k\nabla f_0(x_k))$ and
  $\hat{p}_k=\prox_{t_k\hat{\psi}_k}(x_k-t_k\nabla f_0(x_k))$.  Applying
  Lemma~\ref{L:proxDiff} with $\Psi_1=\psi_k$ and $\Psi_2=\hat{\psi}_k$
  and noting that
  \[
    \left\{
    \begin{aligned}
    &\|D_1-D_2\|_{\cL(X,Y)}=\|A_k-f_1'(x_k)\|_{\cL(X,Y)}\le \kappa_{\rm jac} h_k \\
    &\|p_1-u_1\|_X=\|p_k-x_k\|_X\le \kappa_{\rm grad}t_kh_k + t_kh_k \le t_{\max}(\kappa_{\rm grad}+1)h_k \\
    &\|d_1-d_2-D_2(u_1-u_2)\|_Y=\|b_k-f_1(x_k)\|_Y\le \kappa_{\rm val}h_k,
    \end{aligned}
    \right.
  \]
  we arrive at the bound
  \[
    \frac{1}{t_k}\frac{\|p_k-\hat{p}_k\|_X^2}{\max\{1,\|p_k-\hat{p}_k\|_X\}}
    \le 2M_{\phi_1}(\kappa_{\rm jac}(1+2t_{\max}(\kappa_{\rm grad}+1)h_k)+2\kappa_{\rm val})h_k.
  \]
  Notice that this and the bounds on $t_k$ imply that
  \[
    \liminf_{k\to\infty}\frac{1}{t_k}\|p_k-\hat{p}_k\|_X = 0.
  \]
  The nonexpansivity of the proximity operator and
  Condition~\ref{as:inexactGrad} then ensure that
  \[
  \begin{aligned}
    \liminf_{k\to\infty}& \tfrac{1}{t_k}\|x_k - \prox_{t_k\hat{\psi}_k}(x_k-t_k\nabla f_0(x_k))\|_X \\
    \le&\, \liminf_{k\to\infty} \tfrac{1}{t_k}\|x_k - \prox_{t_k\psi_k}(x_k-t_k\nabla f_0(x_k))\|_X
          +\liminf_{k\to\infty} \tfrac{1}{t_k}\|p_k-\hat{p}_k\|_X \\
    \le&\, \liminf_{k\to\infty}\, (\kappa_{\rm grad}+1)h_k = 0.
  \end{aligned}
  \]
  The final result follows from the monotonicity of $t\mapsto H_k(t)$
  \cite[Lemma~2]{baraldi2022proximal}. In particular, $H_k(t) \le h_k$ for all
  $t\ge t_{\max}$ and for any $t\le t_{\max}$,
  $H_k(t) \le \frac{t_{\max}}{t} H_k(t_{\max}) \le \frac{t_{\max}}{t}h_k$.
\end{proof}

Under additional assumptions, we can strengthen the lower limit in
Theorem~\ref{T:conv} to a limit.

\begin{theorem}\label{T:conv-lim}
  Let the assumptions of Corollary~\ref{C:conv1} hold. If there exists
  $\kappa_{\rm curv}>0$ satisfying $\|B_k\|_{\cL(X)}\le\kappa_{\rm curv}$ for
  all $k\in\mathbb{N}$, then
  \[
    \lim_{k\to\infty} h_k = \lim_{k\to\infty} H_k(t)
    = \lim_{k\to\infty} \hat{H}_k(t) = 0 \quad\forall\, t>0.
  \]
\end{theorem}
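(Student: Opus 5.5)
We prove $\lim_k h_k=0$ by contradiction, following the standard lim-inf-to-lim argument for trust-region methods used in \cite[Theorem~4]{baraldi2022proximal}. The statements for $H_k(t)$ and $\hat H_k(t)$ then follow exactly as at the end of the proofs of Theorem~\ref{T:conv} and Corollary~\ref{C:conv1}, now with limits in place of lower limits. For $H_k(t)$ we use the monotonicity of $t\mapsto H_k(t)$ and of $t\mapsto tH_k(t)$ \cite[Lemma~2]{baraldi2022proximal} together with $t_k\in[t_{\min},t_{\max}]$. For $\hat H_k(t)$ we use the bound from Lemma~\ref{L:proxDiff} derived in Corollary~\ref{C:conv1}, which controls $\tfrac1{t_k}\|p_k-\hat p_k\|_X$ by a quantity tending to zero whenever $h_k\to0$. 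So the real work is $h_k\to0$.

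Suppose not. Then there are $\varepsilon>0$ and infinitely many indices $m$ with $h_m\ge 2\varepsilon$. By Theorem~\ref{T:conv} (bounded $B_k$ makes the divergent-series hypothesis hold), for each such $m$ there is a first later index $\ell(m)>m$ with $h_{\ell(m)}<\varepsilon$. We therefore get infinitely many disjoint index blocks $\mathcal{B}_m=\{m,\ldots,\ell(m)-1\}$ on which $h_k\ge\varepsilon$.

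On successful iterations $k\in\mathcal{B}_m$ with $k\ge K_\eta$, we have $\rho_k^*\ge\eta_1-\eta>0$. Combining \eqref{eq:fcd} with $\|B_k\|\le\kappa_{\rm curv}$ gives
\[
J(x_k)-J(x_{k+1})\ge(\eta_1-\eta)\kappa_{\rm fcd}\,\varepsilon\min\{\varepsilon/(1+\kappa_{\rm curv}),\Delta_k\}.
\]
Because $J$ is monotone nonincreasing along accepted steps for $k\ge K_\eta$ and bounded below by $\kappa_{\rm lb}$, these decreases are summable. Consequently, the sum of $\Delta_k$ over successful $k$ in the blocks tends to zero as $m\to\infty$ (the $\varepsilon/(1+\kappa_{\rm curv})$ branch can occur only finitely often). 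Then \eqref{eq:trcon} gives $\|x_{\ell(m)}-x_m\|_X\le\kappa_{\rm rad}\sum_{k\in\mathcal{B}_m,\text{succ}}\Delta_k\to0$.

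It remains to show that $h_k$ is uniformly continuous enough in $x_k$ to contradict $h_m-h_{\ell(m)}\ge\varepsilon$ while $\|x_{\ell(m)}-x_m\|\to0$. This is the step I expect to be the main obstacle, because $h_k$ depends on the inexact data $g_k,A_k,b_k$ and on $t_k$, not just on $x_k$. I would first pass from $h_k$ to the exact quantity $\hat h_k$ on both ends. Condition~\ref{as:inexactGrad}, nonexpansivity of the prox, and the Lemma~\ref{L:proxDiff} estimate from Corollary~\ref{C:conv1} control $|h_k-\hat h_k|$. At $k=\ell(m)$ this error is small because $h_{\ell(m)}<\varepsilon$ and $\Delta_{\ell(m)}\to0$. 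At $k=m$ the error terms are bounded by $\min\{h_k,\Delta_k\}\le\Delta_m$, and $\Delta_m\to0$ must be justified by the same summability argument. I will assume the author handles this via the $\min$ in Condition~\ref{as:inexactGrad}.

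Next I would fix a single $t$, say $t_{\min}$, relating $h_k$ to $\hat H_k(t_{\min})$ within the factor $t_{\max}/t_{\min}$ by monotonicity; it may be cleaner to run the whole contradiction with $\hat H_k(t_{\min})$ and thresholds $2\varepsilon,\varepsilon$. Finally I would show that $x\mapsto\tfrac1t\|x-\prox_{t\psi_x}(x-t\nabla f_0(x))\|$ is uniformly continuous on $\dom{\phi_0}$. The term $\nabla f_0$ is Lipschitz, and the change $\psi_{x}\to\psi_{x'}$ is handled by Lemma~\ref{L:proxDiff} with $D_i=f_1'(\cdot)$, $u_i=x,x'$, $d_i=f_1(\cdot)$. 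The right-hand side there is $O(\|x-x'\|)$ times terms involving $\|p_1-u_1\|$, which is bounded by $t$ times the (bounded) stationarity measure, plus $\|f_1(x)-f_1(x')-f_1'(x')(x-x')\|=O(\|x-x'\|^2)$. This yields $|\hat H_{\ell(m)}-\hat H_m|\to0$, which is the desired contradiction.
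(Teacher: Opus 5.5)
\paragraph{The gap.} It is the step you defer to the author, namely ``$\Delta_m\to0$ by the same summability argument''. Summability only controls $\Delta_j$ at \emph{successful} indices $j\ge K_\eta$ inside the blocks, because those are the only iterations at which $J$ decreases. Your $m$ is an arbitrary index with $h_m\ge2\varepsilon$. If step $m$ is rejected, or if the block $\{m,\dots,\ell(m)-1\}$ contains no successful index at all (so $x_m=x_{\ell(m)}$), nothing forces $\Delta_m$ to be small. Condition~\ref{as:inexactGrad} then only bounds the data errors by $\kappa_{\rm grad}h_m$, $\kappa_{\rm jac}h_m$ and $\kappa_{\rm val}h_m$. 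The $\min$ does not help, and $|h_m-\hat h_m|$ need not be small.

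The same problem appears at $\ell(m)$. There $h_{\ell(m)}<\varepsilon$ only makes the errors of order $\varepsilon$ (of order $\sqrt{\varepsilon}$ after Lemma~\ref{L:proxDiff}), which does not beat the gap $\varepsilon$. You need $\Delta_{\ell(m)}\to0$. This follows from $\Delta_{\ell(m)}\le\gamma_3\Delta_j$, with $j$ the last successful index of the block, but only when such a $j$ exists. Separately, passing between $h_k$ and a fixed-$t$ measure at the cost of the factor $t_{\max}/t_{\min}$ destroys the $2\varepsilon$-versus-$\varepsilon$ gap, so $t$ must be fixed from the outset.

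\paragraph{How the paper handles it, and why the gap cannot be closed.} The paper fixes $t=t_{\max}$, works with the inexact $H_k(t)$ throughout, and takes the offending subsequence $\cK$ inside the successful set $\mathcal{S}$. The decrease estimate then yields $\Delta_k\to0$ on $\cK$ and at $k^+$. It then compares $\psi_k$ and $\psi_{k^+}$ directly through Lemma~\ref{L:proxDiff}, bounding $\|A_{k^+}-A_k\|$ and $\|b_{k^+}-b_k-A_k(x_{k^+}-x_k)\|$ by $\Delta$- and $\|x_k-x_{k^+}\|_X$-terms. The restriction $\cK\subseteq\mathcal{S}$ is exactly your missing step. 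It is not justified for $h_k$ and $H_k(t)$, whose data $g_k,A_k,b_k$ are re-chosen at rejected steps while $x_k$ stays put.

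In fact the restriction cannot be removed. Take $X=Y=\real$, $f_0(x)=\tfrac12x^2$, $f_1\equiv0$, $\phi_0\equiv0$, $\phi_1=|\cdot|$, $A_k=b_k=0$, $B_k=1$, $t_k=1$, $\cred=\ared$, $\kappa_{\rm grad}=2$, $\kappa_{\rm rad}=1$, $\kappa_{\rm fcd}=0.1$, $\gamma_1=\gamma_2=\tfrac14$, $\gamma_3=10$. Then $\psi_k\equiv0$, $h_k=H_k(t)=|g_k|$ and $\hat H_k(t)=|x_k|$. Start from $x_1=c/10$, $\Delta_1=c$ and alternate two kinds of step:
\begin{itemize}
\item $g_k=c$, $x_k^+=x_k-c$. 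This is admissible since $|g_k-x_k|\le c\le2\min\{c,\Delta_k\}$. It is rejected since $\ared=cx_k-\tfrac12c^2<0$, and $\Delta_{k+1}=\Delta_k/4$.
\item $g_k=x_k$, $x_k^+=x_k/2$. Here $\rho_k=1$, so the step is accepted, and $\Delta_{k+1}=10\Delta_k$.
\end{itemize}
Then \eqref{eq:trial} holds throughout, $x_k\downarrow0$, and $\Delta_k\ge c$ before every rejected step. Hence $h_k=c$ infinitely often, although $\liminf_k h_k=0$ and all hypotheses of the theorem hold.

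\paragraph{What survives.} What is provable, by the paper's argument or yours, is $\lim_{k\in\mathcal{S}}h_k=0$ and $\lim_k\hat H_k(t)=0$. The latter holds because $\hat H_k(t)$ depends on $x_k$ alone. Your fallback of running the contradiction on $\hat H_k(t)$ for fixed $t$ proves that last claim cleanly: it never compares inexact models at different iterations, and blocks without successful steps become trivial. You need to add two points.
\begin{itemize}
\item $\hat H_k(t)\ge\varepsilon$ forces $h_k\ge\delta(\varepsilon)>0$, because the estimate in the proof of Corollary~\ref{C:conv1} bounds $\hat H_k(t)$ by a function of $h_k$ vanishing at $0$. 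This is what lets the decrease bound apply on the blocks.
\item Lemma~\ref{L:proxDiff} should be applied with $\Psi_1=\psi_{x_{\ell(m)}}$ at the point $x_{\ell(m)}-t\nabla f_0(x_{\ell(m)})$. Then the factor $\|p_1-u_1\|_X=t\hat H_{\ell(m)}(t)<t\varepsilon$ is bounded, rather than relying on uniform continuity on all of $\dom{\phi_0}$.
\end{itemize}
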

\begin{proof}
  The proof of this is similar to the proof of
  \cite[Theorem~1]{baraldi2024local} with modifications to account for the
  more general objective function.  Let
  $\mathcal{S}\coloneqq\{k\in\mathbb{N}\,\vert\,\rho_k\ge\eta_1\}$ denote the
  set of indices corresponding to successful iterations.  If $\mathcal{S}$ is
  finite, then $x_k=\bar{x}$ for $k \ge |\mathcal{S}|$ sufficiently large and
  $\Delta_k\to 0$.  Define $p_k=\prox_{t\psi_k}(\bar{x}-t\nabla f_0(\bar x))$
  and
  $\bar p(t)=\prox_{t\psi_{\bar x}}(\bar x-t\nabla f_0(\bar x))=\prox_{t\hat{\psi}_k}(x_k-t\nabla f_0(x_k))$
  for fixed $t>0$ and $k\ge |\mathcal{S}|$. Applying Lemma~\ref{L:proxDiff}
  with $\Psi_1=\psi_k$ and $\Psi_2=\hat{\psi}_k=\psi_{\bar x}$ and noting that
  (via Condition~\ref{as:inexactGrad})
  \[
    \left\{
    \begin{aligned}
    &\|D_1-D_2\|_{\cL(X,Y)}=\|A_k-f_1'(\bar x)\|_{\cL(X,Y)}\le \kappa_{\rm jac} \Delta_k \\
    &\|p_1-u_1\|_X=\|p_k-x_k\|_X\le \kappa_{\rm grad}t\Delta_k + t\Delta_k \le t(\kappa_{\rm grad}+1)\Delta_k \\
    &\|d_1-d_2-D_2(u_1-u_2)\|_Y=\|b_k-f_1(\bar x)\|_Y\le \kappa_{\rm val}\Delta_k^2,
    \end{aligned}
    \right.
  \]
  we arrive at the bound
  \[
    \frac{1}{t}\frac{\|p_k-\bar p(t)\|_X^2}{\max\{1,\|p_k-\bar p(t)\|_X\}}
    \le 2M_{\phi_1}(\kappa_{\rm jac}(1+2t(\kappa_{\rm grad}+1)\Delta_k)+2\kappa_{\rm val}\Delta_k)\Delta_k,
  \]
  which implies that
  \[
    \lim_{k\to\infty}\frac{1}{t}\|p_k-\bar p(t)\|_X = 0.
  \]
  Consequently, we have that
  \[
    |H_k(t)-\tfrac{1}{t}\|\bar p(t)-\bar x\|_X|
      \le \kappa_{\rm grad}\Delta_k + \tfrac{1}{t}\|p_k-\bar p(t)\|_X
  \]
  and so $H_k(t)\to\tfrac{1}{t}\|\bar p(t)-\bar x\|_X$ and the desired result
  follows from monotonicity of $t\mapsto H_k(t)$ and the arguments in the proof
  of Corollary~\ref{C:conv1}.

  Now suppose that $\mathcal{S}$ is infinite and set $t=t_{\max}$. To arrive at
  a contradiction, we assume that there exists $\epsilon>0$ and a subsequence
  $\cK\subseteq\mathcal{S}$ satisfying
  \begin{equation}\label{eq:conv-lim-1}
    h_k\ge H_k(t) \ge 2\epsilon > 0 \quad\forall\,k\in\cK.
  \end{equation}
  By Theorem~\ref{T:conv}, for each $k\ge\cK$ there exists $\ell>k$ for
  which $H_\ell(t)<\epsilon$.  Let $k^+>k$ denote the first such index
  and define
  \[
    \mathcal{S}_0\coloneqq\{j\in\mathcal{S}\,\vert\, k \le j < k^+ \;\;\forall\,k\in\cK\}.
  \]
  Note that for all $j\in\mathcal{S}_0$, we have that $H_j(t)\ge\epsilon$.
  By \cite[Lemma~2]{baraldi2022proximal}, there exists $K_\eta\in\mathbb{N}$
  such that $|\rho_j^*-\rho_j|\le\eta$ for all $j\ge K_\eta$.  This, the
  boundedness of $\{\|B_k\|_{\cL(X)}\}$ and \eqref{eq:fcd} ensure that
  \[
    J(x_j)-J(x_{j+1}) \ge (\eta_1-\eta)\kappa_{\rm fcd}\epsilon\min\left\{\frac{\epsilon}{1+\kappa_{\rm curv}},\Delta_j\right\}
  \]
  for all $j\in\mathcal{S}_0$ with $j\ge K_\eta$.  Since $\{J(x_k)\}$ is
  monotonically decreasing and bounded below, we have that the left-hand
  side converges to zero and hence so does $\Delta_j$.  Consequently,
  for sufficiently large $j\in\mathcal{S}_0$, we obtain
  \[
    \Delta_j \le \frac{J(x_{j+1})-J(x_j)}{\kappa_{\rm fcd}\epsilon(\eta_1-\eta)}.
  \]
  For sufficiently large $k\in\cK$, this and the triangle inequality ensure
  that
  \[
    \|x_k-x_{k^+}\|_X \le \sum_{j=k}^{k^+-1} \|x_j-x_{j+1}\|_X \le \kappa_{\rm rad}\sum_{j=k}^{k^+-1}\Delta_j
    \le \frac{\kappa_{\rm rad}[J(x_k)-J(x_{k^+})]}{\kappa_{\rm fcd}\epsilon(\eta_1-\eta)}.
  \]
  The right-hand side converges to zero and therefore so does the left-hand
  side as $\cK\ni k\to\infty$.  Lipschitz continuity then ensures that
  $\|\nabla f_0(x_k)-\nabla f_0(x_{k^+})\|_X$ and $\|f_1'(x_k)-f_1'(x_{k^+})\|_{\cL(X,Y)}$
  tend to zero as $\cK\ni k\to\infty$.  Now, for $k\in\cK$, the reverse
  triangle inequality yields
  \[
  \begin{aligned}
    \epsilon &\le |H_k(t)-H_{k^+}(t)| \\
    &\le \frac{1}{t}\|x_k-x_{k^+}\|_X + \frac{1}{t}\|\prox_{t\psi_k}(x_k-tg_k)-\prox_{t\psi_{k^+}}(x_{k^+}-tg_{k^+})\|_X.
  \end{aligned}
  \]
  Nonexpansivity of the proximity operator then implies that
  \begin{align}
    \|\prox_{t\psi_k}(x_k-tg_k)&-\prox_{t\psi_{k^+}}(x_{k^+}-tg_{k^+})\|_X \nonumber\\
    \le& \|x_k-x_{k^+}\|_X + t\|g_k-g_{k^+}\|_X \nonumber\\
       &+ \|\prox_{t\psi_k}(x_k-tg_k)-\prox_{t\psi_{k^+}}(x_k-tg_k)\|_X.\label{eq:conv-lim-2}
  \end{align}
  Owing to Condition~\ref{as:inexactGrad} and Lipschitz continuity, we have
  that
  \[
    \|g_k-g_{k^+}\|_X \le \kappa_{\rm grad}(\Delta_k + \Delta_{k^+}) + M_{f_0}\|x_k-x_{k^+}\|_X.
  \]
  It remains to bound the final term on the right-hand side of
  \eqref{eq:conv-lim-2}. For this, we employ Lemma~\ref{L:proxDiff}
  with $\Psi_1=\psi_{k^+}$ and $\Psi_2=\psi_k$.  In this setting,
  we have that
  \[
    \left\{
    \begin{aligned}
    &\|D_1-D_2\|_{\cL(X,Y)}=\|A_{k^+}-A_k\|_{\cL(X,Y)} \\
    &\|p_1-u_1\|_X=\|\prox_{t\psi_{k^+}}(x_k-tg_k)-x_{k^+}\|_X \le \|x_k-x_{k^+}\|_X+t\|g_k-g_{k^+}\|_X + tH_{k^+}(t) \\
    &\|d_1-d_2-D_2(u_1-u_2)\|_Y=\|b_{k^+}-b_k-A_k(x_{k^+}-x_k)\|_Y.
    \end{aligned}
    \right.
  \]
  Note that $\|A_{k^+}-A_k\|_{\cL(X,Y)}$ tends to zero because of
  Condition~\ref{as:inexactGrad} and the fact that $\Delta_k$
  and $\|x_k-x_{k^+}\|_X$ tend to zero.  That is,
  \[
    \begin{aligned}
    \|A_{k^+}-A_k\|_{\cL(X,Y)} \le&\, \|A_k-f_1'(x_k)\|_{\cL(X,Y)} + \|A_{k^+}-f_1'(x_{k^+})\|_{\cL(X,Y)} \\
       &+ \|f_1'(x_k)-f_1'(x_{k^+})\|_{\cL(X,Y)} \\
    \le&\, \kappa_{\rm jac}(\Delta_k+\Delta_{k^+}) + M_{f_1} \|x_k-x_{k^+}\|_X.
    \end{aligned}
  \]
  Using similar arguments combined with the mean value theorem, we can bound
  \[
    \begin{aligned}
    \|b_{k^+}-&b_k-A_k(x_{k^+}-x_k)\|_Y \\
	    \le&\, \|b_k-f_1(x_k)\|_Y + \|b_{k^+}-f_1(x_{k^+})\|_{Y} + \|A_k-f_1'(x_k)\|_{\cL(X,Y)}\|x_k-x_{k^+}\|_X \\
       &+ \|f_1(x_{k^+})-f_1(x_k)-f_1'(x_k)(x_{k^+}-x_k)\|_Y \\
    \le&\, \kappa_{\rm val}(\Delta_k^2+\Delta_{k^+}^2)+\kappa_{\rm jac}\Delta_k\|x_k-x_{k^+}\|_X
        +\tfrac{1}{2}M_{f_1}\|x_k-x_{k^+}\|_X^2.
    \end{aligned}
  \]
  Finally, there exists $c>0$, independent of $k$, such that
  $\|\prox_{t\psi_{k^+}}(x_k-tg_k)-x_{k^+}\|_X\le c$ for all $k$ since
  $\|x_k-x_{k^+}\|_X$ and $\|g_k-g_{k^+}\|_X$ converge to zero, and
  $H_{k^+}(t)\le h_{k^+}\le\varepsilon$.
  Defining
  \[
    \begin{aligned}
    p_k&=\prox_{t\psi_k}(x_k-tg_k), \qquad
    \tilde{p}_k=\prox_{t\psi_{k^+}}(x_k-tg_k), \\
    \mu_k&=M_{\phi_1}(\kappa_{\rm jac}(\Delta_k+\Delta_{k^+}) + M_{f_1} \|x_k-x_{k^+}\|_X) \quad\text{and}\quad\\
    \nu_k&=M_{\phi_1}(\kappa_{\rm val}(\Delta_k^2+\Delta_{k^+}^2)+\kappa_{\rm jac}\Delta_k\|x_k-x_{k^+}\|_X
      +\tfrac{1}{2}M_{f_1}\|x_k-x_{k^+}\|_X^2),
    \end{aligned}
  \]
  we arrive at the bound
  \[
    \frac{1}{t}\frac{\|p_k-\tilde{p}_k\|_X^2}{\max\{1,\|p_k-\tilde{p}_k\|_X\}}
    \le 2((1+2tc)\mu_k+\nu_k).
  \]
  Notice that $\mu_k\xrightarrow{\cK} 0$ and $\nu_k\xrightarrow{\cK} 0$.
  Therefore, $\|p_k-\tilde{p}_k\|_X\xrightarrow{\cK} 0$. As a result,
  $|H_k(t)-H_{k^+}(t)|$ converges to zero, arriving at a contradiction.  Hence,
  no such subsequence exists, implying that $H_k(t)\to 0$ and the desired
  result follows as before.
\end{proof}

\subsection{Convergence to Stationary Points}

To prove convergence to a stationary point, we leverage results of the previous
subsection.  However, we require additional regularity of the problem data.
The principle challenge is in demonstrating convergence of the proximity
operators of $\psi_k$ to the proximity operator of $\psi_{\bar{x}}$, where
$\bar{x}$ is a stationary point of \eqref{eq:optprob}.  The following
proposition postulates conditions on $F$ for which this property is valid.

\begin{proposition}\label{P:mosco}
  Let $\{x_k\}$ be the sequence of iterates generated by Algorithm~\ref{alg:tr}
  and suppose Assumption~\ref{as:data} holds.  Moreover, suppose there exists
  $\bar{x}\in X$ and a subsequence index set $\cK\subseteq\mathbb{N}$ such that
  $\{x_k\}_{\cK}$ is bounded, $h_k\xrightarrow{\cK}0$,
  $f_1'(x_k)\xrightarrow{\cK}f_1'(\bar{x})$, and
  \[
    f_1(x_k)-f_1(\bar x)-f_1'(\bar x)(x_k-\bar x)\xrightarrow{\cK}0.
  \]
  Then,
  \begin{equation}\label{eq:prox-conv}
    \prox_{t\psi_k}(x) \xrightarrow{\cK} \prox_{t\psi_{\bar{x}}}(x) \quad\forall\,x\in X, \;\;t>0.
  \end{equation}
\end{proposition}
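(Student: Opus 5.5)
The plan is to apply Lemma~\ref{L:proxDiff} directly with $\Psi_1=\psi_k$ and $\Psi_2=\psi_{\bar x}$, at the same point $z=x$ and the same $t>0$. In the notation of that lemma we take
\[
  D_1=A_k,\quad u_1=x_k,\quad d_1=b_k,\qquad D_2=f_1'(\bar x),\quad u_2=\bar x,\quad d_2=f_1(\bar x).
\]
Writing $p_k=\prox_{t\psi_k}(x)$ and $\bar p=\prox_{t\psi_{\bar x}}(x)$, the lemma yields
\[
  \frac{\|p_k-\bar p\|_X^2}{\max\{1,\|p_k-\bar p\|_X\}}
  \le 2tM_{\phi_1}\bigl(\|A_k-f_1'(\bar x)\|_{\cL(X,Y)}(1+2\|p_k-x_k\|_X)+2\|b_k-f_1(\bar x)-f_1'(\bar x)(x_k-\bar x)\|_Y\bigr).
\]
It then suffices to show that the right-hand side tends to zero along $\cK$. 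Since $s\mapsto s^2/\max\{1,s\}$ vanishes only at $s=0$ and is increasing, this forces $\|p_k-\bar p\|_X\xrightarrow{\cK}0$.

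\emph{The two consistency terms.} By the triangle inequality and \eqref{eq:inexactGrad3},
\[
  \|A_k-f_1'(\bar x)\|\le \|A_k-f_1'(x_k)\|+\|f_1'(x_k)-f_1'(\bar x)\|\le \kappa_{\rm jac}h_k+\|f_1'(x_k)-f_1'(\bar x)\|,
\]
and both pieces vanish along $\cK$ by the hypotheses $h_k\xrightarrow{\cK}0$ and $f_1'(x_k)\xrightarrow{\cK}f_1'(\bar x)$. Similarly, by \eqref{eq:inexactGrad2},
\[
  \|b_k-f_1(\bar x)-f_1'(\bar x)(x_k-\bar x)\|_Y\le \kappa_{\rm val}h_k+\|f_1(x_k)-f_1(\bar x)-f_1'(\bar x)(x_k-\bar x)\|_Y,
\]
which tends to zero by the remaining hypothesis of the proposition.

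\emph{Boundedness of $\|p_k-x_k\|_X$ along $\cK$.} This is the step needing some care. Because the Jacobian term is multiplied by $\|p_k-x_k\|_X$, a bound suffices. I would write
\[
  \|p_k-x_k\|_X\le\|p_k-\bar p\|_X+\|\bar p\|_X+\|x_k\|_X .
\]
Here $\|\bar p\|_X$ is a fixed constant and $\{x_k\}_\cK$ is bounded by assumption. The term $\|p_k-\bar p\|_X$ also appears on the left of the inequality from Lemma~\ref{L:proxDiff}, so we use the resulting self-bounding structure. Set $s_k=\|p_k-\bar p\|_X$, let $\epsilon_k$ denote the Jacobian error, and let $C$ bound $1+2(\|\bar p\|_X+\sup_\cK\|x_k\|_X)$. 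If $s_k\ge1$, the left-hand side equals $s_k$ and we obtain
\[
  s_k\le 2tM_{\phi_1}\bigl(\epsilon_k(C+2s_k)+o(1)\bigr).
\]
Once $4tM_{\phi_1}\epsilon_k<1/2$, the $s_k$ term can be absorbed on the left, giving $s_k=O(\epsilon_k C+o(1))\to0$. This contradicts $s_k\ge1$, so eventually $s_k<1$. In that regime the left-hand side is $s_k^2$ and the right-hand side tends to zero, so $s_k\xrightarrow{\cK}0$.

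An alternative route to boundedness is the prox characterization: $x-p_k\in t\partial\psi_k(p_k)$ and the subgradients of $\phi_1$ are bounded by $M_{\phi_1}$. However, $\partial\phi_0$ could be unbounded, so I prefer the absorption argument above. It handles this step cleanly and gives \eqref{eq:prox-conv} for every $x\in X$ and every $t>0$.
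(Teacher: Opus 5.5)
Your proof is correct. Its skeleton matches the paper's: the same application of Lemma~\ref{L:proxDiff} with $D_1=A_k$, $u_1=x_k$, $d_1=b_k$, $D_2=f_1'(\bar x)$, $u_2=\bar x$, $d_2=f_1(\bar x)$, and the same triangle-inequality bounds on the two consistency terms via Condition~\ref{as:inexactGrad}. Where you genuinely differ is the one delicate step, boundedness of $\|p_k-x_k\|_X$ along $\cK$.

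\emph{The paper's route.} It notes that $\phi_0$ cancels in $\psi_k-\psi_{\bar x}$, so this difference is globally Lipschitz with modulus $M_{\phi_1}(\|A_k\|_{\cL(X,Y)}+\|f_1'(\bar x)\|_{\cL(X,Y)})$. A perturbation result for strongly convex problems (Bonnans--Shapiro, Prop.~4.32) then gives
\[
\|p_k-\bar p\|_X\le tM_{\phi_1}(\|A_k\|_{\cL(X,Y)}+\|f_1'(\bar x)\|_{\cL(X,Y)})
\]
for every $k$. This is bounded on $\cK$ because $\|A_k\|_{\cL(X,Y)}$ is.

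\emph{Your route.} You bootstrap from the lemma itself. The factor $\|p_k-x_k\|_X$ is at most $s_k$ plus a constant, and it multiplies the vanishing Jacobian error $\epsilon_k$. So the $s_k$-term can be absorbed once $4tM_{\phi_1}\epsilon_k<\tfrac12$. Your case split $s_k\ge 1$ versus $s_k<1$ handles the $\max\{1,\cdot\}$ correctly.

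\emph{What each buys.} Your argument is self-contained, needing no external stability result, and it delivers boundedness and convergence in one stroke. However, it is only asymptotic and relies on $\epsilon_k\to0$. The paper's estimate is explicit and uniform in $k$, and it needs only $\sup_{\cK}\|A_k\|_{\cL(X,Y)}<\infty$.

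Your worry about the unbounded $\partial\phi_0$ in the discarded subdifferential route is likewise avoided by comparing $p_k$ with $\bar p$ rather than bounding $p_k$ directly. Monotonicity of $\partial\phi_0$ applied to the two prox inclusions removes $\phi_0$ and reproduces exactly the paper's bound.
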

\begin{proof}
  Fix $x\in X$ and $t>0$. Lemma~\ref{L:proxDiff} with $\Psi_1=\psi_{x_k}$ and
  $\Psi_2=\psi_{\bar x}$ (i.e., $D_1=A_k$, $d_1=b_k$, $u_1=x_k$,
  $D_2=f_1'(\bar x)$, $d_2=f_1(\bar x)$ and $u_2=\bar x$), combined with
  Condition~\ref{as:inexactGrad}, yields
  \[
    \left\{
    \begin{aligned}
    &\|D_1-D_2\|_{\cL(X,Y)}\le \kappa_{\rm jac} h_k + \|f_1'(x_k)-f_1'(\bar{x})\|_{\cL(X,Y)} \\
    &\|p_1-u_1\|_X=\|\prox_{t\psi_k}(x)-x_k\|_X \\
    &\|d_1-d_2-D_2(u_1-u_2)\|_Y\le \kappa_{\rm val}h_k + \|f_1(x_k)-f_1(\bar x)-f_1'(\bar x)(x_k-\bar{x})\|_Y.
    \end{aligned}
    \right.
  \]
  If $\{\|\prox_{t\psi_k}(x)-x_k\|_X\}_{\cK}$ is bounded, then our assumptions
  ensure that
  \[
    \lim_{\cK\ni k\to 0}\frac{\|\prox_{t\psi_k}(x)-\prox_{t\psi_{\bar{x}}}(x)\|_X^2}{\max\{1,\|\prox_{t\psi_k}(x)-\prox_{t\psi_{\bar{x}}}(x)\|_X\}}
    = 0,
  \]
  and the desired result holds.  As such, we will prove that
  $\{\|\prox_{t\psi_k}(x)-x_k\|_X\}_{\cK}$ is bounded.  To this end, we note
  that
  \[
    \begin{aligned}
    |(\psi_k(z_1)-\psi_{\bar x}(z_1))&-(\psi_k(z_2)-\psi_{\bar x}(z_2))|
    \le |(\phi_1(\ell_k(z_1))-\phi_1(\ell_k(z_2))| \\
       &+|(\phi_1(f_1(\bar x)+f_1'(\bar x)(z_1-\bar x))-\phi_1(f_1(\bar x)+f_1'(\bar x)(z_2-\bar x))| \\
       &\le M_{\phi_1}(\|A_k\|_{\mathcal{L}(X,Y)} + \|f_1'(\bar x)\|_{\cL(X,Y)})\|z_1-z_2\|_X
    \end{aligned}
  \]
  for all $z_1,\,z_2\in X$ and therefore
  \cite[Proposition~4.32]{JFBonnans_AShapiro_2000} ensures that
  \[
    \|\prox_{t\psi_k}(x)-\prox_{t\psi_{\bar x}}(x)\|_X \le tM_{\phi_1}(\|A_k\|_{\cL(X,Y)} + \|f_1'(\bar x)\|_{\cL(X,Y)}).
  \]
  Applying Condition~\ref{as:inexactGrad} yields
  \[
    \|A_k\|_{\cL(X,Y)} \le \kappa_{\rm jac}h_k
      + \|f_1'(x_k)-f_1'(\bar x)\|_{\cL(X,Y)}
      + \|f_1'(\bar x)\|_{\cL(X,Y)},
  \]
  which is bounded for $k\in\cK$ since $f_1'(x_k)\xrightarrow{\cK}f_1'(\bar x)$ and
  $h_k\xrightarrow{\cK}0$.  Therefore, we have that
  \[
    \|\prox_{t\psi_k}(x)\|_X \le \|\prox_{t\psi_k}(x)-\prox_{t\psi_{\bar x}}(x)\|_X + \|\prox_{t\psi_{\bar x}}(x)\|_X
  \]
  is bounded for $k\in\cK$, proving the claim.
\end{proof}

In the subsequent corollaries, we provide conditions under which the
assumptions of Proposition~\ref{P:mosco} hold.  Our first corollary
is particularly useful when $X$ is finite dimensional as it assumes
the existence of a strongly converging subsequence, while the second
corollary is tailored to PDE-constrained optimization applications
in which $F$ and $F'$ are completely continuous.  Recall that completely
continuous functions map weakly converging sequences into strongly
converging sequences \cite{emelyanov2011homotopy}.

\begin{corollary}\label{C:mosco-strong}
  Let $\{x_k\}$ be the sequence of iterates generated by Algorithm~\ref{alg:tr}
  and suppose Assumption~\ref{as:data} holds.  Moreover, suppose that
  $\bar{x}\in X$ and $\cK$ are such that $h_k\xrightarrow{\cK}0$ and
  $x_k\xrightarrow{\cK}\bar{x}$. Then \eqref{eq:prox-conv} holds.
\end{corollary}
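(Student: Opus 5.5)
The plan is to verify the hypotheses of Proposition~\ref{P:mosco} for the given subsequence $\cK$ and then invoke that proposition. The proposition requires four things along $\cK$: boundedness of $\{x_k\}_{\cK}$, $h_k\xrightarrow{\cK}0$, $f_1'(x_k)\xrightarrow{\cK}f_1'(\bar x)$, and $f_1(x_k)-f_1(\bar x)-f_1'(\bar x)(x_k-\bar x)\xrightarrow{\cK}0$. The second is assumed directly. The first is immediate, since a strongly convergent sequence is bounded.

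For the derivative condition, first observe that $\bar x$ lies in a neighborhood where Assumption~\ref{as:data} applies. Every iterate lies in $\dom{\phi_0}$: $x_1\in\dom{\phi_0}$ by the algorithm's requirement, and accepted trial points lie in $\dom{\phi_0}$ by \eqref{eq:fcd}. Since $\phi_0$ is closed, $\dom{\phi_0}$ need not be closed, so $\bar x$ is a priori only in its closure. I expect this to be the main technical subtlety. I would handle it by exploiting the fact that $f_1'$ is Lipschitz with modulus $M_{f_1}$ on $\dom{\phi_0}$. The family $\{f_1'(x_k)\}_{\cK}$ is then Cauchy in $\cL(X,Y)$, because
\[
  \|f_1'(x_k)-f_1'(x_j)\|_{\cL(X,Y)}\le M_{f_1}\|x_k-x_j\|_X .
\]
Hence it converges to some limit. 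If $\bar x\in V$, continuity of $f_1'$ on the open set $V$ (it is Fréchet differentiable there, with Lipschitz derivative near $\dom{\phi_0}$) identifies that limit as $f_1'(\bar x)$. I would read the Lipschitz assumption as holding on the open set $V$ containing $\dom{\phi_0}$, which gives continuity at $\bar x$ whenever $\bar x\in V$. If $\bar x\notin V$, the statement's use of $f_1'(\bar x)$ only makes sense when $\bar x$ is in the domain of differentiability, so I would note that this is implicitly assumed.

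For the linearization condition, I would use the mean value inequality along the segment from $\bar x$ to $x_k$. For $k$ large this segment lies in $V$, since $V$ is open, contains $\bar x$, and $x_k\to\bar x$; hence it is contained in a ball around $\bar x$ inside $V$. This gives
\[
  \|f_1(x_k)-f_1(\bar x)-f_1'(\bar x)(x_k-\bar x)\|_Y \le \sup_{s\in[0,1]}\|f_1'(\bar x+s(x_k-\bar x))-f_1'(\bar x)\|_{\cL(X,Y)}\,\|x_k-\bar x\|_X .
\]
The supremum is bounded, by continuity of $f_1'$ at $\bar x$ (or by local Lipschitz continuity on that ball). The factor $\|x_k-\bar x\|_X$ tends to zero, so the whole expression tends to zero along $\cK$.

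With all four hypotheses verified, Proposition~\ref{P:mosco} yields \eqref{eq:prox-conv}. The only delicate step is justifying continuity and differentiability of $f_1$ at the limit point $\bar x$. The remaining steps are routine consequences of strong convergence.
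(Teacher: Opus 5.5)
Your proposal is correct and follows the paper's proof, which likewise observes that strong convergence and Assumption~\ref{as:data} give boundedness of $\{x_k\}_{\cK}$, $f_1(x_k)\xrightarrow{\cK}f_1(\bar x)$ and $f_1'(x_k)\xrightarrow{\cK}f_1'(\bar x)$, and then invokes Proposition~\ref{P:mosco}. The paper silently takes $\bar x\in V$, and your caveat can be removed: $J(x_{k+1})\le J(x_k)$ for $k\ge K_\eta$, and $f_0$ and $\phi_1\circ f_1$ are bounded on bounded subsets of the convex set $\dom{\phi_0}$, so $\phi_0(x_k)$ stays bounded above along $\cK$ and lower semicontinuity gives $\bar x\in\dom{\phi_0}\subseteq V$, after which $\|f_1'(x_k)-f_1'(\bar x)\|_{\cL(X,Y)}\le M_{f_1}\|x_k-\bar x\|_X$ makes your Cauchy-sequence detour unnecessary.
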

\begin{proof}
  Assumption~\ref{as:data} and strong convergence ensure that
  $\{x_k\}_{\cK}$ is bounded,
  \begin{align*}
    f_1(x_k)\xrightarrow{\cK}f_1(\bar{x}) \quad\text{and}\quad
    f_1'(x_k)\xrightarrow{\cK}f_1'(\bar{x}).
  \end{align*}
  The result then follows from Proposition~\ref{P:mosco}.
\end{proof}
\begin{corollary}\label{C:mosco-weak}
  Let $\{x_k\}$ be the sequence of iterates generated by Algorithm~\ref{alg:tr}
  and suppose Assumption~\ref{as:data} holds.  Moreover, suppose that
  $\bar{x}\in X$ and $\cK$ are such that $h_k\xrightarrow{\cK}0$ and
  $x_k\xrightharpoonup{\cK}\bar{x}$. Finally, assume that $f_1$ and $f_1'$ are
  completely continuous. Then \eqref{eq:prox-conv} holds.
\end{corollary}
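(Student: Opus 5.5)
The plan is to reduce Corollary~\ref{C:mosco-weak} to Proposition~\ref{P:mosco} by checking its three hypotheses along $\cK$: boundedness of $\{x_k\}_{\cK}$, convergence $f_1'(x_k)\xrightarrow{\cK}f_1'(\bar x)$, and the vanishing of the linearization residual
\[
  r_k \coloneqq f_1(x_k)-f_1(\bar x)-f_1'(\bar x)(x_k-\bar x)
\]
along $\cK$. The hypothesis $h_k\xrightarrow{\cK}0$ is given directly.

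\textbf{Step 1 (boundedness).} Since $x_k\xrightharpoonup{\cK}\bar x$, the uniform boundedness principle shows that $\{x_k\}_{\cK}$ is bounded in $X$.

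\textbf{Step 2 (Jacobians).} Complete continuity of $f_1'$ (as a map $X\to\cL(X,Y)$) sends the weakly convergent sequence $\{x_k\}_{\cK}$ to a strongly convergent one. Hence $f_1'(x_k)\to f_1'(\bar x)$ in operator norm along $\cK$. Similarly, complete continuity of $f_1$ gives $f_1(x_k)\xrightarrow{\cK}f_1(\bar x)$ in $Y$.

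\textbf{Step 3 (the residual, the main obstacle).} The difficulty is the term $f_1'(\bar x)(x_k-\bar x)$. A general bounded operator maps weakly null sequences only to weakly null sequences, so Step~2 alone does not make this term vanish.

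I would first show that $f_1'(\bar x)$ is a compact operator. Take any weakly null sequence $\{d_n\}$ in $X$ and a fixed small $s>0$. Then $\bar x+sd_n\rightharpoonup\bar x$, so complete continuity gives $f_1(\bar x+sd_n)\to f_1(\bar x)$ strongly. By the mean value estimate from Assumption~\ref{as:data},
\[
  \|f_1(\bar x+sd_n)-f_1(\bar x)-sf_1'(\bar x)d_n\|_Y \le \tfrac{1}{2}M_{f_1}s^2\|d_n\|_X^2 .
\]
Dividing by $s$ and using $\sup_n\|d_n\|_X<\infty$, we get
\[
  \limsup_n\|f_1'(\bar x)d_n\|_Y \le \tfrac{1}{2}M_{f_1}s\,\sup_n\|d_n\|_X^2 .
\]
Letting $s\to0$ shows $f_1'(\bar x)d_n\to0$, which is compactness. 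This argument needs $\bar x+sd_n$ to lie in the open set $V$ where $f_1$ is smooth. If that is problematic, I would instead note that the strong limit $f_1'(x_k)\to f_1'(\bar x)$ holds for every weakly convergent sequence, and argue similarly.

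With compactness in hand, $f_1'(\bar x)(x_k-\bar x)\xrightarrow{\cK}0$. Combined with $f_1(x_k)\xrightarrow{\cK}f_1(\bar x)$ from Step~2, this gives $r_k\xrightarrow{\cK}0$.

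\textbf{Conclusion.} Steps 1--3 verify all hypotheses of Proposition~\ref{P:mosco}, which then yields \eqref{eq:prox-conv}.
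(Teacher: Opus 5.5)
Your proposal is correct and follows the paper's proof. Both arguments get boundedness from weak convergence, strong convergence of $f_1(x_k)$ and $f_1'(x_k)$ from complete continuity, and compactness of $f_1'(\bar x)$ to make $f_1'(\bar x)(x_k-\bar x)\xrightarrow{\cK}0$, and then invoke Proposition~\ref{P:mosco}. The only difference is that the paper cites \cite[Theorem~1.5.1]{emelyanov2011homotopy} for the compactness of $f_1'(\bar x)$, whereas you reprove it with the scaling-and-Taylor argument. Your argument is valid once $f_1$ is Fr\'{e}chet differentiable at $\bar x$: if $\bar x$ is not known to lie in $V$, replace the Lipschitz remainder by the $o(\|h\|_X)$ remainder. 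The paper assumes this differentiability implicitly as well.
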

\begin{proof}
  Weak convergence ensures that $\{x_k\}_{\cK}$ is bounded and the assumed
  complete continuity ensures that
  \[
    f_1(x_k)\xrightarrow{\cK}f_1(\bar{x})\quad\text{and}\quad
    f_1'(x_k)\xrightarrow{\cK}f_1'(\bar{x}).
  \]
  Moreover, $f_1'(\bar{x})$ is a completely continuous operator
  \cite[Theorem~1.5.1]{emelyanov2011homotopy} and therefore,
  $f_1'(\bar x)(x_k-\bar x)\xrightarrow{\cK}0$.
  The result then follows from Proposition~\ref{P:mosco}.
\end{proof}

Combining Corollaries~\ref{C:mosco-strong}~and~\ref{C:mosco-weak} with
Theorem~\ref{T:conv} enables us to prove convergence to stationarity points of
\eqref{eq:optprob}.  Note that a consequence of Theorem~\ref{T:conv} is that
there exists a subsequence of the stationarity measures $\{h_{k_j}\}$ that
converges to $0$.  Our first result demonstrates that strong accumulations
points are stationary.

\begin{theorem}\label{T:dual-strong}
  Let $\{x_k\}$ be the sequence of iterates generated by Algorithm~\ref{alg:tr}
  and let the assumptions of Theorem~\ref{T:conv} hold. Moreover, suppose
  there exists $\bar{x}\in X$ and a subsequence index set
  $\cK\subseteq\mathbb{N}$ such that
  $h_k\xrightarrow{\cK}0$ and $x_k\xrightarrow{\cK}\bar{x}$.  If there exists
  $t_{\max}>0$ with $t_k\le t_{\max}$ for all $k$, then $\bar{x}$ is a
  stationary point of \eqref{eq:optprob}.  In particular, \eqref{eq:statpt}
  holds for all $t>0$.
\end{theorem}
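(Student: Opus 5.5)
The plan is to pass to the limit in the fixed-point characterization of Theorem~\ref{T:statpt}. We show that $\bar x=\prox_{t\psi_{\bar x}}(\bar x-t\nabla f_0(\bar x))$ for every $t>0$; that theorem then gives \eqref{eq:statpt0}, and hence \eqref{eq:statpt} for all $t>0$. Fix $t>0$. Since $h_k\xrightarrow{\cK}0$ and $x_k\xrightarrow{\cK}\bar x$, Corollary~\ref{C:mosco-strong} applies and yields the pointwise convergence \eqref{eq:prox-conv}: $\prox_{t\psi_k}(x)\xrightarrow{\cK}\prox_{t\psi_{\bar x}}(x)$ for each fixed $x\in X$.

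Next we show $H_k(t)\xrightarrow{\cK}0$ for the fixed $t$. This follows the end of the proof of Corollary~\ref{C:conv1}, using monotonicity of $t\mapsto H_k(t)$ \cite[Lemma~2]{baraldi2022proximal}. If $t\ge t_{\max}\ge t_k$, then $H_k(t)\le H_k(t_k)=h_k$. If $t<t_{\max}$, we use the companion monotonicity fact that $t\mapsto tH_k(t)$ is nondecreasing. This gives $H_k(t)\le \tfrac{t_k}{t}h_k\le\tfrac{t_{\max}}{t}h_k$. In both cases $H_k(t)\xrightarrow{\cK}0$. Note that only the upper bound $t_{\max}$ is needed, which matches the hypotheses.

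Now split the quantity of interest with the triangle inequality:
\[
\|\bar x-\prox_{t\psi_{\bar x}}(\bar x-t\nabla f_0(\bar x))\|_X
\le \|\bar x-x_k\|_X + tH_k(t) + \|\prox_{t\psi_k}(x_k-tg_k)-\prox_{t\psi_k}(\bar x-t\nabla f_0(\bar x))\|_X + \|\prox_{t\psi_k}(z)-\prox_{t\psi_{\bar x}}(z)\|_X,
\]
where $z\coloneqq\bar x-t\nabla f_0(\bar x)$. The three remaining terms are handled as follows.

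\textbf{First term.} It vanishes along $\cK$ by assumption.

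\textbf{Third term.} By nonexpansivity of $\prox_{t\psi_k}$ it is bounded by $\|x_k-\bar x\|_X+t\|g_k-\nabla f_0(x_k)\|_X+t\|\nabla f_0(x_k)-\nabla f_0(\bar x)\|_X$. This tends to zero by Condition~\ref{as:inexactGrad}, since $\|g_k-\nabla f_0(x_k)\|_X\le\kappa_{\rm grad}h_k$, and by Lipschitz continuity of $\nabla f_0$ from Assumption~\ref{as:data}.

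\textbf{Last term.} It vanishes by \eqref{eq:prox-conv} at the fixed point $z$.

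Letting $\cK\ni k\to\infty$ gives the fixed-point identity. Because $t>0$ was arbitrary, Theorem~\ref{T:statpt} finishes the proof.

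The main obstacle is the interplay between the moving proximity operators $\prox_{t\psi_k}$ and the moving arguments. Corollary~\ref{C:mosco-strong} only gives pointwise convergence at a fixed argument, which is exactly why the argument is decoupled through nonexpansivity before \eqref{eq:prox-conv} is applied. One technical point to check is that $\bar x\in\dom{\phi_0}$, so that $\nabla f_0(\bar x)$ is defined via the open set $U$. This should follow because $p_k\in\dom{\phi_0}$ with $\|p_k-x_k\|_X\le t_kh_k\to0$, and $\dom{\phi_0}$ has closed closure contained in $U$. At worst, one argues that $x_k\to\bar x$ with $x_k\in\dom{\phi_0}$ and uses closedness of $\phi_0$ together with the open neighborhood $U$.
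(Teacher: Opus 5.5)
Your argument is essentially the paper's proof: Corollary~\ref{C:mosco-strong} gives pointwise convergence of $\prox_{t\psi_k}$, nonexpansivity moves the argument from $x_k-tg_k$ to $\bar x-t\nabla f_0(\bar x)$, $H_k(t)\xrightarrow{\cK}0$ closes the gap, and Theorem~\ref{T:statpt} finishes. Your explicit monotonicity argument for $H_k(t)\xrightarrow{\cK}0$ is more careful than the paper's appeal to Theorem~\ref{T:conv}, but the intermediate bound $H_k(t)\le\frac{t_k}{t}h_k$ only holds for $t\le t_k$, so state it as $H_k(t)\le\max\{1,t_{\max}/t\}h_k$; also, $\overline{\dom{\phi_0}}\subseteq U$ in your closing remark is not implied by Assumption~\ref{as:data} (the paper tacitly assumes $\bar x\in U\cap V$ too), and a clean fix is that $J(x_k)$ is eventually nonincreasing while $f_0$ and $\phi_1\circ f_1$ stay bounded along the bounded sequence $\{x_k\}_{\cK}$ in the convex set $\dom{\phi_0}$, so $\phi_0(x_k)$ is bounded above and lower semicontinuity gives $\bar x\in\dom{\phi_0}$.
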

\begin{proof}
  By Corollary~\ref{C:mosco-strong}, \eqref{eq:prox-conv} holds. Using
  Condition~\ref{as:inexactGrad}, we have that
  \[
    x_k-tg_k\xrightarrow{\cK}\bar{x}-t\nabla f_0(\bar{x})
  \]
  for fixed $t>0$ and so the nonexspansivity of the proximity
  operator ensures that
  \[
    \begin{aligned}
    \|\prox_{t\psi_k}(x_k-tg_k)-&\prox_{t\psi_{\bar x}}(\bar{x}-t\nabla f_0(\bar{x}))\|_X \\
    \le& \|(x_k-tg_k)-(\bar{x}-t\nabla f_0(\bar{x}))\|_X \\
       &+\|\prox_{t\psi_k}(\bar{x}-t\nabla f_0(\bar{x}))-\prox_{t\psi_{\bar x}}(\bar{x}-t\nabla f_0(\bar{x}))\|_X.
    \end{aligned}
  \]
  In particular, we have that
  \[
    \prox_{t\psi_k}(x_k-tg_k)\xrightarrow{\cK}\prox_{t\psi_{\bar x}}(\bar{x}-t\nabla f_0(\bar{x})).
  \]
  Now, Theorem~\ref{T:conv} ensures that
  \[
    H_k(t)=\|\prox_{t\psi_k}(x_k-tg_k)-x_k\|_X\xrightarrow{\cK} 0
  \]
  and therefore $\bar{x}=\prox_{t\psi_{\bar x}}(\bar{x}-t\nabla f_0(\bar{x}))$.
  The result then follows from Theorem~\ref{T:statpt}.
\end{proof}

Theorem~\ref{T:dual-strong} is a useful result when $X$ is finite dimensional.
However, strong convergence is difficult to guarantee in infinite-dimensional
Hilbert spaces where bounded sequences are only guaranteed to admit weak
accumulation points.  As demonstrated by the following theorem, this setting
requires additional assumptions on $f_0$ and $f_1$.

\begin{theorem}\label{T:strong-convex}
  Let $\{x_k\}$ be the sequence of iterates generated by Algorithm~\ref{alg:tr}
  and let the assumptions of Corollary~\ref{C:conv1} hold.  Moreover, suppose
  there exists $\bar{x}\in X$ and a subsequence index set
  $\cK\subseteq\mathbb{N}$ such
  that $h_k\xrightarrow{\cK}0$ and $x_k\xrightharpoonup{\cK}\bar{x}$.
  Finally, suppose that $f_0$ is strongly convex and $f_1$ and $f_1'$ are
  completely continuous. If $\bar{x}$ is a stationary point for
  \eqref{eq:optprob}, then $x_k\xrightarrow{\cK}\bar{x}$.
\end{theorem}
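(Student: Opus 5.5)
Let $p_k\coloneqq\prox_{t\psi_k}(x_k-tg_k)$ for a fixed $t>0$, and write $\bar p\coloneqq\prox_{t\psi_{\bar x}}(\bar x-t\nabla f_0(\bar x))=\bar x$, where the equality is Theorem~\ref{T:statpt} applied to the stationary point $\bar x$. The plan is to combine strong monotonicity of $\nabla f_0$, monotonicity of the subdifferentials, and the complete continuity of $f_1$, $f_1'$, so that every term except $\mu\|x_k-\bar x\|_X^2$ is forced to zero along $\cK$. Here $\mu>0$ is the strong convexity modulus of $f_0$.

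\textbf{Step 1: auxiliary convergences.} Theorem~\ref{T:conv-lim} is not available without a bound on $B_k$, so I work with what the hypotheses give. Since $h_k\xrightarrow{\cK}0$ and $t_k\in[t_{\min},t_{\max}]$, the monotonicity argument in the proof of Corollary~\ref{C:conv1} gives $H_k(t)\xrightarrow{\cK}0$, i.e.\ $\|p_k-x_k\|_X\xrightarrow{\cK}0$. Condition~\ref{as:inexactGrad} then gives $g_k-\nabla f_0(x_k)\to0$, $A_k-f_1'(x_k)\to0$ and $b_k-f_1(x_k)\to0$ along $\cK$. By complete continuity, $f_1(x_k)\to f_1(\bar x)$ and $f_1'(x_k)\to f_1'(\bar x)$ strongly along $\cK$. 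Hence $A_k\to f_1'(\bar x)$ in operator norm, and $\ell_k(p_k)=A_k(p_k-x_k)+b_k\to f_1(\bar x)$, because $p_k-x_k\to0$ and $\{A_k\}_\cK$ is bounded.

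\textbf{Step 2: the monotonicity inequality.} The optimality condition for $p_k$ provides some $\theta_k\in\partial\phi_1(\ell_k(p_k))$ with
\[
\xi_k\coloneqq\tfrac1t(x_k-p_k)-g_k-A_k^*\theta_k\in\partial\phi_0(p_k).
\]
At $\bar x$, \eqref{eq:statpt0} provides $\bar\theta\in\partial\phi_1(f_1(\bar x))$ with $\bar\xi\coloneqq-\nabla f_0(\bar x)-f_1'(\bar x)^*\bar\theta\in\partial\phi_0(\bar x)$. Monotonicity of $\partial\phi_0$ gives $(\xi_k-\bar\xi,p_k-\bar x)_X\ge0$. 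Expanding this and inserting $\nabla f_0(x_k)$ yields
\[
(\nabla f_0(x_k)-\nabla f_0(\bar x),p_k-\bar x)_X\le \tfrac1t(x_k-p_k,p_k-\bar x)_X+(\nabla f_0(x_k)-g_k,p_k-\bar x)_X-(\theta_k,A_k(p_k-\bar x))_Y+(\bar\theta,f_1'(\bar x)(p_k-\bar x))_Y .
\]
The left-hand side differs from $(\nabla f_0(x_k)-\nabla f_0(\bar x),x_k-\bar x)_X\ge\mu\|x_k-\bar x\|_X^2$ by at most $M_{f_0}\|x_k-\bar x\|_X\|p_k-x_k\|_X\to0$, using boundedness of weakly convergent sequences. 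On the right-hand side, the first two terms vanish by Step~1 and boundedness.

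\textbf{Step 3: the $\phi_1$ terms (the main obstacle).} The difficulty is that $\theta_k$ and $\bar\theta$ are only bounded by $M_{\phi_1}$, while $p_k-\bar x$ converges merely weakly. I would split the remaining terms as
\[
-(\theta_k,A_k(p_k-\bar x))_Y+(\bar\theta,f_1'(\bar x)(p_k-\bar x))_Y
=-(\theta_k-\bar\theta,f_1'(\bar x)(p_k-\bar x))_Y-(\theta_k,(A_k-f_1'(\bar x))(p_k-\bar x))_Y .
\]
The second piece tends to zero by operator-norm convergence and boundedness. For the first piece, $f_1'(\bar x)$ is completely continuous (Corollary~\ref{C:mosco-weak}) and $p_k\rightharpoonup\bar x$ along $\cK$, so $f_1'(\bar x)(p_k-\bar x)\to0$ strongly; since $\|\theta_k-\bar\theta\|_Y\le2M_{\phi_1}$, this piece also vanishes.

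Alternatively, one could use the monotonicity of $\partial\phi_1$, together with $\ell_k(p_k)-f_1(\bar x)=f_1'(\bar x)(p_k-\bar x)+o(1)$, to obtain a favorable sign. Either route gives $\mu\limsup_{\cK}\|x_k-\bar x\|_X^2\le0$, hence $x_k\xrightarrow{\cK}\bar x$.
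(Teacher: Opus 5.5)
Your argument is correct, but it takes a different route from the paper's proof.

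\textbf{The paper's route.} The paper never opens up the optimality condition of the prox. It introduces the residual map $G_k(x,t)=\tfrac1t\bigl(x-\prox_{t\psi_k}(x-t\nabla f_0(x))\bigr)$ and imports from \cite{baraldi2024local} that $G_k(\cdot,t)$ is strongly monotone for sufficiently small $t$. After Cauchy--Schwarz, a positive multiple of $\|x_k-\bar x\|_X$ is bounded by $\|G_k(x_k,t)\|_X+\|G_k(\bar x,t)\|_X$.
\begin{itemize}
\item The first term vanishes along $\cK$ because $h_k\to0$ there.
\item The second term tends to $\tfrac1t\|\bar x-\prox_{t\psi_{\bar x}}(\bar x-t\nabla f_0(\bar x))\|_X$ by the prox convergence of Corollary~\ref{C:mosco-weak}. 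That corollary rests on Proposition~\ref{P:mosco} and Lemma~\ref{L:proxDiff}. The limit is zero by stationarity.
\end{itemize}
So the paper compares $x_k$ and $\bar x$ under the same model $\psi_k$, and pushes the model mismatch into a prox-perturbation estimate at the fixed point $\bar x-t\nabla f_0(\bar x)$.

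\textbf{Your route.} You compare $p_k$, which is optimal for $\psi_k$, directly with $\bar x$, which is stationary for the true problem, at the level of subgradients. The common term $\phi_0$ is absorbed by monotonicity of $\partial\phi_0$. The differing $\phi_1$-parts are controlled by three ingredients: the uniform bound $M_{\phi_1}$ on the multipliers, norm convergence of $A_k$, and compactness of $f_1'(\bar x)$ acting on the weakly null sequence $p_k-\bar x$.

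\textbf{What each approach buys.} Your argument is self-contained: it needs no external strong-monotonicity lemma and neither Lemma~\ref{L:proxDiff} nor Proposition~\ref{P:mosco}. It places no restriction on $t$, and it handles the inexact gradient $g_k$ as an explicit additive error. It does rely on the subdifferential sum and chain rules at $p_k$. These are valid because $\phi_1$ is finite and continuous, and the paper uses the same rules in Theorem~\ref{T:statpt}. The paper's proof is shorter given its existing machinery and works purely with proximity operators, without explicit multipliers.

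\textbf{Your alternative via monotonicity of $\partial\phi_1$.} This is worth keeping. It bounds the cross term by $2M_{\phi_1}\|\ell_k(p_k)-f_1(\bar x)-f_1'(\bar x)(p_k-\bar x)\|_Y$. That quantity vanishes under exactly the hypotheses of Proposition~\ref{P:mosco}, namely $f_1'(x_k)\to f_1'(\bar x)$ and $f_1(x_k)-f_1(\bar x)-f_1'(\bar x)(x_k-\bar x)\to0$. It therefore avoids a separate appeal to compactness of $f_1'(\bar x)$ and matches the generality of the paper's argument.
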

\begin{proof}
  Let $\omega>0$ denote the strong convexity modulus of $f$.
  To simplify the presentation, define
  \[
    G_k(x,t) \coloneqq \tfrac{1}{t}(x-\prox_{t\psi_k}(x - t\nabla f_0(x))).
  \]
  By \cite[Lemma~2]{baraldi2024local}, we have that $G_k(\cdot,t)$ is a
  strongly monotone operator for all $t\in(0,2\omega M_{f_0}^{-2})$, which
  yields
  \[
    (\omega-\tfrac{1}{2}tM_{f_0}^{-2})\|x_k-\bar{x}\|_X^2
    \le (G_k(x_k,t)-G_k(\bar{x},t),x_k-\bar{x})_X.
  \]
  Applying the Cauchy-Schwarz and triangle inequalities, we obtain
  \[
    (\omega-\tfrac{1}{2}tM_{f_0}^{-2})\|x_k-\bar{x}\|_X \le \|G_k(x_k,t)\|_X + \|G_k(\bar{x},t)\|_X.
  \]
  By Corollary~\ref{C:conv1} and Condition~\ref{as:inexactGrad}, we have that
  \[
    \lim_{\cK\ni k\to\infty} \|G_k(x_k,t)\|_X \le \lim_{\cK\ni k\to\infty} (\kappa_{\rm grad}+1)H_k(t) = 0
  \]
  and by Corollary~\ref{C:mosco-weak}, in particular \eqref{eq:prox-conv}, we
  have that
  \[
    \lim_{\cK\ni k\to\infty} \|G_k(\bar{x},t)\|_X = \tfrac{1}{t}\|\bar{x}-\prox_{t\psi_{\bar x}}(\bar{x}-t\nabla f_0(\bar{x}))\|_X.
  \]
  Since $\bar{x}$ is stationary, Theorem~\ref{T:statpt} ensures
  that $\bar{x}=\prox_{t\psi_{\bar x}}(\bar{x}-t\nabla f_0(\bar{x}))$ and
  therefore
  \[
    \lim_{\cK\ni k\to\infty} \|x_k-\bar{x}\|_X = 0,
  \]
  proving the desired result.
\end{proof}

Theorem~\ref{T:strong-convex} demonstrates strong convergence when the weak
limit of $\{x_k\}_{\cK}$ is a stationary point.  When $f$ includes a Tikhonov
regularization term, we can prove that this assumption is satisfied.

\begin{theorem}\label{T:dual-weak}
  Let the assumptions of Corollary~\ref{C:conv1} hold and assume that
  \[
    f_0(x) = \frac{\tau}{2}\|x\|_X^2 + \hat{f}_0(x),
  \]
  where $\tau>0$, $\hat{f}_0:X\to\real$ is Fr\'{e}chet differentiable, and
  $\nabla\hat{f}_0$, $f_1$ and $f_1'$ are completely continuous. Moreover,
  suppose there exists $\bar{x}\in X$ and a subsequence index set
  $\cK\subseteq\mathbb{N}$ such that
  $h_k\xrightarrow{\cK}0$ and $x_k\xrightharpoonup{\cK}\bar{x}$.  Then,
  $\bar{x}$ is a stationary point of \eqref{eq:optprob}.  In addition, if
  $\hat{f}_0$ is $\tau_0$-weakly convex with $\tau_0\in(0,\tau)$, i.e.,
  \[
    x\mapsto\frac{\tau_0}{2}\|x\|_X^2 + \hat{f}_0(x)
  \]
  is convex, then $x_k\xrightarrow{\cK}\bar{x}$.
\end{theorem}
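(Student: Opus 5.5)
The plan is to prove stationarity first and then derive strong convergence from Theorem~\ref{T:strong-convex}, with the Tikhonov term playing the role of the strongly convex part.

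\textbf{Stationarity.} Fix $t>0$ and put $p_k\coloneqq\prox_{t\psi_k}(x_k-tg_k)$. Corollary~\ref{C:conv1} gives $\hat H_k(t)\to0$, and under $h_k\xrightarrow{\cK}0$ the same argument gives $H_k(t)\xrightarrow{\cK}0$. Hence $p_k-x_k\xrightarrow{\cK}0$, so $p_k\xrightharpoonup{\cK}\bar x$.

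By Corollary~\ref{C:mosco-weak}, \eqref{eq:prox-conv} holds. The difficulty is that the argument $x_k-tg_k$ converges only weakly, and prox operators are not weakly continuous. The Tikhonov term resolves this. Write
\[
  x_k-tg_k=(1-t\tau)x_k-t\nabla\hat f_0(x_k)+t(\nabla f_0(x_k)-g_k).
\]
The last term tends to zero by Condition~\ref{as:inexactGrad}. Complete continuity gives $\nabla\hat f_0(x_k)\xrightarrow{\cK}\nabla\hat f_0(\bar x)$. Choosing $t=1/\tau$ kills the weakly convergent part. Therefore
\[
  x_k-tg_k\xrightarrow{\cK}\bar x-t\nabla\hat f_0(\bar x)-\bar x=-t\nabla\hat f_0(\bar x)
\]
strongly.

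Nonexpansivity of $\prox_{t\psi_k}$ together with \eqref{eq:prox-conv}, applied exactly as in the proof of Theorem~\ref{T:dual-strong}, then gives
\[
  p_k\xrightarrow{\cK}\bar p\coloneqq\prox_{t\psi_{\bar x}}(-t\nabla\hat f_0(\bar x))
\]
strongly. Combined with $p_k-x_k\to0$, this yields $x_k\to\bar p$ strongly, and uniqueness of weak limits gives $\bar p=\bar x$. Since $-t\nabla\hat f_0(\bar x)=\bar x-t\nabla f_0(\bar x)$ when $t=1/\tau$, this is exactly \eqref{eq:linprox} for $t=1/\tau$. Theorem~\ref{T:statpt} (equivalence for arbitrary $t$) then shows that $\bar x$ is stationary.

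\textbf{Strong convergence.} The argument above already yields $x_k\xrightarrow{\cK}\bar x$, without weak convexity. The weak convexity hypothesis instead makes $f_0$ strongly convex with modulus $\tau-\tau_0>0$. Thus all hypotheses of Theorem~\ref{T:strong-convex} hold (complete continuity of $f_1,f_1'$ and stationarity of $\bar x$), and it gives $x_k\xrightarrow{\cK}\bar x$ directly. I would present both routes, or the Theorem~\ref{T:strong-convex} route to match the statement.

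\textbf{Main obstacle.} The main obstacle is the passage from weak to strong convergence of the prox argument. This step hinges on the choice $t=1/\tau$, which cancels $x_k$. I must also check that $H_k(1/\tau)\xrightarrow{\cK}0$ along $\cK$ itself, not just in $\liminf$. This follows from monotonicity of $t\mapsto H_k(t)$, because $H_k(t)\le\max\{1,t_{\max}/t\}\,h_k$ and $h_k\xrightarrow{\cK}0$.
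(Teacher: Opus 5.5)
Your proposal is correct and is essentially the paper's proof. The shared steps are: $t=1/\tau$ makes $x_k-tg_k$ converge strongly along $\cK$; Corollary~\ref{C:mosco-weak} plus nonexpansivity then gives strong convergence of $\prox_{t\psi_k}(x_k-tg_k)$; and $H_k(t)\xrightarrow{\cK}0$ follows from monotonicity together with $t_k\le t_{\max}$. You argue that last step more carefully than the paper, which appeals to Theorem~\ref{T:conv} even though it only gives a $\liminf$. The same caveat applies to Corollary~\ref{C:conv1}, so your opening claim $\hat H_k(t)\to0$ is overstated, though you never use it.

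You identify the limit by uniqueness of weak limits rather than by weak lower semicontinuity of the norm. You also correctly observe that this already gives $x_k\xrightarrow{\cK}\bar x$ without the weak convexity hypothesis, so the appeal to Theorem~\ref{T:strong-convex} is not needed.
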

\begin{proof}
  Set $t=\frac{1}{\tau}$. Using Condition~\ref{as:inexactGrad} and
  the assumed form of $f$, we have that
  \[
   \begin{aligned}
   \|(x_k-tg_k)-&(\bar{x}-t\nabla f_0(\bar{x}))\|_X \\
   &\le t\|g_k-\nabla f_0(x_k)\|_X + \|(x_k-t\nabla f_0(x_k))-(\bar{x}-t\nabla f_0(\bar{x})\|_X \\
   &= t\|g_k-\nabla f_0(x_k)\|_X + t\|\nabla\hat{f}_0(x_k) - \nabla\hat{f}_0(\bar{x})\|_X \\
   &\le t\kappa_{\rm grad}h_k + t\|\nabla\hat{f}_0(x_k) - \nabla\hat{f}_0(\bar{x})\|_X
   \end{aligned}
  \]
  and hence
  \[
    x_k-tg_k\xrightarrow{\cK}\bar{x}-t\nabla f_0(\bar{x}).
  \]
  Therefore, Corollary~\ref{C:mosco-weak} and the nonexspansivity of the
  proximity operator yield
  \[
    \prox_{t\psi_k}(x_k-tg_k)\xrightarrow{\cK}\prox_{t\psi_{\bar x}}(\bar{x}-t\nabla f_0(\bar{x})).
  \]
  Now, Theorem~\ref{T:conv} ensures that
  \[
    \|\prox_{t\psi_k}(x_k-tg_k)-x_k\|_X\xrightarrow{\cK} 0.
  \]
  Therefore, the weak lower semicontinuity of the norm yields
  \[
    0 = \liminf_{\cK\ni k\to\infty} \|\prox_{t\psi_k}(x_k-tg_k)-x_k\|_X \ge \|\prox_{t\psi_{\bar x}}(\bar{x}-t\nabla f_0(\bar{x}))-\bar{x}\|_X
  \]
  and so $\bar{x}=\prox_{t\psi_{\bar x}}(\bar{x}-t\nabla f_0(\bar{x}))$,
  implying stationarity of $\bar x$. To conclude, if $\hat{f}_0$ is
  $\tau_0$-weakly convex with $\tau_0<\tau$, then $f_0$ is strongly convex and
  strong convergence follows from Theorem~\ref{T:strong-convex}.
\end{proof}

Our final convergence result builds on Theorem~\ref{T:dual-weak}, proving that
when the Hessian sequence $\{B_k\}$ is bounded and $J$ is convex, the entire
sequence of iterates $\{x_k\}$ converges to a stationary point of
\eqref{eq:optprob}.

\begin{corollary}\label{C:conv-lim-strong}
  Let the assumptions of Theorems~\ref{T:conv-lim}~and~\ref{T:dual-weak} hold.
  Moreover, suppose $J$ is convex and admits a unique minimizer $\bar{x}\in X$.
  If $\{x_k\}$ is bounded, then $x_k\to\bar{x}$.
\end{corollary}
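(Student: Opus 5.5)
We plan a standard subsequence argument that reduces the claim to the earlier results. Since $\{x_k\}$ is bounded in the Hilbert space $X$, every subsequence of $\{x_k\}$ has a further subsequence converging weakly to some point. It therefore suffices to show that every weakly convergent subsequence in fact converges \emph{strongly} to the unique minimizer $\bar{x}$. Indeed, if $x_k\not\to\bar{x}$, there would be $\epsilon>0$ and a subsequence staying $\epsilon$-away from $\bar{x}$. Extracting a weakly convergent sub-subsequence from it and applying the claim gives strong convergence to $\bar{x}$, which is a contradiction.

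So fix a subsequence $\cK$ with $x_k\xrightharpoonup{\cK}x^\star$ for some $x^\star\in X$. Theorem~\ref{T:conv-lim} holds because $\|B_k\|_{\cL(X)}\le\kappa_{\rm curv}$, and it gives $h_k\to0$ along the \emph{whole} sequence, in particular $h_k\xrightarrow{\cK}0$. This is the point where the full limit, rather than a lower limit, is essential: it lets us apply the stationarity result along an arbitrary weakly convergent subsequence rather than a specially chosen one. Theorem~\ref{T:dual-weak} then applies with $\bar{x}$ replaced by $x^\star$, and it shows that $x^\star$ is a stationary point of \eqref{eq:optprob}. Under the weak convexity hypothesis on $\hat f_0$ with $\tau_0<\tau$, the same theorem upgrades the convergence to $x_k\xrightarrow{\cK}x^\star$ strongly. (If that hypothesis is not part of the intended assumptions, strong convergence would instead have to come from Theorem~\ref{T:strong-convex} once stationarity is known, which again needs $f_0$ strongly convex. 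I would read ``the assumptions of Theorem~\ref{T:dual-weak}'' as including the weak-convexity clause.)

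It remains to identify $x^\star$ with $\bar{x}$, and this is the step I expect to need the most care. Since $J$ is convex, I will show that stationarity in the sense of \eqref{eq:statpt0} implies global optimality. Condition \eqref{eq:statpt0} says $0\in\nabla f_0(x^\star)+f_1'(x^\star)^*\partial\phi_1(f_1(x^\star))+\partial\phi_0(x^\star)$. By \eqref{eq:Jsubdiff} this set equals $\partial_C J(x^\star)$. For a convex, locally Lipschitz-plus-convex function, the Clarke subdifferential coincides with the convex subdifferential (Clarke, Proposition~2.2.7, extended to the $\phi_0$ term as in the derivation of \eqref{eq:Jsubdiff}). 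Hence $0\in\partial J(x^\star)$ in the convex sense, so $x^\star$ minimizes $J$.

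Alternatively, and more robustly, I would argue directly without identifying the two subdifferentials. Take $\theta^\star\in\partial\phi_1(f_1(x^\star))$ from \eqref{eq:statpt0}; then $x^\star$ is stationary for the auxiliary problem \eqref{eq:lin}, by Theorem~\ref{T:statpt}. The one-sided directional derivative of the convex function $J$ at $x^\star$ in any direction $d$ is bounded below by the support function of $\partial_C J(x^\star)$ at $d$, and this support function is nonnegative because $0\in\partial_C J(x^\star)$. So the convex function $J$ has nonnegative directional derivatives at $x^\star$, which makes $x^\star$ a global minimizer. By uniqueness of the minimizer, $x^\star=\bar{x}$. Combined with the subsequence argument in the first paragraph, this yields $x_k\to\bar{x}$.
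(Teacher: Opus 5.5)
Your proposal is correct and follows the paper's route: bounded $\Rightarrow$ weak cluster points, $h_k\to0$ along every subsequence by Theorem~\ref{T:conv-lim}, stationarity and strong subsequential convergence from Theorem~\ref{T:dual-weak} (reading in its weak-convexity clause, which the paper also needs since it invokes Theorem~\ref{T:strong-convex}, though it cites that theorem for stationarity, which properly comes from Theorem~\ref{T:dual-weak} as you have it), convexity of $J$ for optimality, and uniqueness plus the subsequence principle to get strong convergence of the whole sequence. One small caution: in your ``more robust'' alternative, $J'(x^\star;d)\ge\sigma_{\partial_C J(x^\star)}(d)$ is not a general Clarke fact (in general $J'(x;d)\le\sigma_{\partial_C J(x)}(d)$) and holds only because $J$ is convex, so it does not avoid identifying $\partial_C J$ with $\partial J$; your first justification, or directly bounding $J'(x^\star;d)\ge(\nabla f_0(x^\star)+f_1'(x^\star)^*\theta^\star+v,d)_X=0$ with $v\in\partial\phi_0(x^\star)$ and then using convexity along the segment, is the cleaner route.
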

\begin{proof}
  Let $\tilde{x}$ be any weak accumulation point of $\{x_k\}$, which exists since
  $\{x_k\}$ is bounded, and let $\cK\subseteq\mathbb{N}$ be a subsequence index
  set on which $x_k\xrightharpoonup{\cK}\tilde{x}$.  By Theorem~\ref{T:conv-lim},
  $h_k\xrightarrow{\cK}0$ and by Theorem~\ref{T:strong-convex}, we have that
  $x_k\xrightarrow{\cK}\tilde{x}$ and $\tilde{x}$ is a stationary point	of
  \eqref{eq:optprob}.  Convexity of $J$ ensures that the stationarity
  conditions \eqref{eq:statpt} are necessary and sufficient for optimality, so
  $\tilde{x}=\bar{x}$.  Furthermore, boundedness of $\{x_k\}$ ensures that
  every subsequence has a further weakly converging subsequence with limit
  $\bar{x}$.  Consequently, $x_k\to\bar{x}$.
\end{proof}

\section{Numerics}\label{sec:num}
In this section, we demonstrate the numerical performance of
Algorithm~\ref{alg:tr} on various examples from PDE-constrained optimization.
Throughout, $D\subset\real^d$, $d=1,2$, is the physical domain with boundary
$\partial D$ and $(\Omega,\mathcal{F},\mathbb{P})$ is a probability space with
set of outcomes $\Omega$, $\sigma$-algebra of events
$\mathcal{F}\subseteq 2^\Omega$ and probability measure
$\mathbb{P}:\mathcal{F}\to[0,1]$.  The optimization space in all examples is
$X=L^2(D)$, while the space $Y$ in the first example is
$Y=L^2(\Omega,\mathcal{F},\mathbb{P})$ and $Y=\real$ in the second.
In both examples, $f_1$ is the composition of a function $\hat{f}_1$ with
the PDE solution map $S(x)$ at $x\in X$.  In order to evaluate the Jacobian
of $f_1=\hat{f}_1\circ S$, we employ the adjoint approach
\cite[Section~1.6.2]{MHinze_RPinnau_MUlbrich_SUlbrich_2009}.  Furthermore, we
choose $B_k=\nabla^2 L(x_k,\theta_k)$ and apply the Hessian of $f_1$ as in
\cite[Section~1.6.5]{MHinze_RPinnau_MUlbrich_SUlbrich_2009}.  Recall that each
gradient computation requires a linear adjoint PDE solve, while each Hessian
application requires two additional linearized PDE solves.

For our numerical results, we employ the truncated CG subproblem
solver developed in \cite[Algorithm~4]{baraldi2024efficient} with the number of
iterations capped at 15, lower and upper safeguards $10^{-12}$ and $10^{12}$,
respectively, descent parameter $10^{-4}$, and relative and absolute tolerances
$10^{-2}$ and $10^{-4}$, respectively. In the notation of
\cite[Algorithm~4]{baraldi2024efficient}, these parameters are {\tt maxit},
$t_{\min}$, $t_{\max}$, $\eta$, $\tau_k$ and $\bar\tau$, respectively.
Note that this choice of subproblem solver ensures that the Cauchy point
step lengths $t_k$ satisfy $10^{-12}=t_{\min} \le t_k \le t_{\max}=10^{12}$ for
all $k$. In addition, we use Algorithm~\ref{alg:prox-phik} to compute the
subproblem proximity operator. For this, we set $\lambda_0=1$,
$\gamma_{\min}=10^{-6}$, $\gamma_{\max}=10^6$, $a_{\min}=1$ (i.e., monotonic
line search), $\sigma_1=0.1$, $\sigma_2=0.9$, $\alpha=10^{-4}$,
and $\beta=0.5$.  We terminate Algorithm~\ref{alg:prox-phik} when
\begin{equation}\label{eq:alg2-tol}
  \frac{\|s^{(n)}\|_Y}{\gamma^{(n)}} \le 10^{-10}.
\end{equation}
For Algorithm~\ref{alg:tr}, we set $\Delta_1=10$, $\eta_1=10^{-4}$,
$\eta_2=0.5$, $\gamma_1=\gamma_2=0.25$, and $\gamma_3=10$.  Finally, we
employ the initial guess $x_1=0$ and terminate Algorithm~\ref{alg:tr} when
\begin{equation}\label{eq:stopcond}
  h_k \le 10^{-8}.
\end{equation}
A consequence of Theorem~\ref{T:conv} is that \eqref{eq:stopcond} is guaranteed
to be satisfied after finitely many iterations.

\subsection{Risk-Averse Control of Burger's Equation}
\label{ss:burgers}

The goal of this example is to compare our approach with the
primal-dual risk minimization algorithm \cite{kouri2022primal}
and to demonstrate its ability to control inexact PDE solves via
Conditions~\ref{as:inexactGrad} and \ref{as:inexactObj}.
Let $D=(0,1)$ and $\tau>0$, and consider the optimization problem
\begin{equation}
  \min_{z\in L^2(D)}\; \risk\left(\tfrac{1}{2}\|S(z)-1\|_{L^2(D)}^2\right) + \tfrac{\tau}{2}\|z\|_{L^2(D)}^2,
\end{equation}
where $u=S(z):\Omega\to H^1(D)$ solves the weak form of Burger's equation
\begin{subequations}\label{eq:burgers}
\begin{align}
  -\nu(\omega)\partial_{xx} u(\omega) + u(\omega)\partial_x u(\omega) &= f(\omega)+z &&\text{in $D$ a.s.} \\
  [u(\omega)](0) = d_0(\omega), \quad [u(\omega)](1) &= d_1(\omega) &&\text{a.s.}
\end{align}
\end{subequations}
Here, $\nu$, $f$, $d_0$ and $d_1$ are the random functions specified in
\cite{kouri2022primal}. For this example, $f_0(z)=\frac{\tau}{2}\|z\|^2_{L^2(D)}$,
$f_1(z)=\frac{1}{2}\|S(z)-1\|^2_{L^2(D)}$, $\phi_0\equiv 0$ and $\phi_1=\risk$
is a convex combination of the expected value and the average
value-at-risk
\[
  \risk(y) = (1-\lambda)\bbe[y] + \lambda\CVaR_p[y]
\]
with $\lambda=0.75$ and $p=0.9$, i.e., $\risk=\sigma_{\riskenv}$ with
$\riskenv=\{\theta\in L^2(\Omega,\mathcal{F},\mathbb{P})\,\vert\, \mathbb{E}[\theta]=1,\;\;0.25 \le\theta\le 7.75\;\text{a.s.}\}$.
We discretize the state $u$ and the control
$z$ in space using continuous piecewise linear finite elements on a uniform
mesh with 257 intervals and solve the resulting nonlinear system of equations
using Newton's method globalized with a backtracking line search.
Finally, we approximate the risk measure $\risk$ using
sample average approximation (SAA) with 10,000 Monte Carlo samples.
We note that Newton's method for solving \eqref{eq:burgers} for a fixed Monte Carlo sample, which we warmstart using
the solution at the previous sample, required 3.1 iterations on average in our
numerical experiments.  Consequently, solving \eqref{eq:burgers} is roughly
three times more expensive than solving the adjoint PDE when
evaluating the gradient.

We summarize the performance of Algorithm~\ref{alg:tr} in
Table~\ref{tbl:burgers} and include the performance of the primal-dual risk
minimization algorithm ({\tt PD-Risk}) \cite{kouri2022primal} and the bundle
method ({\tt Bundle}) described in \cite{HSchramm_JZowe_1992a} for comparison.
The results for {\tt PD-Risk} and {\tt Bundle} can also be found in
\cite[Tables~6~\&~7]{kouri2022primal}.
\begin{table}[!ht]
\centering
\begin{tabular}{l | r r r r}
  {\tt method} & {\tt iter} & {\tt nfval} & {\tt ngrad} & {\tt nhess} \\
  \hline
  {\tt Alg.~\ref{alg:tr}}  & {\tt   7} & {\tt   8} & {\tt   8} & {\tt 119} \\
  \hline
  {\tt PD-Risk} & {\tt  8} & {\tt  46} & {\tt  44} & {\tt 128} \\
  {\tt Bundle}  & {\tt 69} & {\tt 182} & {\tt 182} & {\tt ---}
\end{tabular}
\caption{Comparison of Algorithm~\ref{alg:tr} with the primal-dual risk
  minimization algorithm {\tt PD-Risk} and a bundle method {\tt Bundle}:
  {\tt iter} is the number of iterations, {\tt nfval} is the number of function
  evaluations, {\tt ngrad} is the number of gradient computations and
  {\tt nhess} is the number of Hessian applications.}
\label{tbl:burgers}
\end{table}
In Table~\ref{tbl:burgers}, {\tt iter} is the number of iterations, {\tt nfval}
is the number of evaluations of $f_0$ and $f_1$, {\tt ngrad} is the number of
derivative evaluations for $f_0$ and $f_1$, and {\tt nhess} is the number of
Hessian applications for $f_0$ and $f_1$.  Note that each derivative evaluation
of $f_1$ requires 10,000 deterministic adjoint PDE solves and each Hessian
application requires 20,000 additional deterministic linearized PDE solves.
Consequently, Algorithm~\ref{alg:tr} required 80,000 deterministic nonlinear
PDE solves to evaluate the objective function value and 2,460,000 deterministic
linear PDE solves to evaluate and apply the derivatives. In contrast,
{\tt PD-Risk} required 460,000 deterministic nonlinear PDE solves and 3,000,000
deterministic linear PDE solves, while {\tt Bundle} required 1,820,000
deterministic nonlinear and linear PDE solves. Although {\tt Bundle} performed
fewer linear PDE solves to compute derivatives, each nonlinear PDE solve
requires multiple linear solves.  When accounting for the linearized solves
required to solve the state, {\tt PD-Risk} required a total of 4,824,834
deterministic linear PDE solves, compared to 2,708,194 for
Algorithm~\ref{alg:tr}, and so Algorithm~\ref{alg:tr} reduced the total number
of deterministic linear PDE solves by a factor of $1.78$.
Algorithm~\ref{alg:tr} also provides additional benefits over {\tt PD-Risk}
since it can leverage inexact function and derivative evaluations, which we
demonstrate next.

Let $c_n(u,z)=0$ denote the discretized weak form of \eqref{eq:burgers} and
$u^{n,\ell}_k$ denote the $\ell$-th iterate generated by Newton's method for
solving $c_n$ at the $n$-th sample and the $k$-iteration of
Algorithm~\ref{alg:tr}.
Table~\ref{tbl:burgers-hist} includes the iteration histories for two instances
of Algorithm~\ref{alg:tr}: The top half corresponds to the iteration history
using accurate state PDE solves that we terminate based on the criterion
\[
  \|c_n(u_k^{n,\ell},z_k)\| \le \tau\max\{1,\|c_n(u_k^{n,0},z_k)\|\},
\]
where $\tau=10^{-4}\sqrt{\epsilon_{\rm mach}}\approx 1.49\times 10^{-12}$
and $\epsilon_{\rm mach}$ denotes machine precision, while the lower half
is the iteration history where we have allowed Algorithm~\ref{alg:tr} to
supply $\tau$ using Conditions~\ref{as:inexactGrad} and \ref{as:inexactObj}
with $\kappa_{\rm grad}=\kappa_{\rm val}=\kappa_{\rm jac}=1$ and
$\kappa_{\rm obj}=10^4$. We selected this value for $\kappa_{\rm obj}$ so that
the initial tolerances generated by Conditions~\ref{as:inexactGrad} and
\ref{as:inexactObj} were the same order of magnitude.
In Table~\ref{tbl:burgers-hist}, the column labelled {\tt val tol} contains the
tolerances from Condition~\ref{as:inexactObj}, {\tt grad tol} contains the
subproblem model tolerances from Condition~\ref{as:inexactGrad} and
{\tt itsp} contains the number of subproblem solver iterations.
Both the exact and inexact algorithms perform comparably, requiring
the same number of iterations. However, the inexact variant leverages
significantly relaxed PDE solver tolerances, reducing the total number of
Newton iterations from 248,194 to 216,706.
Finally, to evaluate the proximity operator of $\psi_k$, the average iteration
counts of Algorithm~\ref{alg:prox-phik} were 26.8 and 28.3 for the
high-fidelity and adaptive-tolerance experiments, respectively.  Consequently, only modest computational effort is required to achieve the prescribed tolerance in \eqref{eq:alg2-tol} for this example.
\begin{table}[!ht]
\small
\begin{tabular}{r r r r r r r r r}
    $k$ &        $J(x_k)$ &           $h_k$ &      $\Delta_k$ & $\|x_k^+-x_k\|$ &  {\tt val tol} & {\tt grad tol} & {\tt itsp} \\
\hline
{\tt  0} & {\tt 2.4289e-2} & {\tt 3.9059e-3} & {\tt  1.0000e+1} & {\tt       ---} & {\tt 1.490e-12} & {\tt 1.490e-12} & {\tt  ---} \\
{\tt  1} & {\tt 1.3441e-2} & {\tt 3.5555e-3} & {\tt  1.0000e+2} & {\tt 1.1950e+0} & {\tt 1.490e-12} & {\tt 1.490e-12} & {\tt   15} \\
{\tt  2} & {\tt 1.2453e-2} & {\tt 2.5784e-3} & {\tt  1.0000e+3} & {\tt 3.5392e-1} & {\tt 1.490e-12} & {\tt 1.490e-12} & {\tt   15} \\
{\tt  3} & {\tt 1.2151e-2} & {\tt 6.9818e-4} & {\tt  1.0000e+4} & {\tt 2.6859e-1} & {\tt 1.490e-12} & {\tt 1.490e-12} & {\tt   15} \\
{\tt  4} & {\tt 1.2129e-2} & {\tt 6.1039e-5} & {\tt  1.0000e+5} & {\tt 1.3944e-1} & {\tt 1.490e-12} & {\tt 1.490e-12} & {\tt   15} \\
{\tt  5} & {\tt 1.2128e-2} & {\tt 1.6595e-6} & {\tt  1.0000e+6} & {\tt 1.8164e-2} & {\tt 1.490e-12} & {\tt 1.490e-12} & {\tt   15} \\
{\tt  6} & {\tt 1.2128e-2} & {\tt 3.9913e-7} & {\tt  1.0000e+7} & {\tt 5.4855e-4} & {\tt 1.490e-12} & {\tt 1.490e-12} & {\tt   15} \\
{\tt  7} & {\tt 1.2128e-2} & {\tt 3.2924e-9} & {\tt  1.0000e+8} & {\tt 4.5861e-5} & {\tt 1.490e-12} & {\tt 1.490e-12} & {\tt   15} \\
\hline
{\tt  0} & {\tt 2.3866e-2} & {\tt 4.4689e-3} & {\tt  1.0000e+1} & {\tt       ---} & {\tt  1.000e-2} & {\tt 4.469e-3} & {\tt  ---} \\
{\tt  1} & {\tt 1.3256e-2} & {\tt 2.2653e-3} & {\tt  1.0000e+2} & {\tt 1.2599e+0} & {\tt  1.931e-3} & {\tt 2.265e-3} & {\tt   15} \\
{\tt  2} & {\tt 1.2478e-2} & {\tt 5.5037e-4} & {\tt  1.0000e+3} & {\tt 3.1316e-1} & {\tt  1.191e-4} & {\tt 5.504e-3} & {\tt   15} \\
{\tt  3} & {\tt 1.2132e-2} & {\tt 3.9939e-4} & {\tt  1.0000e+4} & {\tt 3.0440e-1} & {\tt  5.023e-5} & {\tt 3.994e-4} & {\tt   15} \\
{\tt  4} & {\tt 1.2128e-2} & {\tt 1.1822e-5} & {\tt  1.0000e+5} & {\tt 5.6905e-2} & {\tt  3.581e-7} & {\tt 1.182e-5} & {\tt   15} \\
{\tt  5} & {\tt 1.2128e-2} & {\tt 7.3445e-6} & {\tt  1.0000e+6} & {\tt 4.3233e-3} & {\tt 7.582e-10} & {\tt 7.344e-6} & {\tt   15} \\
{\tt  6} & {\tt 1.2128e-2} & {\tt 2.6303e-7} & {\tt  1.0000e+7} & {\tt 7.6969e-4} & {\tt 6.844e-11} & {\tt 2.630e-7} & {\tt   15} \\
{\tt  7} & {\tt 1.2128e-2} & {\tt 3.2007e-9} & {\tt  1.0000e+8} & {\tt 5.8116e-5} & {\tt 1.490e-12} & {\tt 3.201e-9} & {\tt   15} \\
\hline
\end{tabular}
\caption{Algorithm~\ref{alg:tr} iteration history using high-fidelity (rows 2
         through 8) and inexact (rows 9 through 15) PDE solves.  Columns 6 and
         7 list the relative residual tolerance determined by the trust-region
         algorithm.}
\label{tbl:burgers-hist}
\end{table}

\subsection{Sparse Optimal Control}
The goal of this example is to demonstrate the mesh independence of
Algorithm~\ref{alg:tr}.  Let $D=(0,0.6)\times(0,0.2)$.  We consider a
deterministic semilinear elliptic optimal control problem motivated by the one
studied in \cite{kouri2020risk}. Our control problem is given by
\begin{equation}\label{eq:sparse-ctrl}
  \min_{\substack{z\in L^2(D) \\ -10\le z\le 10}}
  \max\left\{0,w-\frac{1}{|D_o|}\int_{D_o} S(z)\,\textup{d}x\right\} + \frac{\tau}{2}\|z\|_{L^2(D)}^2 + \tau_1 \|z\|_{L^1(D)},
\end{equation}
where $S(z)=u\in H^1(D)$ solves the weak form of the semilinear elliptic PDE
\begin{align}
  -\kappa\Delta u + \gamma u^3 &= 12\chi_{D_b}+z && \text{in $D$} \nonumber \\
  u &=0 &&\text{on $\Gamma = [0,0.6]\times\{0\}$} \label{eq:sparse-ctrl-pde}\\
  \kappa\nabla u\cdot n &= 0 && \text{on $\partial D\setminus\Gamma$}. \nonumber
\end{align}
Here, $D_b=(0,0.1)\times(0.167,0.2)$, $D_o=(0.5,0.6)\times(0.167,0.2)$,
$\chi_{D_b}$ denotes the characteristic function of $D_b$, $\tau=10^{-4}$,
$\tau_1=10^{-2}$, $\kappa=0.25$, $\gamma=1.45$ and $w=0.2$. For this problem,
$f_0(z)=\frac{\tau}{2}\|z\|^2_{L^2(D)}$, $\phi_1(t)=\max\{0,t\}$ and
\[
  f_1(z) = w-\frac{1}{|D_o|}\int_{D_o} S(z)\,\textup{d}x.
\]
Moreover, $\phi_0$ is the sum of the $L^1$-penalty term and the indicator function
associated with the bound constraints $-10\le z\le 10$ a.e.  We discretized $u$
in \eqref{eq:sparse-ctrl-pde} using P1 finite elements on a uniform triangular
mesh and $z$ using piecewise constants on the same mesh.  As in the previous
example, we solve the discretized nonlinear PDE using a line search globalized
Newton's method.  To investigate mesh independence, we performed a mesh
refinement study, where we uniformly refined the initial mesh and solved the
refined optimization problem from an initial guess of $z\equiv0$. The initial
mesh was generated by splitting the elements of a 60-by-20 uniform
quadrilateral mesh into triangles. Subsequent meshes were generated by uniformly
refining the initial quadrilateral mesh and then partitioning into triangles.
\begin{table}[!ht]
\centering
\small
\begin{tabular}{c | r r r r r r r}
  {\tt mesh} & {\tt iter} & {\tt nfval} & {\tt ngrad} & {\tt nhess} & {\tt npsi} & {\tt nprox} & {\tt aprox} \\
\hline
{\tt    60$\times$20} & {\tt 2} & {\tt 3} & {\tt 3} & {\tt 34} & {\tt 1875} & {\tt 798} & {\tt 20.80} \\
{\tt\!\!\!120$\times$40} & {\tt 2} & {\tt 3} & {\tt 3} & {\tt 34} & {\tt 2084} & {\tt 772} & {\tt 20.06} \\
{\tt\!\!\!240$\times$80} & {\tt 2} & {\tt 3} & {\tt 3} & {\tt 34} & {\tt 2176} & {\tt 795} & {\tt 20.71} \\
{\tt  480$\times$160} & {\tt 2} & {\tt 3} & {\tt 3} & {\tt 34} & {\tt 2192} & {\tt 788} & {\tt 20.51} \\
{\tt  960$\times$320} & {\tt 2} & {\tt 3} & {\tt 3} & {\tt 34} & {\tt 2079} & {\tt 789} & {\tt 20.54} \\
{\tt\!\!\!1920$\times$640} & {\tt 2} & {\tt 3} & {\tt 3} & {\tt 34} & {\tt 2138} & {\tt 787} & {\tt 20.49} \\
\hline
\end{tabular}
\caption{Mesh refinement study of Algorithm~\ref{alg:tr} applied to the sparse
control application, demonstrating mesh independence.}
\label{tbl:sparse-ctrl-mesh}
\end{table}
Table~\ref{tbl:sparse-ctrl-mesh} lists the results of this study.  The columns
of Table~\ref{tbl:sparse-ctrl-mesh} include the total number of iterations
({\tt iter}), the number of evaluations of $f_0$ and $f_1$
({\tt nfval}), the number of derivative computations for $f_0$ and $f_1$
({\tt ngrad}), the number of Hessian applications for $f_0$ and $f_1$
({\tt nhess}), the number of evaluations of $\psi_k$ ({\tt npsi}), the number
of proximity operator evaluations for $\psi_k$ ({\tt nprox}) and the average
number of iterations of Algorithm~\ref{alg:prox-phik} ({\tt aprox}).
As seen in Table~\ref{tbl:sparse-ctrl-mesh}, the performance of
Algorithm~\ref{alg:tr} is essentially constant, suggesting that
Algorithm~\ref{alg:tr} is mesh independent.  To achieve this, our
implementation uses infinite-dimensionally consistent scaled inner products and
norms for $X$ and $Y$.  Note that quantities like the gradient and proximity
operator depend on the choice of inner product, which can complicate their
computation in general.  However, in this example, the discretized inner product for $X$ is
the Euclidean inner product scaled by the mesh cell volume. In
addition, we see that the average number of iterations of
Algorithm~\ref{alg:prox-phik} to compute the proximity operator of $\psi_k$ is
relatively modest, requiring roughly 20 iterations on average to achieve the
requested tolerance of $10^{-10}$ in \eqref{eq:alg2-tol}.

\section{Conclusions}
We have introduced a provably convergent inexact trust-region algorithm,
extended from \cite{baraldi2022proximal}, for the structured class of nonsmooth
optimization problems represented by \eqref{eq:optprob}.  This class
encapsulates many important applications include risk-averse stochastic
optimization and nonsmooth penalty methods for nonlinear programming.
We verify through numerical examples that our method is quite efficient
when solving discretized PDE-constrained optimization problems, demonstrating
invariance to the discretization size and outperforming some existing nonsmooth
methods.  Although our method performs well in practice, the theoretical
results rely heavily on the Lipschitz continuity of $\phi_1$ and the exact
computation of the proximity operator $\prox_{t\psi_k}$.  Permitting
extended real valued $\phi_1$ would enable, e.g., the solution of
nonlinearly constrained problems, while permitting inexact proximity operator
evaluation may further improve efficiency.  In future work, we hope
to relax these requirements.

\backmatter

\bmhead{Acknowledgements}

We would like to thank Thomas Surowiec for the many fruitful discussions about
earlier versions of this work.
Sandia National Laboratories is a multimission laboratory
managed and operated by National Technology and Engineering
Solutions of Sandia, LLC., a wholly owned subsidiary of
Honeywell International, Inc., for the U.S.\ Department of
Energy’s National Nuclear Security Administration under
contract DE-NA0003525.
This paper describes objective technical results and analysis. Any
subjective views or opinions that might be expressed in the paper
do not necessarily represent the views of the U.S.\ Department of
Energy or the United States Government.

\section*{Declarations}

\begin{itemize}
\item {\bf Funding:} This research was sponsored, in part, by the
  U.S.~Department of Energy Office of Science under the Early Career Research
  Program and the U.S.~Air~Force Office of Scientific Research.
\item {\bf Competing Interests:} The author has no competing interests to
  declare that are relevant to the content of this article.
\item {\bf Data Availability:} No data was generated or used for this article. 
\end{itemize}

\begin{appendices}
\section{Weak Convergence of the Spectral Proximal Gradient Method}
\label{ap:spg}
In this appendix, we prove that the spectral proximal gradient method used 
in Algorithm~\ref{alg:prox-phik} produces weakly converging iterates.  Let
$Z$ be a real Hilbert space, $f:Z\to\real$ and $\phi:Z\to(-\infty,+\infty]$
be convex functions, and consider the minimization problem
\begin{equation}\label{eq:optprob-appendix}
  \min_{z\in Z} \{F(z)\coloneqq f(z)+\phi(z)\}.
\end{equation}
We require that the functions $f$ and $\phi$ satisfy the following assumptions.
\begin{assumption}\label{as:appendix}
The problem data in \eqref{eq:optprob-appendix} satisfies the following
conditions.
\begin{enumerate}
  \item The function $\phi:Z\to(-\infty,+\infty]$ is proper, closed and
        convex.
  \item The function $f:Z\to\real$ is convex and Fr\'{e}chet differentiable on
        an open set containing $\dom{\phi}$ with Lipschitz continuous gradient.
  \item The objective function $F=f+\phi$ is bounded from below.
\end{enumerate}
\end{assumption}
In relation to Algorithm~\ref{alg:prox-phik}, $Z=Y$, $f(z)=-d(z)$ and
$\phi(z)=\phi_1^*(z)$.  The main result of this appendix demonstrates that the
iterates generated by the spectral proximal gradient method listed in
Algorithm~\ref{alg:spg} converge weakly to a solution of
\eqref{eq:optprob-appendix}.
\begin{algorithm}[!ht]
\caption{Spectral Proximal Gradient Method}
  \label{alg:spg}
  \begin{algorithmic}[1]
    \Require{Initial guess $z^{(1)}\in\dom{\phi}$,
      initial value $\mathfrak{F}^{(n)}=F(z^{(1)})$,
      safeguards $0<\gamma_{\min}<\gamma_{\max}<+\infty$ and $a_{\min}\in(0,1]$,
      initial steplengths $\gamma^{(1)}\in[\gamma_{\min},\gamma_{\max}]$ and
      $\lambda_0\in(0,1]$, and $0<\sigma_1<\sigma_2<1$, and positive parameters
      $\alpha\in(0,1)$, and $\beta\in[\sigma_1,\sigma_2]$}
    \For{$n=1,2,\ldots$}
      \State{$\mu^{(n)}\gets\nabla f(z^{(n)})$}
      \If{$n>1$}
        \State{$\gamma^{(n)}\gets\min\left\{\gamma_{\max},\max\left\{\gamma_{\min},\displaystyle{\frac{\lambda^{(n-1)}\|s^{(n-1)}\|_Z^2}{(\mu^{(n)}-\mu^{(n-1)},s^{(n-1)})_Z}}\right\}\right\}$}
      \EndIf
      \State{$s^{(n)}\gets\prox_{\gamma^{(n)}\phi}(z^{(n)}-\gamma^{(n)}\mu^{(n)})-z^{(n)}$}
      \State{$\lambda^{(n)}\gets\lambda_0$}
      \State{$\mathcal{Q}^{(n)}\gets(\mu^{(n)},s^{(n)})_Z+\phi(z^{(n)}+s^{(n)})-\phi(z^{(n)})$}
      \While{$F(z^{(n)}+\lambda^{(n)} s^{(n)}) > \mathfrak{F}^{(n)} + \alpha\lambda^{(n)}\mathcal{Q}^{(n)}$}
        \State{$\delta\gets-\displaystyle{\frac{[\lambda^{(n)}(\mu^{(n)},s^{(n)})_Z+\phi(z^{(n)}+\lambda^{(n)}s^{(n)})-\phi(z^{(n)})]}{2[f(z^{(n)}+\lambda^{(n)} s^{(n)})-f(z^{(n)})-\lambda^{(n)}(\mu^{(n)},s^{(n)})_Z]}}$}
        \If{$\delta\in[\sigma_1,\sigma_2]$}
          \State{$\lambda^{(n)}\gets\delta\lambda^{(n)}$}
        \Else
          \State{$\lambda^{(n)}\gets\beta\lambda^{(n)}$}
        \EndIf
      \EndWhile
      \State{$z^{(n+1)}\gets z^{(n)} + \lambda^{(n)}s^{(n)}$}
      \State{$\mathfrak{F}^{(n+1)}\gets (1-a^{(n)})\mathfrak{F}^{(n)}+a^{(n)}F(z^{(n+1)})$ for $a^{(n)}\in[a_{\min},1]$}
    \EndFor
  \end{algorithmic}
\end{algorithm}
However, we first prove that the line search in Algorithm~\ref{alg:spg} (i.e.,
lines~9-16) is well defined.  Note that $\delta$ at line~10 is the minimizer of
a quadratic interpolant of the map $t\mapsto F(z^{(n)}+t\lambda^{(n)}s^{(n)})$,
where $\{z^{(n)}\}\subset Z$ is the sequence of iterates generated by
Algorithm~\ref{alg:spg}, $\{s^{(n)}\}\subset Z$ is the sequence of proximal
gradient steps and $\{\lambda^{(n)}\}\subset[0,1]$ is the sequence of step
lengths. In addition, Algorithm~\ref{alg:spg} leverages the nonmonotone line
search from \cite{zhang2004nonmonotone} at line~9 instead of the max-based line
search from \cite{grippo1986nonmonotone} because it yields the useful property
that $\lambda^{(n)}\|z^{(n)}-z^{(n-1)}\|_Z^2\to 0$.

\begin{proposition}\label{prop:spg-appendix}
  Let Assumption~\ref{as:appendix} hold and suppose that $z^{(n)}$ is not a
  solution to \eqref{eq:optprob-appendix}, then lines~9-16 of
  Algorithm~\ref{alg:spg} terminate after finitely many iterations with
  $\lambda^{(n)}>0$.
\end{proposition}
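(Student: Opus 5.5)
We show that if $z^{(n)}$ is not a solution then $\mathcal{Q}^{(n)}<0$, and that $\lambda\mathcal{Q}^{(n)}$ is a first-order model of $F(z^{(n)}+\lambda s^{(n)})-F(z^{(n)})$. Since $\mathfrak{F}^{(n)}\ge F(z^{(n)})$, the Armijo-type test at line~9 then holds for all sufficiently small $\lambda>0$. The backtracking multiplies $\lambda^{(n)}$ by a factor in $[\sigma_1,\sigma_2]\subset(0,1)$ at each rejection, so it must stop after finitely many steps with $\lambda^{(n)}>0$.

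\textbf{Step 1: $\mathfrak{F}^{(n)}\ge F(z^{(n)})$.} We argue by induction. The base case holds since $\mathfrak{F}^{(1)}=F(z^{(1)})$. If the line search at iteration $n$ terminated with $\mathcal{Q}^{(n)}\le 0$, then
\[
F(z^{(n+1)})\le \mathfrak{F}^{(n)}+\alpha\lambda^{(n)}\mathcal{Q}^{(n)}\le\mathfrak{F}^{(n)},
\]
and hence $\mathfrak{F}^{(n+1)}=(1-a^{(n)})\mathfrak{F}^{(n)}+a^{(n)}F(z^{(n+1)})\ge F(z^{(n+1)})$. Nonpositivity of $\mathcal{Q}^{(n)}$ follows from Step~2, and if $\mathcal{Q}^{(n)}=0$ then $z^{(n)}$ is a solution.

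\textbf{Step 2: sign of $\mathcal{Q}^{(n)}$.} Set $p=z^{(n)}+s^{(n)}=\prox_{\gamma\phi}(z^{(n)}-\gamma\mu^{(n)})$ with $\gamma=\gamma^{(n)}$. The prox characterization gives $\frac{1}{\gamma}(z^{(n)}-p)-\mu^{(n)}\in\partial\phi(p)$. Applying the subgradient inequality at $z^{(n)}$, which lies in $\dom{\phi}$ because all iterates stay there, yields
\[
\phi(z^{(n)})\ge\phi(p)+\bigl(\tfrac{1}{\gamma}(z^{(n)}-p)-\mu^{(n)},\,z^{(n)}-p\bigr)_Z .
\]
Rearranging gives $\mathcal{Q}^{(n)}\le-\frac{1}{\gamma}\|s^{(n)}\|_Z^2$. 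If $s^{(n)}=0$, then $z^{(n)}$ is a fixed point of the proximal gradient map; by convexity it satisfies $-\nabla f(z^{(n)})\in\partial\phi(z^{(n)})$ and is therefore a minimizer, contradicting the hypothesis. Hence $\mathcal{Q}^{(n)}\le-\|s^{(n)}\|_Z^2/\gamma_{\max}<0$.

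\textbf{Step 3: decrease for small $\lambda$.} For $\lambda\in(0,1]$, convexity of $\phi$ gives
\[
\phi(z^{(n)}+\lambda s^{(n)})\le\phi(z^{(n)})+\lambda[\phi(p)-\phi(z^{(n)})].
\]
Lipschitz continuity of $\nabla f$ with modulus $L$ gives
\[
f(z^{(n)}+\lambda s^{(n)})\le f(z^{(n)})+\lambda(\mu^{(n)},s^{(n)})_Z+\tfrac{L}{2}\lambda^2\|s^{(n)}\|_Z^2 .
\]
The segment lies in $\dom{\phi}$ by convexity, so this bound on $f$ applies. Adding the two estimates,
\[
F(z^{(n)}+\lambda s^{(n)})\le F(z^{(n)})+\lambda\mathcal{Q}^{(n)}+\tfrac{L}{2}\lambda^2\|s^{(n)}\|_Z^2 .
\]
The right-hand side is at most $\mathfrak{F}^{(n)}+\alpha\lambda\mathcal{Q}^{(n)}$ whenever
\[
\tfrac{L}{2}\lambda\|s^{(n)}\|_Z^2\le(1-\alpha)(-\mathcal{Q}^{(n)}),
\]
which by Step~2 holds for all $\lambda\le 2(1-\alpha)/(L\gamma_{\max})$. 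Each rejected trial shrinks $\lambda^{(n)}$ by a factor of at most $\sigma_2<1$, so only finitely many rejections occur and the accepted $\lambda^{(n)}$ is positive.

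\textbf{Main obstacle.} The delicate point is the well-definedness of $\delta$ at line~10. The denominator $f(z+\lambda s)-f(z)-\lambda(\mu,s)_Z$ is nonnegative by convexity and may vanish. When it is zero (or $\delta$ is otherwise undefined or outside $[\sigma_1,\sigma_2]$), we interpret $\delta$ as falling outside that interval, so the algorithm uses $\beta$. With this convention every reduction factor lies in $[\sigma_1,\sigma_2]$ and the counting argument of Step~3 goes through.
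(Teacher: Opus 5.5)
Your proof is correct, and its skeleton matches the paper's. Both use an induction giving $\mathfrak{F}^{(n)}\ge F(z^{(n)})$ from the previous acceptance test and $\mathcal{Q}^{(n-1)}\le 0$. Both use the estimate $\mathcal{Q}^{(n)}\le-\|s^{(n)}\|_Z^2/\gamma^{(n)}$, which the paper imports from Lemma~2 of \cite{baraldi2022proximal} and you derive directly from the prox characterization.

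You differ at the termination step. The paper argues by contradiction. If the backtracking never stops, then $\lambda_\ell\to 0$, and every failed test gives $\bigl(F(z^{(n)}+\lambda_\ell s^{(n)})-F(z^{(n)})\bigr)/\lambda_\ell>\alpha\mathcal{Q}^{(n)}$. Differentiability of $f$ and convexity of $\phi$ bound the limit of these quotients by $\mathcal{Q}^{(n)}$. Hence $(1-\alpha)\mathcal{Q}^{(n)}\ge 0$, contradicting $\mathcal{Q}^{(n)}<0$. This qualitative argument needs only differentiability of $f$ at $z^{(n)}$.

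You instead use the descent lemma. It needs the Lipschitz modulus $L$ of $\nabla f$, but gives an explicit threshold: every trial $\lambda\le 2(1-\alpha)/(L\gamma_{\max})$ is accepted. Hence $\lambda^{(n)}\ge\min\{\lambda_0,\,2\sigma_1(1-\alpha)/(L\gamma_{\max})\}$, uniformly in $n$. That is more than the proposition asks for. It would also make the case $\lambda^{(n)}\to 0$ in the proof of Theorem~\ref{th:spg-appendix} unnecessary, since $\lambda^{(n)}\|s^{(n)}\|_Z^2\to 0$ would then directly give $s^{(n)}\to 0$.

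Your ``main obstacle'' is not actually an obstacle. Whenever the while-loop body executes, three facts combine: the failed test, $\mathfrak{F}^{(n)}\ge F(z^{(n)})$, and $\phi(z^{(n)}+\lambda s^{(n)})-\phi(z^{(n)})\le\lambda[\phi(z^{(n)}+s^{(n)})-\phi(z^{(n)})]$. Together they give
\[
f(z^{(n)}+\lambda s^{(n)})-f(z^{(n)})-\lambda(\mu^{(n)},s^{(n)})_Z>-(1-\alpha)\lambda\mathcal{Q}^{(n)}>0 .
\]
So the denominator of $\delta$ is strictly positive, and by the same convexity bound its numerator is at least $-\lambda\mathcal{Q}^{(n)}>0$. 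No convention is needed. Your convention is harmless anyway, since every reduction factor lies in $[\sigma_1,\sigma_2]$ either way.
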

\begin{proof}
  We prove this result by induction.  Let $n=1$, since $z^{(1)}$ is not a
  solution to \eqref{eq:optprob-appendix}, $\|s^{(1)}\|_Z>0$ and so
  $\mathcal{Q}^{(1)}<0$. Moreover, we have that $\mathfrak{F}^{(1)}=F(z^{(1)})$
  and lines~9-16 of Algorithm~\ref{alg:spg} produce a sequence of iterates
  $\{\lambda_\ell^{(1)}\}$ with
  $\lambda_{\ell}^{(1)}\in[\sigma_1\lambda_{\ell-1}^{(1)},\sigma_2\lambda_{\ell-1}^{(1)}]$.
  For a contradiction, suppose that lines~9-16 do not terminate in finitely
  many iterations, then since $\sigma_2<1$, we have that
  $\lambda_\ell^{(1)}\to 0$ as $\ell\to\infty$.  Moreover, line~9 ensures that
  \begin{equation}\label{eq:bnd-appendix}
    \frac{F(z^{(1)}+\lambda_\ell^{(1)}s^{(1)}) - F(z^{(1)})}{\lambda_\ell^{(1)}}
    \ge \frac{F(z^{(1)}+\lambda_\ell^{(1)}s^{(1)}) - \mathfrak{F}^{(1)}}{\lambda_\ell^{(1)}}
    > \alpha\mathcal{Q}^{(1)}.
  \end{equation}
  The convexity of $\phi$ and differentiability of $f$ then yields
  \[
    \lim_{\ell\to\infty} \frac{F(z^{(1)}+\lambda_\ell^{(1)}s^{(1)})-F(z^{(1)})}{\lambda_\ell^{(1)}}
    \le (\nabla f(z^{(1)}),s^{(1)})_Z + \phi(z^{(1)}+s^{(1)}) - \phi(z^{(1)})
    = \mathcal{Q}^{(1)}.
  \]
  Hence, \cite[Lemma~2]{baraldi2022proximal} implies
  \[
    0 > (1-\alpha)\mathcal{Q}^{(1)} \ge \frac{1-\alpha}{\gamma^{(1)}} \|s^{(1)}\|_Z^2 \ge \frac{1-\alpha}{\gamma^{\max}} \|s^{(1)}\|_Z^2
  \]
  and so
  \[
    \|s^{(1)}\|_Z = \|z^{(1)}-\prox_{\gamma^{(1)}\phi}(z^{(1)}-\gamma^{(1)}\nabla f(z^{(1)}))\|_Z = 0,
  \]
  which is a contradiction.  Whence, lines~9-16 terminate in finitely many
  iterations.  Now, let $n\in\mathbb{N}$ be such that $z^{(n)}$ is not a
  solution to \eqref{eq:optprob-appendix} and suppose that $\lambda^{(n-1)}>0$, then
  \[
    \begin{aligned}
    \mathfrak{F}^{(n)}&=(1-a^{(n-1)})\mathfrak{F}^{(n-1)}+a^{(n-1)} F(z^{(n)}) \\
    &\ge (1-a^{(n-1)})(F(z^{(n)})-\alpha\lambda^{(n-1)}\mathcal{Q}^{(n-1)})+a^{(n-1)}F(z^{(n)}) \\
    &\ge F(z^{(n)}),
    \end{aligned}
  \]
  where the final inequality follows from \cite[Lemma~2]{baraldi2022proximal}.
  Consequently, \eqref{eq:bnd-appendix} holds with 1 replaced by $n$ and the
  analysis for the $n=1$ case applies, proving the desired result.
\end{proof}

In the subsequent theorem, we prove that $\{z^{(n)}\}$ weakly converges to
a solution of \eqref{eq:optprob-appendix}.  This result uses a notion of
convergence called quasi-Fej\'{e}r monotonicity. We refer the reader to
\cite[Chapter~5]{bauschke2017convex} for more details.

\begin{theorem}\label{th:spg-appendix}
  Let Assumption~\ref{as:appendix} hold.  Then $\{z^{(n)}\}$ converges
  weakly to a solution $\bar z$ of \eqref{eq:optprob-appendix}.
\end{theorem}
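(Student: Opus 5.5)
The plan is to show that $\{z^{(n)}\}$ is quasi-Fej\'{e}r monotone with respect to the solution set $S$ of \eqref{eq:optprob-appendix}, and that every weak cluster point of $\{z^{(n)}\}$ lies in $S$. The conclusion then follows from the standard result \cite[Theorem~5.5]{bauschke2017convex}, in its quasi-Fej\'{e}r variant: if, for each $\bar z\in S$,
\[
  \|z^{(n+1)}-\bar z\|_Z^2\le\|z^{(n)}-\bar z\|_Z^2+\epsilon_n
  \quad\text{with}\quad
  \sum_n\epsilon_n<\infty,
\]
and all weak cluster points lie in $S$, then $z^{(n)}\rightharpoonup\bar z\in S$. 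We assume $S\neq\emptyset$, as is implicit in the statement.

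\emph{Step 1 (summability from the line search).} By Proposition~\ref{prop:spg-appendix} the line search terminates. Its acceptance criterion, combined with the Zhang--Hager averaging $\mathfrak{F}^{(n+1)}=(1-a^{(n)})\mathfrak{F}^{(n)}+a^{(n)}F(z^{(n+1)})$, gives
\[
  \mathfrak{F}^{(n+1)}\le\mathfrak{F}^{(n)}+a_{\min}\alpha\lambda^{(n)}\mathcal{Q}^{(n)}.
\]
We also have $\mathfrak{F}^{(n)}\ge F(z^{(n)})\ge\inf F>-\infty$. By \cite[Lemma~2]{baraldi2022proximal}, $-\mathcal{Q}^{(n)}\ge\|s^{(n)}\|_Z^2/\gamma^{(n)}\ge\|s^{(n)}\|_Z^2/\gamma_{\max}$. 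Telescoping therefore yields
\[
  \sum_n\lambda^{(n)}\|s^{(n)}\|_Z^2<\infty
  \qquad\text{and}\qquad
  \sum_n\lambda^{(n)}(-\mathcal{Q}^{(n)})<\infty.
\]
This is the property emphasized before Proposition~\ref{prop:spg-appendix}.

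\emph{Step 2 (quasi-Fej\'{e}r inequality).} Fix $\bar z\in S$ and write $p^{(n)}=z^{(n)}+s^{(n)}$. The prox characterization of $p^{(n)}$ gives, for all $w$,
\[
  \phi(w)\ge\phi(p^{(n)})+\tfrac{1}{\gamma^{(n)}}(z^{(n)}-\gamma^{(n)}\mu^{(n)}-p^{(n)},w-p^{(n)})_Z.
\]
We expand
\[
  \|z^{(n+1)}-\bar z\|^2=\|z^{(n)}-\bar z\|^2+2\lambda^{(n)}(s^{(n)},z^{(n)}-\bar z)+(\lambda^{(n)})^2\|s^{(n)}\|^2.
\]
Taking $w=\bar z$ and using convexity of $f$, the cross term is
\[
  (s^{(n)},z^{(n)}-\bar z)=(s^{(n)},p^{(n)}-\bar z)-\|s^{(n)}\|^2
  \le\gamma^{(n)}\bigl[F(\bar z)-F(p^{(n)})+\text{(linearization error)}\bigr]-\|s^{(n)}\|^2.
\]
Here $F(\bar z)\le F(p^{(n)})$. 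The linearization error $f(p^{(n)})-f(z^{(n)})-(\mu^{(n)},s^{(n)})$ is at most $\tfrac{L}{2}\|s^{(n)}\|^2$ by the descent lemma. Hence
\[
  \|z^{(n+1)}-\bar z\|^2\le\|z^{(n)}-\bar z\|^2+C\lambda^{(n)}\|s^{(n)}\|^2,
  \qquad C=\gamma_{\max}L+1,
\]
using $\lambda^{(n)}\le1$ and $\gamma^{(n)}\le\gamma_{\max}$. Step 1 makes the error terms summable, so the sequence is quasi-Fej\'{e}r; in particular $\{z^{(n)}\}$ is bounded and $\|z^{(n)}-\bar z\|$ converges.

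\emph{Step 3 (weak cluster points are solutions).} This is the main obstacle: Step 1 only controls $\lambda^{(n)}\|s^{(n)}\|^2$, not $\|s^{(n)}\|$, so we need a uniform lower bound $\lambda^{(n)}\ge\underline\lambda>0$. We would obtain it from the backtracking structure. If $\lambda^{(n)}<\lambda_0$, then the previous trial $\lambda'\in[\lambda^{(n)}/\sigma_2,\lambda^{(n)}/\sigma_1]$ was rejected. Rejection plus $\mathfrak{F}^{(n)}\ge F(z^{(n)})$, convexity of $\phi$ along the segment, and the descent lemma give
\[
  \alpha\lambda'\mathcal{Q}^{(n)}<\lambda'\mathcal{Q}^{(n)}+\tfrac{L}{2}(\lambda')^2\|s^{(n)}\|^2.
\]
Combined with $-\mathcal{Q}^{(n)}\ge\|s^{(n)}\|^2/\gamma_{\max}$, this yields $\lambda'\ge 2(1-\alpha)/(L\gamma_{\max})$, and hence $\lambda^{(n)}\ge\min\{\lambda_0,2\sigma_1(1-\alpha)/(L\gamma_{\max})\}$. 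Consequently $\|s^{(n)}\|\to0$.

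For a weak cluster point $z^{(n)}\rightharpoonup\hat z$ along $\cK$, we use $\gamma^{(n)}\ge\gamma_{\min}$ and the prox inclusion
\[
  u^{(n)}\coloneqq-\tfrac{1}{\gamma^{(n)}}s^{(n)}-\mu^{(n)}+\nabla f(p^{(n)})\in\partial F(p^{(n)}).
\]
Since $\|s^{(n)}\|\to0$ and $\nabla f$ is Lipschitz, $u^{(n)}\to0$ strongly, while $p^{(n)}\rightharpoonup\hat z$. Weak--strong closedness of the graph of the maximally monotone operator $\partial F$ \cite[Proposition~20.38]{bauschke2017convex} then gives $0\in\partial F(\hat z)$, i.e.\ $\hat z\in S$. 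Opial-type reasoning via the quasi-Fej\'{e}r theorem completes the proof.
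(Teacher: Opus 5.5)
Your proof is correct. It shares the paper's skeleton: quasi-Fej\'{e}r monotonicity from the telescoping bound on $\mathfrak{F}^{(n)}$, then weak--strong closedness of $\partial F$ at $p^{(n)}=z^{(n)}+s^{(n)}$, and finally \cite[Theorem~5.33]{bauschke2017convex}. Two steps are organized differently.

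For the Fej\'{e}r estimate, the paper combines the prox inequality with convexity of $f$ at $z^{(n)}$ and with $F(z^{(n)})\ge F(z)$. This gives the error $\varepsilon^{(n)}=-\lambda^{(n)}\|s^{(n)}\|_Z^2-2\lambda^{(n)}\gamma^{(n)}\mathcal{Q}^{(n)}$ without ever using the Lipschitz constant of $\nabla f$. You compare at $p^{(n)}$ instead and pay a descent-lemma term, which gives the error $C\lambda^{(n)}\|s^{(n)}\|_Z^2$. Both errors are summable by the same telescoping sum, so this difference is a matter of taste.

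The more substantive difference is in showing that cluster points are solutions. The paper only extracts $\lambda^{(n)}\|s^{(n)}\|_Z^2\to0$. It then splits into the cases $\inf_{\mathcal{N}}\lambda^{(n)}>0$ and $\lambda^{(n)}\to0$ along the subsequence, and in the second case uses the last rejected trial step $\lambda_-^{(n)}$ to force $\|s^{(n)}\|_Z\to0$. You run that rejected-step argument once, with the descent lemma, to get the uniform bound $\lambda^{(n)}\ge\min\{\lambda_0,2\sigma_1(1-\alpha)/(L\gamma_{\max})\}$. This gives $s^{(n)}\to0$ along the entire sequence and removes the case split. In fact, the paper's inequality $\|\nabla f(z^{(n)})-\nabla f(z^{(n)}+\lambda_-^{(n)}s^{(n)})\|_Z>\tfrac{1-\alpha}{\gamma_{\max}}\|s^{(n)}\|_Z$ together with Lipschitz continuity already yields such a bound. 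So your version is the more direct reading of the same mechanism.

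Three small points should be stated explicitly rather than asserted:
\begin{itemize}
\item $\mathfrak{F}^{(n)}\ge F(z^{(n)})$ holds for every $n$, by induction as in the proof of Proposition~\ref{prop:spg-appendix}.
\item When $s^{(n)}=0$ no trial step is rejected, so your division by $\|s^{(n)}\|_Z^2$ is only needed when $s^{(n)}\neq0$.
\item The solution set must be assumed nonempty, as the paper also does tacitly.
\end{itemize}
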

\begin{proof}
  Proposition~\ref{prop:spg-appendix} ensures that if there exists
  $n\in\mathbb{N}$ for which $z^{(n)}=\bar z$ is a solution to
  \eqref{eq:optprob-appendix}, then $z^{(n+j)}=\bar z$ for all
  $j\in\mathbb{N}$ and so $z^{(n)}$ converges to $\bar z$.  Now,
  suppose no such $n$ exists, then $\lambda^{(n)}>0$ for all $n$.
  In this case, the proof of weak convergence requires a generalization of
  \cite[Theorems~3.1~\&~3.2]{bello2016weak} to account for $\phi$ that is
  not necessarily an indicator function and to account for the nonmonotone
  line search.  Let $z\in Z$ be an arbitrary solution to
  \eqref{eq:optprob-appendix},
  $u^{(n)}\coloneqq z^{(n)}-\gamma^{(n)}\mu^{(n)}$ and
  $v^{(n)}\coloneqq \prox_{\gamma^{(n)}\phi}(u^{(n)})$.
  As in the proof of \cite[Lemma~3.1]{bello2016weak}, we have that
  \begin{align*}
  (z^{(n)}-&z^{(n+1)},z^{(n)}-z)_Z \\
     %=&\,-\lambda^{(n)}(s^{(n)}\pm\gamma^{(n)}\mu^{(n)},z^{(n)}-z\pm v^{(n)})_Z \\
     =&\,\lambda^{(n)}\gamma^{(n)}(\mu^{(n)},z^{(n)}-z)_Z
        +\lambda^{(n)}(v^{(n)}-u^{(n)},s^{(n)})_Z
        +\lambda^{(n)}(v^{(n)}-u^{(n)},z-v^{(n)})_Z \\
   \ge&\,\lambda^{(n)}\gamma^{(n)}(\mu^{(n)},z^{(n)}-z)_Z
        +\lambda^{(n)}(v^{(n)}-u^{(n)},s^{(n)})_Z
        +\lambda^{(n)}\gamma^{(n)}(\phi(v^{(n)})-\phi(z)) \\
   \ge&\,\lambda^{(n)}\gamma^{(n)}(f(z^{(n)})-f(z))
        +\lambda^{(n)}\gamma^{(n)}(\phi(v^{(n)})-\phi(z))
        +\lambda^{(n)}(v^{(n)}-u^{(n)},s^{(n)})_Z \\
     =&\,\lambda^{(n)}\gamma^{(n)}(F(z^{(n)})-F(z))
        +\lambda^{(n)}\gamma^{(n)}(\phi(v^{(n)})-\phi(z^{(n)}))
        +\lambda^{(n)}(v^{(n)}-u^{(n)},s^{(n)})_Z \\
   \ge&\,\lambda^{(n)}[\gamma^{(n)}(\phi(v^{(n)})-\phi(z^{(n)}))
      +\|^{(n)}\|_Z^2+\gamma^{(n)}(\mu^{(n)},s^{(n)})_Z] \\
     =&\,\lambda^{(n)}[\|s^{(n)}\|_Z^2 + \gamma^{(n)}\mathcal{Q}^{(n)}].
  \end{align*}
  Here, the first inequality follows from \cite[Lemma~1]{baraldi2022proximal},
  the second follows from the convexity of $f$, and the third from the
  optimality of $z$.  Consequently, we have that
  \begin{align*}
    \|z^{(n+1)}-z\|_Z^2
    &=\|z^{(n)}-z\|_Z^2+\|z^{(n+1)}-z^{(n)}\|_Z^2-2(z^{(n)}-z^{(n+1)},z^{(n)}-z)_Z \\
    %&\le \|z^{(n)}-z\|_Z^2 + \|z^{(n+1)}-z^{(n)}\|_Z^2 - 2\lambda^{(n)}[\|s^{(n)}\|_Z^2+\gamma^{(n)}\mathcal{Q}^{(n)}] \\
    &\le \|z^{(n)}-z\|_Z^2 + (\lambda^{(n)})^2\|s^{(n)}\|_Z^2 - 2\lambda^{(n)}[\|s^{(n)}\|_Z^2+\gamma^{(n)}\mathcal{Q}^{(n)}] \\
    &\le \|z^{(n)}-z\|_Z^2 - \lambda^{(n)}\|s^{(n)}\|_Z^2 - 2\lambda^{(n)}\gamma^{(n)}\mathcal{Q}^{(n)}. 
  \end{align*}
  Here, we have used the fact that
  $(\lambda^{(n)})^2-2\lambda^{(n)}\le-\lambda^{(n)}$ since
  $\lambda^{(n)}\in(0,1)$.  Moreover, \cite[Lemma~2]{baraldi2022proximal}
  ensures that
  \[
    -\mathcal{Q}^{(n)} \ge \frac{1}{\gamma^{(n)}}\|s^{(n)}\|_Z^2
    \quad\implies\quad
    \varepsilon^{(n)}\coloneqq-\lambda^{(n)}\|s^{(n)}\|_Z^2 - 2\lambda^{(n)}\gamma^{(n)}\mathcal{Q}^{(n)}
      \ge \lambda^{(n)}\|s^{(n)}\|_Z^2\ge 0.
  \]
  The construction of $\mathfrak{F}^{(n)}$ yields the bound
  \begin{align}
    \mathfrak{F}^{(n)} &= (1-a^{(n-1)})\mathfrak{F}^{(n-1)}+a^{(n-1)} F(z^{(n)}) \nonumber \\
    &\le (1-a^{(n-1)})\mathfrak{F}^{(n-1)}+a^{(n-1)}(\mathfrak{F}^{(n-1)}+\alpha\lambda^{(n)}\mathcal{Q}^{(n)}) \nonumber \\
    &= \mathfrak{F}^{(n-1)} + a^{(n-1)}\alpha\lambda^{(n)}\mathcal{Q}^{(n)} \label{eq:mon-appendix}
  \end{align}
  for all $n\ge 2$ and so
  \begin{equation}\label{eq:sum-appendix}
    -a_{\min}\alpha\sum_{n=2}^\infty\lambda^{(n)}\mathcal{Q}^{(n)}
    \le\sum_{n=2}^\infty \mathfrak{F}^{(n-1)}-\mathfrak{F}^{(n)}
    \le F(z^{(1)})-F(z) <\infty.
  \end{equation}
  Therefore, we have that
  \[
    0 \le \sum_{n=1}^\infty \varepsilon^{(n)} \le -2\frac{\gamma_{\max}}{\alpha} \sum_{n=1}^\infty \lambda^{(n)}\mathcal{Q}^{(n)} < \infty
  \]
  and $\{z^{(n)}\}$ is quasi-Fej\'{e}r monotone with respect to the set of
  minimizers of \eqref{eq:optprob-appendix}.  Consequently, $\{z^{(n)}\}$ is
  bounded and if every weak sequential cluster point is a minimizer to
  \eqref{eq:optprob-appendix}, then the iterates converge weakly to a
  minimizer \cite[Theorem~5.33]{bauschke2017convex}.

  To this end, \eqref{eq:mon-appendix} and \cite[Lemma~2]{baraldi2022proximal}
  (i.e., $\mathcal{Q}^{(n)}\le-\gamma^{(n)}\|s^{(n)}\|_Z^2\le 0$) imply that
  $\{\mathfrak{F}^{(n)}\}$ is a decreasing sequence that is bounded below and
  hence convergent. This, \eqref{eq:sum-appendix}, and the fact that
  $\gamma^{(n)}\in[\gamma^{\min},\gamma^{\max}]$ yields
  \[
    \lim_{n\to\infty} \lambda^{(n)}\|s^{(n)}\|_Z^2 = 0.
  \]
  Let $\bar z\in Z$ be a weak accumulation point of $\{z^{(n)}\}$ and
  $\mathcal{N}\subseteq\mathbb{N}$ be a subsequence on which
  $z^{(n)}\rightharpoonup\bar z$. We split the result into two cases:
  $\{\lambda^{(n)}\}_{\mathcal{N}}$ does and does not converge to zero.  We
  first consider the case that $\{\lambda^{(n)}\}_{\mathcal{N}}$ does not
  converge to zero and so there exists a subsequence on which
  $\inf_{n\in\mathcal{N}}\lambda^{(n)}>0$. We redefine $\mathcal{N}$ to
  be this subsequence and therefore, on $\mathcal{N}$ we have that
  $s^{(n)}=v^{(n)}-z^{(n)}\to 0$ and the Lipschitz continuity of $\nabla f$
  ensures that $\nabla f(v^{(n)})-\nabla f(z^{(n)})\to 0$.  Now, consider the
  second case. According to lines~9-15 of Algorithm~\ref{alg:spg}, there exists
  $\lambda_-^{(n)}\in[\tfrac{1}{\sigma_2}\lambda^{(n)},\tfrac{1}{\sigma_1}\lambda^{(n)}]$
  such that
  \[
    F(z^{(n)})-F(z^{(n)}+\lambda_-^{(n)}s^{(n)})
    \le \mathfrak{F}^{(n)}-F(z^{(n)}+\lambda_-^{(n)}s^{(n)})
    < -\alpha\lambda_-^{(n)}\mathcal{Q}^{(n)},
  \]
  where the first inequality follows from line~9 and
  the fact that $\mathcal{Q}^{(n)}<0$, i.e.,
  \[
    \mathfrak{F}^{(n)} = (1-a^{(n-1)})\mathfrak{F}^{(n-1)}+a^{(n-1)}F(z^{(n)}) %\ge (1-p)(F(z^{(n)})-\alpha\lambda^{(n)}\mathcal{Q}^{(n)})+pF(z^{(n)})
    \ge F(z^{(n)}) - a^{(n-1)}\alpha\lambda^{(n)}\mathcal{Q}^{(n)}
    \ge F(z^{(n)}).
  \]
  Using convexity of $f$ and $\phi$, we have that
  \[
  \begin{aligned}
    -\alpha\lambda_-^{(n)}\mathcal{Q}^{(n)}
      >&\, \phi(z^{(n)})-\phi(z^{(n)}+\lambda_-^{(n)}s^{(n)})-\lambda_-^{(n)}(\nabla f(z^{(n)}+\lambda_-^{(n)}s^{(n)}),s^{(n)})_Z \\
    \ge&\, -\lambda_-^{(n)}\mathcal{Q}^{(n)} + \lambda_-^{(n)}(\nabla f(z^{(n)})-\nabla f(z^{(n)}+\lambda_-^{(n)}s^{(n)}),s^{(n)})_Z.
  \end{aligned}
  \]
  Consequently, the Cauchy-Schwarz inequality and
  \cite[Lemma~2]{baraldi2022proximal} yield
  \[
    \|\nabla f(z^{(n)})-\nabla f(z^{(n)}+\lambda_-^{(n)}s^{(n)})\|_Z\|s^{(n)}\|_Z > -(1-\alpha)\mathcal{Q}^{(n)}
    \ge \frac{(1-\alpha)}{\gamma^{(n)}}\|s^{(n)}\|_Z^2
  \]
  and so
  \[
    \|\nabla f(z^{(n)})-\nabla f(z^{(n)}+\lambda_-^{(n)}s^{(n)})\|_Z > \frac{(1-\alpha)}{\gamma^{\max}}\|s^{(n)}\|_Z.
  \]
  Notice that
  $z^{(n)}-(z^{(n)}+\lambda_-^{(n)}s^{(n)})=-\lambda_-^{(n)}s^{(n)}$ and thus
  \[
    \|z^{(n)}-(z^{(n)}+\lambda_-^{(n)}s^{(n)})\|_Z^2 = \|\lambda_-^{(n)}s^{(n)}\|_Z^2
      \le \frac{1}{\sigma_1} \lambda^{(n)} \|s^{(n)}\|_Z^2 \to 0.
  \]
  Here, we have used the fact that $0\le \lambda_-^{(n)}\le 1$, which ensures
  $(\lambda_-^{(n)})^2\le\lambda_-^{(n)}$.  Consequently,
  $\nabla f(z^{(n)})-\nabla f(z^{(n)}+\lambda_-^{(n)}s^{(n)})\to 0$
  and $\|s^{(n)}\|_Z\to 0$.

  To summarize, in both cases, there exists a subsequence
  $\mathcal{N}\subseteq\mathbb{N}$ on which $z^{(n)}\rightharpoonup\bar z$ and
  $s^{(n)}\to 0$.  Therefore, $z^{(n)}-v^{(n)}=s^{(n)}\to 0$ and
  $\nabla f(z^{(n)})-\nabla f(v^{(n)})\to 0$ on $\mathcal{N}$.  Now, by
  construction of $u^{(n)}$ and $v^{(n)}$, we have that
  \[
  \begin{aligned}
    \frac{1}{\gamma^{(n)}}&(u^{(n)}-v^{(n)}) \in \partial\phi(v^{(n)}) \\
    &\iff\quad
    -\frac{1}{\gamma^{(n)}}s^{(n)} + (\nabla f(z^{(n)})-\nabla f(v^{(n)})) \in \nabla f(v^{(n)}) + \partial\phi(v^{(n)}) = \partial F(v^{(n)}).
  \end{aligned}
  \]
  Since the left-hand side (strongly) converges to zero and
  $v^{(n)}=z^{(n)}+s^{(n)}\rightharpoonup\bar z$ on $\mathcal{N}$, it follows
  from \cite[Proposition~16.36]{bauschke2017convex} that
  $0\in\partial F(\bar z)$. Therefore, $\bar z$ is a solution to
  \eqref{eq:optprob-appendix} and weak convergence follows from quasi-Fej\'{e}r
  monotonocity \cite[Theorem~5.33]{bauschke2017convex}.
\end{proof}

\end{appendices}

\bibliography{references}

\end{document}